\newtheorem{Theorem}{Theorem}[section]
\newtheorem{Proposition}[Theorem]{Proposition}
\newtheorem{Lemma}[Theorem]{Lemma}
\newtheorem{Corollary}[Theorem]{Corollary}
\theoremstyle{definition}
\newtheorem{Definition}[Theorem]{Definition}
\newtheorem{Remark}[Theorem]{Remark}
\newcommand{\bTheorem}[1]{
\begin{Theorem} \label{T#1} }
\newcommand{\eT}{\end{Theorem}}
\newcommand{\bProposition}[1]{
\begin{Proposition} \label{P#1}}
\newcommand{\eP}{\end{Proposition}}
\newcommand{\bLemma}[1]{
\begin{Lemma} \label{L#1} }
\newcommand{\eL}{\end{Lemma}}
\newcommand{\bCorollary}[1]{
\begin{Corollary} \label{C#1} }
\newcommand{\eC}{\end{Corollary}}
\newcommand{\bRemark}[1]{
\begin{Remark} \label{R#1} }
\newcommand{\eR}{\end{Remark}}
\newcommand{\bDefinition}[1]{
\begin{Definition} \label{D#1} }
\newcommand{\eD}{\end{Definition}}
\newcommand{\dif}{\mathrm{d}}
\newcommand{\mf}{\mathfrak{F}}
\newcommand{\mr}{\mathbb{R}}
\newcommand{\prst}{\mathbb{P}}
\newcommand{\p}{\mathbb{P}}
\newcommand{\T}{\mathbb{T}}
\newcommand{\stred}{\mathbb{E}}
\newcommand{\mn}{\mathbb{N}}
\newcommand{\mt}{\mathbb{T}^3}
\newcommand{\bu}{\mathbf u}
\newcommand{\tor}{\mathbb{T}^3}
\newcommand{\StoB}{\left(\Omega, \mathfrak{F},(\mathfrak{F}_t )_{t \geq 0},  \mathbb{P}\right)}
\newcommand{\bfu}{\mathbf{u}}
\newcommand{\bfr}{\mathbf{r}}
\newcommand{\bFormula}[1]{
\begin{equation} \label{#1}}
\newcommand{\eF}{\end{equation}}
\newcommand{\vr}{\varrho}
\newcommand{\vre}{\vr_\ep}
\newcommand{\vue}{\vu_\ep}
\newcommand{\vu}{\vc{u}}
\newcommand{\vc}[1]{{\bf #1}}
\newcommand{\Div}{{\rm div}_x}
\newcommand{\Grad}{\nabla_x}
\newcommand{\tn}[1]{\mathbb{#1}}
\newcommand{\dx}{\,{\rm d} {x}}
\newcommand{\dt}{\,{\rm d} t }
\newcommand{\vm}{\vc{m}}
\newcommand{\D}{{\rm d}}
\newcommand{\ep}{\varepsilon}
\newcommand{\R}{\mathbb{R}}
\newcommand{\E}{\mathbb{E}}
\newcommand{\expe}[1]{ \mathbb{E} \left[ #1 \right] }
\newcommand{\intON}[1]{\int_{\mathbb{T}^3} #1 \dx }
\definecolor{Cgrey}{rgb}{0.85,0.85,0.85}
\definecolor{Cblue}{rgb}{0.50,0.85,0.85}
\definecolor{Cred}{rgb}{1,0,0}
\definecolor{fancy}{rgb}{0.10,0.85,0.10}
\newcommand{\Dif}{{\rm d}}
\newcommand{\DD}{D^d_t}
\newcommand{\DS}{\mathbb{D}^s_t}
\newcommand{\N}{\mathbb N}
\newcommand{\intTor}[1]{\int_{\tor} #1 \,dx}
\newcommand\Cbox[2]{%
    \newbox\contentbox%
    \newbox\bkgdbox%
    \setbox\contentbox\hbox to \hsize{%
        \vtop{
            \kern\columnsep
            \hbox to \hsize{%
                \kern\columnsep%
                \advance\hsize by -2\columnsep%
                \setlength{\textwidth}{\hsize}%
                \vbox{
                    \parskip=\baselineskip
                    \parindent=0bp
                    #2
                }%
                \kern\columnsep%
            }%
            \kern\columnsep%
        }%
    }%
    \setbox\bkgdbox\vbox{
        \color{#1}
        \hrule width  \wd\contentbox %
               height \ht\contentbox %
               depth  \dp\contentbox
        \color{black}
    }%
    \wd\bkgdbox=0bp%
    \vbox{\hbox to \hsize{\box\bkgdbox\box\contentbox}}%
    \vskip\baselineskip%
}
\DeclareMathOperator*{\esssup}{ess\,sup}
\DeclareMathOperator{\divv}{div}
\DeclareMathOperator{\diver}{div}
\date{}
\begin{document}

%%%%%%%%%%%%%%%%%%%%%%%%%%%%%%%%

\title{Measure-valued solutions to the stochastic compressible Euler equations and incompressible limits}

\author{
Martina Hofmanov\'a \footnotemark[1] \and 
Ujjwal Koley \footnotemark[2]
%\thanks{The research of E.F.~leading to these results has received funding from the
%European Research Council under the European Union's Seventh
%Framework Programme (FP7/2007-2013)/ ERC Grant Agreement
%320078. The Institute of Mathematics of the Academy of Sciences of
%the Czech Republic is supported by RVO:67985840.}
\and Utsab Sarkar \footnotemark[3]
} 

\date{\today}

\maketitle

\centerline{$^*$ Fakult\"{a}t f\"{u}r Mathematik, Universit\"{a}t Bielefeld}
\centerline{D-33501 Bielefeld, Germany}
\centerline{hofmanova@math.uni-bielefeld.de}

\medskip
\centerline{$^\dagger$ Centre for Applicable Mathematics, Tata Institute of Fundamental Research}
\centerline{P.O. Box 6503, GKVK Post Office, Bangalore 560065, India}
\centerline{ujjwal@math.tifrbng.res.in}

\medskip
\centerline{$^\ddagger$ Centre for Applicable Mathematics, Tata Institute of Fundamental Research}
\centerline{P.O. Box 6503, GKVK Post Office, Bangalore 560065, India}\centerline{utsab@tifrbng.res.in}

\begin{abstract}
We introduce a new concept of dissipative measure-valued martingale solutions to the stochastic compressible Euler equations. These solutions are weak in the probabilistic sense i.e., the probability space and the driving Wiener process are an integral part of the solution. We derive the relative energy inequality for the stochastic compressible Euler equations and, as a corollary, we exhibit pathwise weak-strong uniqueness principle. Moreover, making use of the relative energy inequality, we investigate the low Mach limit (incompressible limit) of underlying system of equations. As a main novelty with respect to the related literature, our results apply to general nonlinear multiplicative stochastic perturbations of Nemytskij type.

\end{abstract}

{\bf Keywords:} Euler system; Navier--Stokes system; Compressible fluids; Stochastic forcing; Measure-valued solution; Dissipative solution; Weak-strong uniqueness; Low Mach  limit.

\tableofcontents

\section{Introduction}
\label{P}
The concept of measure-valued solutions to deterministic partial differential equations (PDEs) was first introduced by DiPerna (\cite{DL}) in the context of conservation laws. 
The measure-valued solutions for compressible fluid dynamics equations has been introduced by Neustupa in \cite{Neus}, and subsequently revisited by Feireisl et. al. in \cite{Fei01}, where the authors have introduced the concept of so-called \emph{dissipative} measure-valued solutions. 
In this paper, we introduce a notion of dissipative measure-valued  solution for the stochastically forced system of the \emph{compressible barotropic
Euler system} describing the time evolution of the mass density $\vr$ and the bulk velocity $\vu$ of a fluid. The system of equations reads
\begin{align} \label{P1}
\D \vr + \Div (\vr \vu) \dt &= 0,\\ \label{P2}
\D (\vr \vu) + \left[ \Div (\vr \vu \otimes \vu)+  \Grad p(\vr) \right]  \dt  &=  \mathbb{G} (\vr, \vr \vu) \,\D W.
\end{align}
Here $p(\vr):=\vr^\gamma$ is the pressure; $\gamma>1$ denotes the adiabatic exponent, and to avoid the well-known difficulties related to the behaviour of fluid flows near the boundary, we study \eqref{P1}--\eqref{P2} on the $3$-dimensional torus $\mathbb{T}^3$.
The driving stochastic process $W$ is a cylindrical Wiener process defined on some probability space $(\Omega,\mathfrak{F},\p)$ and the coefficient $\mathbb{G}$ is generally nonlinear and satisfies suitable growth assumptions (see Section \ref{E} for the complete list of assumptions). The problem is closed by prescribing the random \emph{initial} data
\begin{equation} \label{P4}
\vr(0,\cdot) = \vr_0, \ \vr \vu(0, \cdot) = (\vr \vu)_0,
\end{equation}
with sufficient spatial regularity specified later. 

It is well-known that  smooth solutions to deterministic counterpart of \eqref{P1}--\eqref{P2} exist only for a finite lap of time after which singularities may develop for a generic class of initial data, and weak solutions must be sought. Moreover, weak solutions may not be uniquely determined by their initial data and an admissibility condition must be imposed to single out the physically correct solution. However, in light of the seminal work by De Lellis $\&$ Sz\'ekelyhidi \cite{DelSze3,DelSze}, and further confirmed by Chiodaroli et. al. \cite{chio01}, it turns out that the deterministic compressible Euler system is desperately ill-posed. Even if the initial data is smooth, the global existence and uniqueness of solutions can fail. Moreover, the question of existence of global-in-time weak solutions to \eqref{P1}--\eqref{P2} for \emph{general} initial data remains open. In the stochastic set-up, existence of a martingale solution for \eqref{P1}--\eqref{P2} was established in \cite{BV13} in \emph{one} space-dimension. A stochastic variant of deterministic results, as depicted in \cite{DelSze3,Feireisl002}, for the stochastic compressible Euler system have been achieved recently by Breit et. al. in \cite{BrFeHo2017}. In fact, it was shown in \cite{BrFeHo2017} that the same difficulty (ill-posedness) persists even in the presense of a random forcing. Incidentally, the existence of global strong solutions for \eqref{P1}--\eqref{P2} (i.e. the stopping time $\mathfrak t$ in Definition \ref{def:strsol} reaches $T$) is not expected.

In view of the above discussion, our goal is to present a suitable concept of \emph{dissipative} measure-valued solutions to \eqref{P1}--\eqref{P2} and discuss their basic properties. Broadly speaking, they are
measure-valued solutions of the stochastic compressible Euler system \eqref{P1}--\eqref{P2} satisfying an appropriate form of energy inequality and exist globally in time for any finite energy initial data.
The reason we consider measure-valued solutions is twofold: First, we see measure-valued solutions as possibly the largest class in which family of smooth (classical) solutions is stable. This is an undemanding consequence of weak (measure-valued)-strong uniqueness principle; second, solutions to an expansive class of approximate problems - emanating from numerical schemes - can be shown to converge to a measure-valued solution whereas convergence to a weak solution is either not known or requires strenuous efforts to prove. Measure-valued solution for fluid flows driven by \emph{multiplicative} noise is a relatively new area of pursuit, and to the best of our knowledge, this is the first attempt to characterize the concept of measure-valued solution for such equations. Having said this, we mention that for incompressible Euler equations driven by \emph{additive} noise, the existence of measure valued solution was first established by Kim in \cite{Kim2}. He has shown that the existence can be obtained from a sequence of solutions of stochastic Navier Stokes equations using tools from martingale selection. Recently, a concept of dissipative solutions to incompressible Euler equations driven by \emph{additive} noise has been introduced by Breit $\&$ Moyo in \cite{BrMo}.

To motivate our definition of measure-valued solutions, we need a sequence of approximate solutions to \eqref{P1}--\eqref{P2} together with an appropriate energy inequality. To give a concrete example, we consider an approximation by viscous compressible fluids subject to stochastic forcing described by the Navier--Stokes system
\begin{align} \label{P1NS}
\D \vr_{\varepsilon} + \Div (\vr_{\varepsilon} \vu_{\varepsilon}) \dt &= 0,\\
 \label{P2NS}
\D (\vr_{\varepsilon} \vu_{\varepsilon}) + \left[ \Div (\vr_{\varepsilon} \vu_{\varepsilon} \otimes \vu_{\varepsilon}) +  \Grad p(\vr_{\varepsilon}) \right] \dt  &= {\varepsilon}\,\Div \mathbb{S} (\Grad \vu_{\varepsilon}) \,dt + \mathbb{G} (\vr_{\varepsilon}, \vr_{\varepsilon} \vu_{\varepsilon}) \,\D W.
\end{align}
Here
$\mathbb{S}(\Grad \vu_{\varepsilon})$ is the viscous stress tensor for which we assume Newton's rheological law
\begin{equation*} 
\mathbb{S}(\Grad \vu_{\varepsilon}) = \delta \left( \Grad \vu_{\varepsilon} + \Grad^t \vu_{\varepsilon} - \frac{2}{3} \Div \vu_{\varepsilon} \mathbb{I} \right) + \lambda \,\Div \vu_{\varepsilon} \mathbb{I}, \qquad \delta > 0, \ \lambda \geq 0.
\end{equation*}
It is also well-known that smooth solutions to \eqref{P1NS}--\eqref{P2NS} satisfy the following energy inequality
\begin{align} \label{en_01}
\begin{aligned}
& \int_{\T^3} \bigg[ \frac{1}{2} \varrho_{\varepsilon} | {\bf u}_{\varepsilon} |^2  + \frac{\rho_{\varepsilon}^\gamma}{\gamma-1} \bigg] \dx 
+ {\varepsilon} \int_0^t \int_{\T^3}  \mathbb{S} (\nabla_x  {\bf u}_{\varepsilon}): \nabla_x  {\bf u}_{\varepsilon} \dx\,  {\rm d}s 
\leq \int_{\T^3} \bigg[ \frac{1}{2}\frac{|\varrho_{\varepsilon}\bfu_{\varepsilon}(0)|^2}{\varrho_{\varepsilon}(0)}  + \frac{\varrho_{\varepsilon}^\gamma(0)}{\gamma-1}  \bigg] \dx \\
& \qquad +\sum_{k=1}^\infty\int_0^t\bigg(\int_{\T^3}\mathbf{G}_k (\varrho_{\varepsilon}, \varrho_{\varepsilon} {\bf u}_{\varepsilon})\cdot{\bf u}_{\varepsilon}\dx\bigg){\rm d} W_k
+ \frac{1}{2}\sum_{k = 1}^{\infty}  \int_0^t
\int_{\T^3} \varrho_{\varepsilon}^{-1}| \mathbf{G}_k (\varrho_{\varepsilon}, \varrho_{\varepsilon} {\bf u}_{\varepsilon}) |^2 \, {\rm d}s
\end{aligned}
\end{align}

The study of the  system \eqref{P1NS}--\eqref{P2NS} was first initiated by Breit $\&$ Hofmanova in \cite{BrHo}, where the global-in-time existence of \emph{finite energy weak martingale solutions} is shown. These solutions are weak in the analytical sense (derivatives only exists in the sense of distributions) and weak in the probabilistic sense (the probability space is an integral part of the solution) as well. Moreover, a relative energy inequality, based on the pioneering work of Dafermos (\cite{Daf4}), has been recently established in Breit et. al. \cite{BrFeHo2015A}. As a corollary, they also demonstrated pathwise (and in law) weak-strong uniqueness property for the system \eqref{P1NS}--\eqref{P2NS}. The main deterrence against plausible extension of the deterministic argument, in deriving relative energy inequality \cite[Section 3]{BrFeHo2015A}, essentially lies in successful seizure of ``noise-noise'' interaction term (a perfect square term) between a smooth given strong solution and a martingale solution of \eqref{P1NS}--\eqref{P2NS}. Note that, in the realm of stochastic scalar conservation laws (refer to \cite{BhKoleyVa, BhKoleyVa_01, BisKoleyMaj, Koley1, Koley2}), ``noise-noise'' interaction term plays a pivotal role in the ``doubling of variables'' argument, \`{a} la Kru\v{z}kov. Furthermore, we mention that weak-strong property was used in \cite{BrFeHo2015A} to show convergence of weak solutions in the inviscid-incompressible limit.

In this paper, our interest is directed to the weak (measure-valued)--strong uniqueness principle for dissipative measure-valued solution. In what follows, we first introduce a notion of dissipative measure-valued solution for stochastic compressible Euler equations, keeping in mind that this framework will enable us to exhibit weak-strong uniquness property. Roughly speaking, the major stumbling block to emulate the ideas from \cite{BrFeHo2015A} lies in the successful identification (in the limit) of the martingale terms present in momentum equations \eqref{P2NS} and energy inequality \eqref{en_01}.
In general, due to lack of regular estimates for approximate solutions, it seems not possible to identify, in the limit, the martingale term in the momentum equation. In particular, it is neither possible to borrow results from Debussche et. al. \cite{Debussche} nor to use strong/weak continuity property of linear operators (like It\^o's integral) between two Banach spaces, to pass to the limit in the martingale terms. We can only conclude that the limit object is a martinagle. On the other hand, a closer look at the proof of weak-strong uniqueness in \cite{BrFeHo2015A} reveals that it is enough to have knowledge about the cross variation of a martingale solution with a strong solution. We are able to successfully leverage this observation further to establish the weak (measure-valued)--strong principle for \eqref{P1}--\eqref{P2}. Indeed, as a conceivable remedy, this is done by stipulating the correct cross variation with a smooth process in the definition of measure-valued solution (refer to Definition~\ref{def:dissMartin}).

Finally, to display the effectiveness of the relative energy inequality, we apply the same to find out the incompressible limit of stochastic compressible flows. Note that it is arguably rather difficult to show directly that the solutions of compressible Euler equations converge to the solutions of incompressible Euler equations in the low Mach number regime. But it is much easier, as it shown in Section~\ref{singular}, to show that the compressible Euler solutions converge to some strong kind of solutions to the incompressible Euler equations, thanks to weak-strong uniqueness property. 
In the realm of low Mach limit, there is a large literature related to incompressible limit for the deterministic counterpart of \eqref{P1}--\eqref{P2}, being the first important contribution by Kleinermann and Majda \cite{KM1}. For the so called \emph{well-prepared} data, justification of incompressible limit has been made rigorous by many authors (\cite{Asano1987, SCHO2, MESC1, Uka}). All these authors assume that the solutions of the compressible flow are smooth. However, as is well known, solutions of the compressible Euler system develop singularities in a finite time no matter how smooth and/or small the initial data are. The hypothesis of smoothness of solutions is therefore quite restrictive and even not appropriate in the context of compressible inviscid fluids. In light of the above discussions, to find out the incompressible limit, we follow the strategy of Feireisl et. al. \cite{Fei02} based on measure-valued solution. We also mention that this concept of dissipative measure valued solution and weak (measure-valued)--strong uniqueness principle are used recently in the analysis of convergence of certain numerical schemes \cite{K1}. Furthermore, a suitable concept of dissipative measure valued solution has been introduced for stochastic equations of incompressible fluid flow in \cite{K2}.

The rest of the paper is organized as follows:  we describe all mathematical/technical framework and state the main results in Section~\ref{E}. In Section~\ref{proof1}, we present a proof of existence of dissipative measure-valued martingale solutions using stochastic compactness. Section~\ref{proof2} is devoted on deriving the weak (measure-valued) -- strong uniqueness principle by making use of a suitable relative energy inequality. Finally, in Section~\ref{singular}, we explore another application of weak (measure-valued)--strong uniqueness property by justifying rigorously low Mach limit of the stochastic compressible Euler system \eqref{P1}--\eqref{P2}.

\section{Mathematical Framework and Main Results} 
\label{E}

In this section we present relevant mathematical tools to be used in the subsequent analysis and state main results of this paper. To begin, we fix an arbitrary large time horizon $T>0$. Throughout this paper, we use the letter $C$ to denote various generic constants. There are situations where constant may change from line to line, but the notation is kept unchanged so long as it does not impact the central idea. For a generic set $E$, let $\mathcal{M}_b(E)$ denotes the space of bounded Borel measures on $E$ whose norm is given by the total variation of measures. It is the dual space to the space of continuous functions vanishing at infinity $C_0(E)$ equipped with the supremum norm. Moreover, let $\mathcal{P}(E)$ be the space of probability measures on $E$.

\subsection{Analytic framework}
Let $H$ be a separable Hilbert space. Given $\alpha \in (0,1)$, let $W^{\alpha,2}(0,T;H)$ denotes a $H$-valued Sobolev space which is characterized by its norm
$$
\| u\|^2_{W^{\alpha,2}(0,T;H)}:= \int_0^T \| u(t)\|^2_{H}\,dt + \int_0^T \int_0^T \frac{\| u(t)-u(s)\|^2_H}{|t-s|^{1+2\alpha}}\,dt\,ds.
$$
We recall the following compact embedding result from Flandoli $\&$ Gatarek \cite[Theorem 2.2]{FlandoliGatarek}
\begin{Lemma}
\label{comp}
If $A \subset B$ are two Banach spaces with compact embedding, and real numbers $\alpha \in(0,1)$ satisfy $\alpha >1/2$, then the embedding
$$
W^{\alpha,2}(0,T;A) \subset C([0,T]; B)
$$
is compact.
\end{Lemma}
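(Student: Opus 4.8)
The plan is to reduce the statement to the classical Arzel\`a--Ascoli theorem for functions valued in a metric space, the extra ingredient being a one-dimensional fractional Sobolev embedding that upgrades the double-integral seminorm in $W^{\alpha,2}(0,T;A)$ to a H\"older modulus of continuity when $\alpha>1/2$. Concretely, fix a sequence $(u_n)_n$ with $\|u_n\|_{W^{\alpha,2}(0,T;A)}\le R$; since $C([0,T];B)$ is metrizable it suffices to extract a subsequence converging in this space.

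\emph{Step 1: uniform H\"older bound in $A$.} Because $\alpha>1/2$, a Garsia--Rodemich--Rumsey type inequality applied to the $A$-valued map $u_n$ yields, after passing to the continuous representative, a constant $C=C(\alpha,T)$ such that
$$
\|u_n(t)-u_n(s)\|_A\le C\,|t-s|^{\alpha-\frac12}\left(\int_0^T\int_0^T\frac{\|u_n(\tau)-u_n(\sigma)\|_A^2}{|\tau-\sigma|^{1+2\alpha}}\,d\tau\,d\sigma\right)^{1/2}\le CR
$$
for all $s,t\in[0,T]$. Integrating this inequality in $s$ over $[0,T]$ and combining with the bound on $\int_0^T\|u_n(t)\|_A^2\,dt$ also gives $\sup_{t\in[0,T]}\|u_n(t)\|_A\le CR$. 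Hence the family $\{u_n\}$ is uniformly bounded and uniformly equicontinuous as maps $[0,T]\to A$; since the embedding $A\hookrightarrow B$ is (in particular) continuous, the same is true as maps $[0,T]\to B$. In passing this already establishes the continuous embedding $W^{\alpha,2}(0,T;A)\subset C([0,T];A)$.

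\emph{Step 2: Arzel\`a--Ascoli with the compact embedding.} For each fixed $t$ the set $\{u_n(t):n\in\mathbb{N}\}$ is bounded in $A$, hence relatively compact in $B$ by the compactness of $A\hookrightarrow B$. Together with the uniform equicontinuity in $B$ from Step 1, the Arzel\`a--Ascoli theorem for maps into the complete metric space $B$ shows that $\{u_n:n\in\mathbb{N}\}$ is relatively compact in $C([0,T];B)$, so a subsequence converges there. Since $R$ and the sequence were arbitrary, the embedding $W^{\alpha,2}(0,T;A)\subset C([0,T];B)$ is compact.

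The only genuinely delicate step is the fractional Sobolev/Garsia--Rodemich--Rumsey estimate in Step 1 that turns the finiteness of the Gagliardo-type double integral into a pointwise H\"older bound with exponent $\alpha-\tfrac12$; this is exactly where the hypothesis $\alpha>\tfrac12$ enters, and it is the reason the argument fails at the endpoint. Everything else --- the passage from boundedness in $A$ to relative compactness in $B$ and the metric-space Arzel\`a--Ascoli argument --- is routine. (Alternatively one may simply invoke \cite[Theorem 2.2]{FlandoliGatarek} verbatim, but the sketch above is the self-contained route.)
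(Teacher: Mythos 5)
Your argument is correct. Note, however, that the paper does not prove this lemma at all: it is quoted verbatim from Flandoli \& Gatarek \cite[Theorem 2.2]{FlandoliGatarek}, so there is no internal proof to compare against; your sketch is essentially the standard proof of that cited result. The two ingredients you isolate are exactly the right ones: the one-dimensional fractional Sobolev (Garsia--Rodemich--Rumsey) estimate, which for $\alpha>1/2$ turns the Gagliardo double integral into a uniform $C^{0,\alpha-1/2}$ bound with values in $A$ (and, combined with the $L^2(0,T;A)$ bound, a uniform sup bound), and the vector-valued Arzel\`a--Ascoli theorem, where pointwise relative compactness in $B$ comes from the compact embedding $A\hookrightarrow B$ and equicontinuity in $B$ from the H\"older bound in $A$. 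The only point worth phrasing carefully is the one you already flag: elements of $W^{\alpha,2}(0,T;A)$ are a priori only equivalence classes, so one must first justify the existence of the continuous representative (e.g.\ by proving the GRR bound for smooth approximations and using density, or by a Lebesgue-point argument); this is routine and does not affect the conclusion. The hypothesis $\alpha>1/2$ enters precisely where you say it does, since the GRR integral produces the exponent $\alpha-\tfrac12$ and diverges at the endpoint.
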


\subsubsection{Young measures, concentration defect measures}
\label{ym}
We shall briefly recapitulate the notion of Young measures and related results which have been used frequently in the text. For a nice overview of applications of
the Young measure theory to hyperbolic conservation laws, we refer to Balder \cite{Balder}. In what follows, let us first assume that $(X, \mathcal{M}, \mu)$ is a sigma finite measure space. A Young measure from $X$ into $\R^M$ is a weakly measurable function $\mathcal{V}: X \rightarrow \mathcal{P}(\R^M)$ in the sense that $x \mapsto \mathcal{V}_{x}(B)$ is $\mathcal{M}$-measurable for every Borel set $B$ in $\R^M$. In what follows, we make use of the following generalization of the classical result on Young measures, see \cite[Section~2.8]{BrFeHobook}.
\begin{Lemma}\label{lem001}
Let $N,M\in\mathbb{N}$, $\mathcal{O} \subset \R^N\times (0,T) $ and let $({\bf U}_n)_{n \in \N}$, ${\bf U}_n: \Omega\times\mathcal{O}  \to \R^M$,  be a sequence of random variables such that
$$
\E \big[ \|{\bf U}_n \|^p_{L^p(\mathcal{O})}\big] \le C, \,\, \text{for a certain}\,\, p\in(1,\infty).
$$
Then there exists a new subsequence $(\tilde {\bf U}_n)_{n \in \N}$ (not relabeled), defined on the standard probability space $\big([0,1], \overline{\mathcal{B}[0,1]}, \mathcal{L} \big)$, and a parametrized family ${\lbrace \mathcal{\tilde V}^{\omega}_{x} \rbrace}_{x \in \mathcal{O}}$ (superscript $\omega$ emphasises the dependence on $\omega$) of random probability measures on $\R^M$, regarded as a random variable taking values in $\big(L_{w^*}^{\infty}(\mathcal{O}; \mathcal{P}(\R^M)), w^* \big)$, such that ${\bf U}_n$ has the same law as $\tilde {\bf U}_n$, i.e.
$
{\bf U}_n \sim_{d} \tilde {\bf U}_n,
$
and  the following property holds: for any Carath\'eodory function $G=G(x, Z), x \in \mathcal{O}, Z \in \R^M$, such that
$$
|G(x,Z)| \le C(1 + |Z|^q), \quad 1 \le q < p, \,\,\text{uniformly in}\,\,x,
$$
implies $\mathcal{L}$-a.s.,
$$
G(\cdot, \tilde {\bf U}_n) \rightharpoonup \overline{G}\,\,\text{in}\,\, L^{p/q}(\mathcal{O}), 
\,\, \text{where}\,\, \overline{G}(x) = \langle \mathcal{\tilde V}^{\omega}_{(\cdot)}; G(x,\cdot)\rangle := \int_{\R^M} G(x, z)\,\D \mathcal{\tilde V}^{\omega}_{x}(z), \,\,\text{for a.a.}\,\, x \in \mathcal{O}.
$$
\end{Lemma}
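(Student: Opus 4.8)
The plan is to deduce the statement from the classical (deterministic) fundamental theorem of Young measures once the whole sequence, together with one piece of auxiliary information, has been transported onto the Skorokhod space $[0,1]$. To this end, represent each $\mathbf{U}_n$ by its associated Dirac Young measure $\Phi_n(\omega):=\delta_{\mathbf{U}_n(\omega,\cdot)}$, a random variable with values in $Y:=\big(L^\infty_{w^*}(\mathcal{O};\mathcal{P}(\R^M)),w^*\big)$. Although $Y$ is not metrizable in general, the pairings $\mathcal{V}\mapsto\langle\mathcal{V};h\rangle$ with $h$ in a countable dense subset of the separable Banach space $L^1(\mathcal{O};C_0(\R^M))$ form a countable family of continuous functions separating its points, so the Jakubowski--Skorokhod representation theorem is available on $Y$. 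Tightness of the laws of $\Phi_n$ follows from the $L^p$-bound: the moment sublevel sets $K_R:=\{\mathcal{V}:\int_{\mathcal{O}}\int_{\R^M}|z|^p\,\mathrm{d}\mathcal{V}_x(z)\,\mathrm{d}x\le R\}$ are compact in $Y$ — closed by lower semicontinuity of the $p$-th moment functional and free of escape of mass to infinity precisely because of the moment bound, which is also what guarantees their elements are genuine (probability-measure-valued) Young measures — while $\p(\Phi_n\notin K_R)\le C/R$ by Chebyshev's inequality.

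To retain, after the change of space, the integrability needed to run the deterministic step pointwise in $\omega$, it is convenient to Skorokhod-represent not $\Phi_n$ alone but the pair $\big(\Phi_n,\|\mathbf{U}_n\|_{L^p(\mathcal{O})}^p\big)$ with values in $Y\times[0,\infty]$, $[0,\infty]$ being the one-point compactification; the second marginals are automatically tight. Prokhorov's theorem and the Jakubowski--Skorokhod theorem then furnish a subsequence (not relabeled) and copies $\big(\tilde\Phi_n,\tilde a_n\big)$, defined on $\big([0,1],\overline{\mathcal{B}[0,1]},\mathcal{L}\big)$ and equal in law to $\big(\Phi_n,\|\mathbf{U}_n\|_{L^p}^p\big)$, converging $\mathcal{L}$-a.s. to some limit $\big(\tilde{\mathcal{V}}^\omega,\tilde a\big)$. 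Since $u\mapsto\delta_u$ is a continuous injection of the Polish space $L^p(\mathcal{O})$ into $Y$ and is therefore a Borel isomorphism onto its (Borel) image (Lusin--Souslin), and since the law of $\big(\Phi_n,\|\mathbf{U}_n\|_{L^p}^p\big)$ is carried by the graph $\{(\delta_u,\|u\|_{L^p}^p):u\in L^p(\mathcal{O})\}$, one may write $\tilde\Phi_n=\delta_{\tilde{\mathbf{U}}_n(\omega,\cdot)}$ with $\tilde{\mathbf{U}}_n:[0,1]\to L^p(\mathcal{O})$ Borel measurable, $\tilde{\mathbf{U}}_n\overset{d}{=}\mathbf{U}_n$, and $\tilde a_n=\|\tilde{\mathbf{U}}_n\|_{L^p}^p$. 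Fatou's lemma together with $\sup_n\E\|\mathbf{U}_n\|_{L^p}^p\le C$ gives $\tilde a<\infty$ $\mathcal{L}$-a.s., hence $R(\omega):=\sup_n\tilde a_n(\omega)<\infty$ for almost every $\omega$; in particular $\tilde\Phi_n(\omega)\in K_{R(\omega)}$ for all $n$ and, by closedness, $\tilde{\mathcal{V}}^\omega\in K_{R(\omega)}$, so $\tilde{\mathcal{V}}^\omega$ is $\mathcal{L}$-a.s. a genuine random Young measure, measurable in $\omega$ as an a.s.\ limit of measurable maps.

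It remains to fix a Carath\'eodory $G$ with $|G(x,Z)|\le C(1+|Z|^q)$, $1\le q<p$, and to convert, for almost every $\omega$, the weak-$*$ convergence $\delta_{\tilde{\mathbf{U}}_n(\omega,\cdot)}\to\tilde{\mathcal{V}}^\omega$ in $Y$ into the asserted weak convergence $G(\cdot,\tilde{\mathbf{U}}_n)\rightharpoonup\overline{G}$ in $L^{p/q}(\mathcal{O})$ with $\overline{G}(x)=\langle\tilde{\mathcal{V}}^\omega_x;G(x,\cdot)\rangle$. This is exactly the deterministic fundamental theorem of Young measures (Balder \cite{Balder}; \cite[Section~2.8]{BrFeHobook}): for almost every $\omega$ the growth bound and $\sup_n\|\tilde{\mathbf{U}}_n(\omega)\|_{L^p}<\infty$ make $\{G(\cdot,\tilde{\mathbf{U}}_n(\omega,\cdot))\}_n$ bounded in the reflexive space $L^{p/q}(\mathcal{O})$, hence uniformly integrable, while Jensen's inequality and $\langle\tilde{\mathcal{V}}^\omega;|z|^p\rangle\le R(\omega)$ put $\overline{G}(\omega,\cdot)$ in $L^{p/q}(\mathcal{O})$; testing against products $\psi(x)g(z)$ with $\psi\in L^{(p/q)'}(\mathcal{O})$ and $g\in C_c(\R^M)$ (which lie in $L^1(\mathcal{O};C_0(\R^M))$), truncating $G$ in the $z$-variable, absorbing the resulting tail by uniform integrability, and invoking $L^{p/q}$-boundedness together with density then identifies the weak limit as $\overline{G}$.

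The main obstacle is the middle step: transferring everything to the common probability space $[0,1]$ while keeping enough of the $L^p$ estimate to invoke the $\omega$-wise Young measure theorem — a uniform bound merely ``in expectation'' does not by itself yield the a.s.\ bound $\sup_n\|\tilde{\mathbf{U}}_n(\omega)\|_{L^p}<\infty$ that is needed — which is what the auxiliary compactified coordinate $\|\mathbf{U}_n\|_{L^p}^p$ is designed to repair via Fatou after the representation. A secondary point requiring care is the topological set-up around $Y$: that it admits a countable separating family of continuous functions (so that Jakubowski--Skorokhod applies), that the moment sublevel sets $K_R$ are compact with no leakage of mass to infinity, and that $u\mapsto\delta_u$ is a Borel isomorphism onto its image, so that $\tilde{\mathbf{U}}_n$ and the limiting random Young measure $\tilde{\mathcal{V}}^\omega$ are genuinely well defined, measurable, and of the required type.
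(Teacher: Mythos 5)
The paper does not actually prove this lemma: it is quoted as a known generalization of the fundamental theorem of Young measures and referred to \cite[Section~2.8]{BrFeHobook} (with \cite{Balder} for background), so there is no in-paper argument to compare line by line. Your proposal is a correct, self-contained reconstruction along exactly the lines of that cited source: encode $\mathbf{U}_n$ as the Dirac Young measure $\delta_{\mathbf{U}_n}$ in the quasi-Polish space $\big(L^\infty_{w^*}(\mathcal{O};\mathcal{P}(\R^M)),w^*\big)$, get tightness from the $p$-moment sublevel sets (whose compactness, including the absence of mass loss at infinity, is precisely the role of the moment bound), apply Jakubowski--Skorokhod, and then run the deterministic Young-measure/truncation argument $\omega$-wise. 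The two points where your write-up adds genuine value are handled correctly: the Lusin--Souslin identification of $\tilde\Phi_n$ as $\delta_{\tilde{\mathbf{U}}_n}$ with $\tilde{\mathbf{U}}_n\overset{d}{=}\mathbf{U}_n$ (via the embedding of the quasi-Polish space into $\R^{\mathbb{N}}$ through the countable separating family), and the augmentation by the compactified coordinate $\|\mathbf{U}_n\|^p_{L^p}$, which is a clean way to convert the bound in expectation into the $\mathcal{L}$-a.s.\ bound $\sup_n\|\tilde{\mathbf{U}}_n(\omega)\|_{L^p}<\infty$ needed for the pointwise-in-$\omega$ deterministic step; without some such device that step would indeed be unjustified. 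Two minor points you use implicitly and should state: $\mathcal{O}$ is taken of finite Lebesgue measure (as in all applications here, $\mathcal{O}\subset\R^N\times(0,T)$ bounded), which is what makes bounded $\psi$ dense in $L^{(p/q)'}(\mathcal{O})$ and makes the ``no escape of mass'' test functions admissible; and $\mathbf{U}_n$ is regarded as an $L^p(\mathcal{O})$-valued random variable (joint measurability plus a.s.\ finite norm), which is what gives meaning to the equality of laws. With these remarks the argument is complete and matches the standard proof in the cited reference; your final exceptional null set is independent of $G$, which is even slightly stronger than the statement requires.
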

\noindent It is well-known that Young measure theory works well when we try to extract limits of bounded continuous functions. Next, we examine what happens for those functions $H$ for which we only know that 
\begin{equation*}
\E \big[ \|H({\bf U}_n)\|^p_{L^1(\mathcal{O})}\big] \le C, \,\, \text{for a certain}\,\, p\in(1,\infty), \, \mbox{uniformly in } n.
\end{equation*}
In fact, using the fact that $L^1(\mathcal{O})$ is embedded in the space of bounded Radon measures $\mathcal{M}_b(\mathcal{O})$, we can infer that $\p$-a.s.
\begin{align*}
\mbox{weak-* limit in} \, \mathcal{M}_b(\mathcal{O})\,\, \mbox{of} \, \,H({\bf U}_n) = \langle \mathcal{\tilde V}^{\omega}_{x}; H\rangle \,dx+ H_{\infty},
\end{align*}
where $H_{\infty} \in \mathcal{M}_b(\mathcal{O})$, and $H_{\infty}$ is called \emph{concentration defect measure (or concentration Young measure)}. Note that, a simple truncation analysis and Fatou's lemma reveal that $\p$-a.s. $\| \langle \mathcal{\tilde V}^{\omega}_{(\cdot)}; H\rangle\|_{L^1(\mathcal{O})} \leq C$ and thus $\p$-a.s. $\langle \mathcal{\tilde V}^{\omega}_{x}; H \rangle $ is finite for a.e. $x\in \mathcal{O}$. In what follows, regarding the concentration defect measure, we shall make use of the following crucial lemma. For a proof of this lemma modulo cosmetic changes, we refer to Feireisl et. al. \cite[Lemma 2.1]{Fei01}. 

%Indeed, this is just the application of \cite[Lemma 2.1]{Fei01} for $\p$-almost all $\omega$.

\begin{Lemma}
\label{lemma001}
Let $\{{\bf U}_n\}_{n > 0}$, ${\bf U}_n: \Omega \times \mathcal{O} \rightarrow \mathbb{R}^M$ be a sequence generating a Young measure $\{\mathcal{V}^{\omega}_y\}_{y\in \mathcal{O}}$, where $\mathcal{O}$ is a measurable set in $\mathbb{R}^N \times (0,T)$. Let $G: \mathbb{R}^M \rightarrow [0,\infty)$
be a continuous function such that
\begin{equation*}
\sup_{n >0} \E\big[\|G({\bf U}_n)\|^p_{L^1(\mathcal{O})} \big]< \infty, \, \text{for a certain}\,\, p\in(1,\infty),
\end{equation*}
and let $F$ be continuous such that
\begin{equation*}
F: \mathbb{R}^M \rightarrow \mathbb{R}, \quad |F(\bm{z})|\leq G(\bm{z}), \mbox{ for all } \bm{z}\in \mathbb{R}^M.
\end{equation*}
Let us denote $\p$-a.s.
\begin{equation*}
{F_{\infty}} := {\tilde{F}}- \langle \mathcal{\tilde V}^{\omega}_{y}, F(\textbf{v}) \rangle \,dy, \quad 
{G_{\infty}} := {\tilde{G}}- \langle \mathcal{\tilde V}^{\omega}_{y}, G(\textbf{v}) \rangle \,dy.
\end{equation*}
Here ${\tilde{F}}, {\tilde{G}} \in \mathcal{M}_b(\mathcal{O})$ are weak-$*$ limits of $\{F({\bf U}^n)\}_{n > 0}$, $\{G({\bf U}^n)\}_{n > 0}$ respectively in $\mathcal{M}_b(\mathcal{O})$. Then $\p$-a.s. $|F_{\infty}| \leq G_{\infty}$.
\end{Lemma}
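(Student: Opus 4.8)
The plan is to reduce the statement to its deterministic (pathwise) analogue and then invoke the classical result from Feireisl et.~al.\ \cite[Lemma 2.1]{Fei01}. Fix $\omega$ outside a $\p$-null set. By hypothesis the sequence $\{{\bf U}_n(\omega,\cdot)\}$ generates the Young measure $\{\mathcal V^\omega_y\}_{y\in\mathcal O}$, and by Chebyshev's inequality together with $\sup_n\E[\|G({\bf U}_n)\|^p_{L^1(\mathcal O)}]<\infty$ the sequence $G({\bf U}_n(\omega,\cdot))$ is, for $\p$-a.e.\ $\omega$, bounded in $L^1(\mathcal O)$ along a subsequence; hence (after passing to a further subsequence, possibly $\omega$-dependent) it has a weak-$*$ limit $\tilde G(\omega)\in\mathcal M_b(\mathcal O)$, and since $|F({\bf U}_n)|\le G({\bf U}_n)$ the sequence $F({\bf U}_n(\omega,\cdot))$ is likewise bounded in $L^1$, with a weak-$*$ limit $\tilde F(\omega)\in\mathcal M_b(\mathcal O)$. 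First I would record that by the fundamental theorem on Young measures (Lemma~\ref{lem001}) the absolutely continuous parts of these limits are $\langle\mathcal V^\omega_y,F(\textbf v)\rangle\,dy$ and $\langle\mathcal V^\omega_y,G(\textbf v)\rangle\,dy$, so that $F_\infty(\omega)$ and $G_\infty(\omega)$ as defined are precisely the concentration (singular) parts of $\tilde F(\omega)$ and $\tilde G(\omega)$.

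Next I would apply the deterministic Lemma 2.1 of \cite{Fei01} pointwise in $\omega$: for each fixed $\omega$ in the full-measure set, the sequence $\{{\bf U}_n(\omega,\cdot)\}$ together with the continuous functions $G\ge 0$ and $F$ with $|F|\le G$ satisfies exactly the hypotheses of that lemma (the $L^1$-boundedness that the lemma requires holds $\omega$-by-$\omega$ as just argued), and its conclusion gives $|F_\infty(\omega)|\le G_\infty(\omega)$ as measures on $\mathcal O$. Since this holds for $\p$-a.e.\ $\omega$, the assertion follows.

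The one genuinely delicate point is the measurable selection of subsequences. The deterministic lemma requires an honestly convergent (sub)sequence, but a priori the extraction of a weak-$*$ convergent subsequence of $G({\bf U}_n(\omega,\cdot))$ may depend on $\omega$, so one must check that $\omega\mapsto\tilde F(\omega),\tilde G(\omega)$ can be taken jointly measurable and that $F_\infty,G_\infty$ are well-defined random measures. The clean way around this is to observe that in the intended application — and as is implicit in the setup preceding Lemma~\ref{lem001} — one works along a \emph{fixed} subsequence for which $F({\bf U}_n)$ and $G({\bf U}_n)$ converge weak-$*$ in $\mathcal M_b(\mathcal O)$ for a.e.\ $\omega$ simultaneously (this is how $\tilde F,\tilde G$ arise in the first place, on the new probability space supplied by Lemma~\ref{lem001}), so no further $\omega$-dependent extraction is needed and the pointwise application of \cite[Lemma 2.1]{Fei01} is legitimate. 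With that understood, the proof is essentially ``apply the known deterministic result for a.e.\ fixed $\omega$'', and I would keep the write-up to a couple of sentences referring to \cite{Fei01} for the nontrivial measure-theoretic estimate $|F_\infty|\le G_\infty$, exactly as the excerpt anticipates (``modulo cosmetic changes'').
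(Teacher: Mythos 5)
Your argument is exactly what the paper intends: it gives no proof of its own but refers to \cite[Lemma 2.1]{Fei01} ``modulo cosmetic changes'', i.e.\ the deterministic lemma applied pathwise for $\p$-a.e.\ fixed $\omega$, which is precisely your reduction. Your additional remark that no $\omega$-dependent subsequence extraction is needed because the weak-$*$ limits $\tilde F,\tilde G$ are already assumed to exist along the fixed sequence is a correct and welcome clarification, not a deviation.
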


\subsection{Stochastic framework}

Here we specify the definition of the stochastic forcing term.
Let $(\Omega,\mf,(\mf_t)_{t\geq0},\prst)$ be a stochastic basis with a complete, right-continuous filtration. The stochastic process $W$ is a cylindrical $(\mf_t)$-Wiener process in a separable Hilbert space $\mathfrak{U}$. It is formally given by the expansion
$$W(t)=\sum_{k\geq 1} e_k W_k(t),$$
where $\{ W_k \}_{k \geq 1}$ is a sequence of mutually independent real-valued Brownian motions relative to $(\mf_t)_{t\geq0}$ and $\{e_k\}_{k\geq 1}$ is an orthonormal basis of $\mathfrak{U}$.
To give the precise definition of the diffusion coefficient $\mathbb{G}$, consider $\varrho\in L^\gamma(\mt)$, $\varrho\geq0$, and $\bfu\in L^2(\mt)$ such that $\sqrt\varrho\bfu\in L^2(\mt)$. 
Denote $\bf m=\varrho\bf u$ and let $\,\mathbb{G}(\varrho,{\bf m}):\mathfrak{U}\rightarrow L^1(\mt)$ be defined as follows
$$\mathbb{G}(\varrho,{\bf m})e_k=\mathbf{G}_k(\cdot,\varrho(\cdot),{\bf m}(\cdot)).$$
The coefficients $\mathbf{G}_{k}:\mt\times\mr\times\mr^3\rightarrow\mr^3$ are $C^1$-functions that satisfy uniformly in $x\in\mt$
\begin{align}
|\vc{G}_{k}(\cdot,0,0)|+| \partial_\vr \vc{G}_k | + |\nabla_{{\bf m}} \vc{G}_k | &\leq \beta_k, \quad \sum_{k \geq 1} \beta_k  < \infty.
\label{FG2}
\end{align}
As discussed in \cite{BrHo}, we understand the stochastic integral as a process in the Hilbert space $W^{-m,2}(\mt)$, $m>3/2$. Indeed, it is easy to check that under the above assumptions on $\varrho$ and $\bf m$, the mapping $\mathbb{G}(\varrho,\varrho\bf m)$ belongs to $L_2(\mathfrak{U};W^{-m,2}(\mt))$, the space of Hilbert--Schmidt operators from $\mathfrak{U}$ to $W^{-m,2}(\mt)$.
Consequently, if\footnote{Here $\mathcal{P}$ denotes the predictable $\sigma$-algebra associated to $(\mf_t)$.}
\begin{align*}
\varrho&\in L^\gamma(\Omega\times(0,T),\mathcal{P},\dif\prst\otimes\dif t;L^\gamma(\mt)),\\
\sqrt\varrho\bfu&\in L^2(\Omega\times(0,T),\mathcal{P},\dif\prst\otimes\dif t;L^2(\mt)),
\end{align*}
and the mean value $(\varrho(t))_{\mt}$ is essentially bounded
then the stochastic integral
\[
\int_0^t \mathbb{G}(\vr, \vr \vu) \ {\rm d} W = \sum_{k \geq 1}\int_0^t \vc{G}_k (\cdot, \vr, \vr \vu) \ {\rm d} W_k
\]
 is a well-defined $(\mf_t)$-martingale taking values in $W^{-m,2}(\mt)$. Note that the continuity equation \eqref{P1} implies that the mean value $(\varrho(t))_{\mt}$ of the density $\varrho$ is constant in time (but in general depends on $\omega$).
Finally, we define the auxiliary space $\mathfrak{U}_0\supset\mathfrak{U}$ via
$$\mathfrak{U}_0:=\bigg\{u=\sum_{k\geq1}\beta_k e_k;\;\sum_{k\geq1}\frac{\beta_k^2}{k^2}<\infty\bigg\},$$
endowed with the norm
$$\|u\|^2_{\mathfrak{U}_0}=\sum_{k\geq1}\frac{\beta_k^2}{k^2},\quad v=\sum_{k\geq1}\beta_k e_k.$$
Note that the embedding $\mathfrak{U}\hookrightarrow\mathfrak{U}_0$ is Hilbert--Schmidt. Moreover, trajectories of $W$ are $\prst$-a.s. in $C([0,T];\mathfrak{U}_0)$.

Finally, we recall the ``Kolmogorov test''  for the existence of continuous modifications of real-valued stochastic processes.

\begin{Lemma}
	\label{lemma01}
	Let $Y={\lbrace Y(t) \rbrace}_{t \in [0,T]} $ be a real-valued stochastic process defined on a probability space $(\Omega,\mf,(\mf_t)_{t\geq0},\prst)$. Suppose that there are constants $\alpha >1, \beta >0$, and $C>0$ such that for all $s,t \in [0,T]$,
	\begin{align*}
	\E[|Y(t)-Y(s)|^\alpha] \le C |t-s|^{1 + \beta}.
	\end{align*}
	Then there exists a continuous modification of $Y$ and the paths of $Y$ are $\gamma$-H\"{o}lder continuous for every $\gamma \in [0, \frac{\beta}{\alpha})$.
\end{Lemma}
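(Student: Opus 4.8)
The plan is to follow the classical dyadic chaining argument of Kolmogorov and Chentsov. Fix for the moment $\gamma\in(0,\beta/\alpha)$ and introduce the dyadic points $D_n:=\{j2^{-n}T:\ j=0,1,\dots,2^n\}$ together with $D:=\bigcup_{n\ge 1}D_n$. By the Markov inequality and the hypothesis, for any $\lambda>0$ and any $s,t\in[0,T]$,
\[
\p\big(|Y(t)-Y(s)|\ge\lambda\big)\le\lambda^{-\alpha}\,\E\big[|Y(t)-Y(s)|^{\alpha}\big]\le C\lambda^{-\alpha}|t-s|^{1+\beta}.
\]
Applying this with $s=(j-1)2^{-n}T$, $t=j2^{-n}T$ and $\lambda=2^{-\gamma n}$, and summing over the $2^n$ consecutive pairs at level $n$, we get
\[
\p\Big(\max_{1\le j\le 2^n}\big|Y(j2^{-n}T)-Y((j-1)2^{-n}T)\big|\ge 2^{-\gamma n}\Big)\le C\,2^n\,2^{\gamma\alpha n}\,2^{-n(1+\beta)}=C\,2^{-n(\beta-\gamma\alpha)}.
\]
Since $\gamma<\beta/\alpha$, the exponent $\beta-\gamma\alpha$ is strictly positive, so these probabilities are summable in $n$; by the Borel--Cantelli lemma there is a $\p$-null set $\mathcal{N}$ outside of which a random index $N=N(\omega)$ exists such that
\[
\max_{1\le j\le 2^n}\big|Y(j2^{-n}T)-Y((j-1)2^{-n}T)\big|<2^{-\gamma n}\qquad\text{for all }n\ge N.
\]

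The heart of the argument is the deterministic chaining step. On $\Omega\setminus\mathcal{N}$ one proves, by induction on $m$, that for all $m>n\ge N$ and all $s,t\in D_m$ with $0\le t-s\le 2^{-n}T$ one has $|Y(t)-Y(s)|\le K\,2^{-\gamma n}$, where $K:=2/(1-2^{-\gamma})$; the induction step writes $s$ and $t$ in their finite dyadic expansions and telescopes through at most two chains of consecutive-level increments, each chain contributing a geometric sum $\sum_{\ell>n}2^{-\gamma\ell}$. Re-expressing the bound in terms of $|t-s|$ (choosing $n$ with $2^{-(n+1)}T\le|t-s|<2^{-n}T$) shows that $Y|_D$ is $\p$-a.s. $\gamma$-H\"older continuous on $D$, hence uniformly continuous on $D$, and therefore extends uniquely to a process $\tilde Y=\{\tilde Y(t)\}_{t\in[0,T]}$ whose sample paths are $\gamma$-H\"older continuous; on $\mathcal{N}$ we set $\tilde Y\equiv 0$. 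Since this construction does not depend on $\gamma$, intersecting the countably many null sets obtained for a sequence $\gamma_j\uparrow\beta/\alpha$ yields a single $\tilde Y$ with $\gamma$-H\"older paths for every $\gamma\in[0,\beta/\alpha)$. Finally, $\tilde Y$ is a modification of $Y$: for fixed $t\in[0,T]$ choose $t_k\in D$ with $t_k\to t$; the hypothesis gives $\E|Y(t_k)-Y(t)|^{\alpha}\to 0$, so $Y(t_k)\to Y(t)$ in probability, while $Y(t_k)=\tilde Y(t_k)\to\tilde Y(t)$ $\p$-a.s., whence $Y(t)=\tilde Y(t)$ $\p$-a.s.

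I expect the only genuinely delicate point to be this chaining estimate --- selecting the correct truncation level $n$ and organising the two telescoping chains so that the geometric series collapses to the clean constant $K$; everything else (Markov's inequality, Borel--Cantelli, and the density plus convergence-in-probability argument establishing that $\tilde Y$ is a modification) is routine. The range of admissible H\"older exponents, namely every $\gamma<\beta/\alpha$, is exactly what the summability of $2^{-n(\beta-\gamma\alpha)}$ permits, and the endpoint $\gamma=\beta/\alpha$ is genuinely lost by this method.
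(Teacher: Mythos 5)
Your argument is the standard Kolmogorov--Chentsov chaining proof (Markov inequality on dyadic increments, Borel--Cantelli, telescoping chains, extension by uniform continuity, and identification of the modification via convergence in probability), and it is correct, including the countable intersection over exponents $\gamma_j\uparrow\beta/\alpha$ needed to get all $\gamma<\beta/\alpha$ simultaneously. The paper itself only recalls this classical ``Kolmogorov test'' without proof, so there is nothing to compare against beyond noting that your proof is the canonical one.
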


\subsection{Stochastic compressible Euler equations}

Since the weak (measure-valued)--strong uniqueness principle for dissipative measure-valued solutions require the existence of strong solution, we first recall the notion of local strong pathwise solution for stochastic compressible Euler equations, being first introduced in \cite{BrMe}. Such a solution is strong in both the probabilistic and PDE sense, at least locally in time. To be more precise, system \eqref{P1}--\eqref{P2} will be satisfied pointwise (not only in the sense of distributions) on the given stochastic basis associated to the cylindrical Wiener process $W$.

\begin{Definition}[Local strong pathwise solution] \label{def:strsol}

Let $\StoB$ be a stochastic basis with a complete right-continuous filtration. Let ${W}$ be an $(\mathfrak{F}_t) $-cylindrical Wiener process and $(\varrho_0,\bfu_0)$ be a $W^{s,2}(\T^3)\times W^{s,2}(\T^3)$-valued $\mathfrak{F}_0$-measurable random variable, for some $s>7/2$.
A triplet
$(\varrho,\vu,\mathfrak{t})$ is called a local strong pathwise solution to the system \eqref{P1}--\eqref{P4} provided
\begin{enumerate}[(a)]
\item $\mathfrak{t}$ is an a.s. strictly positive  $(\mathfrak{F}_t)$-stopping time;
\item the density $\varrho$ is a $W^{s,2}(\mt)$-valued $(\mathfrak{F}_t)$-progressively measurable process satisfying
$$\varrho(\cdot\wedge \mathfrak{t})  > 0,\ \varrho(\cdot\wedge \mathfrak{t}) \in C([0,T]; W^{s,2}(\mt)) \quad \mathbb{P}\text{-a.s.};$$
\item the velocity $\vu$ is a $W^{s,2}(\mt)$-valued $(\mathfrak{F}_t)$-progressively measurable process satisfying
$$ \vu(\cdot\wedge \mathfrak{t}) \in   C([0,T]; W^{s,2}(\mt))\quad \mathbb{P}\text{-a.s.};$$
\item  there holds $\prst$-a.s.
\[
\begin{split}
\varrho (t\wedge \mathfrak{t}) &= \varrho_0 -  \int_0^{t \wedge \mathfrak{t}} \Div(\varrho\vu ) \ \dif s, \\
(\varrho \vu) (t \wedge \mathfrak{t})  &= \varrho_0 \vu_0 - \int_0^{t \wedge \mathfrak{t}} \Div (\varrho\vu \otimes\vu ) \ \dif s 
- \int_0^{t \wedge \mathfrak{t}}\Grad \varrho^\gamma \ \dif s + \int_0^{t \wedge \mathfrak{t}} {\tn{G}}(\varrho,\varrho\vu ) \ \D W,
\end{split}
\]
for all $t\in[0,T]$.
\end{enumerate}
\end{Definition}

Note that classical solutions require spatial derivatives of $\vr$ and $\vu$ to be continuous $\prst$-a.s. This motivates the following definition.

\begin{Definition}[Maximal strong pathwise solution]\label{def:maxsol}
Fix a stochastic basis with a cylindrical Wiener process and an initial condition as in Definition \ref{def:strsol}. A quadruplet $$(\varrho,\vu,(\mathfrak{t}_R)_{R\in\mn},\mathfrak{t})$$ is a maximal strong pathwise solution to system \eqref{P1}--\eqref{P4} provided

\begin{enumerate}[(a)]
\item $\mathfrak{t}$ is an a.s. strictly positive $(\mathfrak{F}_t)$-stopping time;
\item $(\mathfrak{t}_R)_{R\in\mn}$ is an increasing sequence of $(\mathfrak{F}_t)$-stopping times such that
$\mathfrak{t}_R<\mathfrak{t}$ on the set $[\mathfrak{t}<T]$,
%$\mathfrak{t}_R<\mathfrak{t}$ on $[\mathfrak{t}<T]$ and 
$\lim_{R\to\infty}\mathfrak{t}_R=\mathfrak t$ a.s. and
\begin{equation}\label{eq:blowup}
\sup_{t\in[0,\mathfrak{t}_R]}\|\vu(t)\|_{1,\infty}\geq R\quad \text{on}\quad [\mathfrak{t}<T] ;
\end{equation}
\item each triplet $(\varrho,\vu,\mathfrak{t}_R)$, $R\in\mn$,  is a local strong pathwise solution in the sense  of Definition \ref{def:strsol}.
\end{enumerate}
\end{Definition}

Regarding existence of a maximal pathwise solution, there are number of works available in literature in the context of various SPDE or SDE models, see for instance \cite{BMS,Elw}. For the specific result on compressible Euler equations, we refer to the work by Breit $\&$ Mensah in \cite[Theorem 2.4]{BrMe}.

\begin{Theorem}\label{thm:main}
Let $s>7/2$ and $(\varrho_0,\bfu_0)$ be an $\mathfrak{F}_0$-measurable, $W^{s,2}(\mt)\times W^{s,2}(\mt)$-valued random variable such that $\varrho_0>0$ $\p$-a.s. Then, under certain assumptions imposed on the coefficients $\mathbf{G}_k$,
there exists a unique maximal strong pathwise solution, in the sense of Definition \ref{def:maxsol}, $(\varrho,\vu,(\mathfrak{t}_R)_{R\in\mn},\mathfrak{t})$ to problem \eqref{P1}--\eqref{P4}
 with the initial condition $(\varrho_0,\vu_0)$.
\end{Theorem}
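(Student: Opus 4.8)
The plan is to obtain the maximal strong pathwise solution by a now-standard three-step programme: (i) construct a \emph{local} strong solution up to a small stopping time, (ii) establish \emph{pathwise uniqueness} of local solutions, and (iii) use a maximality/gluing argument together with the Gy\"ongy--Krylov characterization to upgrade local-in-law existence to local pathwise existence and then to pass to the maximal time. The underlying system is symmetrizable hyperbolic: writing $\vc{V}=(\varrho,\vu)$, equations \eqref{P1}--\eqref{P2} take the quasilinear form $\dif\vc{V}+\sum_{j}A_j(\vc{V})\partial_{x_j}\vc{V}\,\dif t=\mathbb{F}(\vc{V})\,\dif W$ after dividing the momentum equation by $\varrho$ (legitimate as long as $\varrho$ stays strictly positive), and the $A_j$ admit a symmetrizer smoothly depending on $\vc{V}$. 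Assumption \eqref{FG2} guarantees that $\mathbb{F}$ maps $W^{s,2}\times W^{s,2}$ into Hilbert--Schmidt operators with values in the same space, with linear growth and local Lipschitz dependence, since the $\vc{G}_k$ are $C^1$ of Nemytskij type with summable bounds $\beta_k$.

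For step (i), I would regularize: mollify the initial data, add an artificial viscosity $\mu\Delta$ (or use a Galerkin/Friedrichs truncation), and solve the parabolic-type SPDE by a fixed point argument in $L^2(\Omega;C([0,\tau];W^{s,2}))$ for a short random time $\tau$; then derive $\mu$-uniform a priori estimates in $W^{s,2}$ by applying It\^o's formula to $\|\vc{V}\|_{W^{s,2}}^2$, using the symmetrizer to control the transport terms (commutator/Moser estimates of Kato--Ponce type in $W^{s,2}$, $s>7/2$, so that $W^{s-1,2}\hookrightarrow L^\infty$ controls $\|\vu\|_{1,\infty}$) and the Burkholder--Davis--Gundy inequality plus \eqref{FG2} to control the stochastic terms. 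A stopping time of the form $\mathfrak{t}_R=\inf\{t:\|\vu(t)\|_{1,\infty}+\|\varrho(t)\|_{W^{s,2}}\geq R\text{ or }\inf\varrho(t)\leq 1/R\}$ keeps all nonlinearities bounded. Stochastic compactness (Prokhorov, Skorokhod, and the martingale identification as in the existence proof sketched in Section~\ref{proof1}) then yields a martingale solution as $\mu\to0$; the regularity is preserved because the $W^{s,2}$ bounds are $\mu$-uniform, and weak lower semicontinuity plus the compact embeddings (Lemma~\ref{comp}) give $\varrho,\vu\in C([0,T];W^{s,2})$ after localization.

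For step (ii), pathwise uniqueness of two local solutions $(\varrho_1,\vu_1)$, $(\varrho_2,\vu_2)$ on a common stochastic basis up to the minimum of their stopping times follows by applying It\^o's formula to a suitable energy of the difference in $L^2$ (or $W^{1,2}$) — the hyperbolic structure gives cancellation of the top-order transport term, \eqref{FG2} gives a Lipschitz bound $\|\mathbb{F}(\vc{V}_1)-\mathbb{F}(\vc{V}_2)\|_{L_2}^2\lesssim\|\vc{V}_1-\vc{V}_2\|_{L^2}^2$ on the region where the solutions are bounded and $\varrho$ is bounded below, and Gronwall closes the estimate. Combining (i) and (ii) via Gy\"ongy--Krylov converts the martingale solution into a pathwise one on the original basis. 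Finally, step (iii): define $\mathfrak{t}_R$ as above, note they are nondecreasing by uniqueness, set $\mathfrak{t}=\lim_R\mathfrak{t}_R$, and check the blow-up alternative \eqref{eq:blowup} — on $[\mathfrak{t}<T]$ the $W^{1,\infty}$-norm must escape, because if it stayed bounded one could restart the local existence theorem at $\mathfrak{t}_R$ and contradict maximality (the lower bound on $\varrho$ cannot fail independently, since $\partial_t\log\varrho=-\vu\cdot\nabla\log\varrho-\divv\vu$ is controlled by $\|\vu\|_{1,\infty}$ along characteristics).

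I expect the main obstacle to be the \emph{a priori estimate in the high norm $W^{s,2}$ uniformly in the regularization parameter}: one must commute $\Lambda^s=(1-\Delta)^{s/2}$ past the quasilinear transport operator and absorb the resulting commutators using the symmetrizer, all while the It\^o correction from the stochastic term and the BDG estimate are handled simultaneously, and the positivity of $\varrho$ must be propagated. This is exactly the delicate point in \cite{BrMe}; everything else (fixed point, compactness, Gy\"ongy--Krylov, the maximality argument) is comparatively routine once the high-regularity bound and pathwise uniqueness are in hand.
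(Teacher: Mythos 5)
The paper does not actually prove this theorem: it is imported verbatim from Breit \& Mensah \cite[Theorem 2.4]{BrMe}, and the text around it only records the statement and the reference. Your outline reproduces, in broad strokes, the strategy of that cited work — rewrite \eqref{P1}--\eqref{P2} as a symmetrizable quasilinear hyperbolic system in nonconservative variables (legitimate while $\varrho$ stays bounded away from zero), regularize/truncate, derive $\mu$-uniform $W^{s,2}$ energy estimates via $\Lambda^s$-commutator (Kato--Ponce/Moser) bounds combined with It\^o's formula and BDG for the stochastic terms, prove pathwise uniqueness by an $L^2$ estimate on differences plus Gronwall, upgrade to pathwise existence by Gy\"ongy--Krylov, and glue along the stopping times $\mathfrak t_R$ to get the maximal solution with the blow-up alternative \eqref{eq:blowup} — so as a proof sketch it is consistent with how the result is actually established. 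Two caveats are worth flagging. First, your claim that assumption \eqref{FG2} already makes the Nemytskij operator $\mathbb{G}$ act as a Hilbert--Schmidt map into $W^{s,2}\times W^{s,2}$ is not correct: \eqref{FG2} controls only $\mathbf{G}_k$ and its first derivatives, whereas the high-norm estimates with $s>7/2$ require summable bounds on derivatives of $\mathbf{G}_k$ up to order $s$ (and compatibility ensuring the coefficient $\varrho^{-1}\mathbb{G}(\varrho,\varrho\vu)$ appearing after division by $\varrho$ stays regular); this is precisely why the theorem is stated ``under certain assumptions imposed on the coefficients $\mathbf{G}_k$'' rather than under \eqref{FG2}, so your sketch should not lean on \eqref{FG2} at this point. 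Second, the artificial-viscosity regularization you propose is a different route from the cited proof, which works with the mollified/truncated hyperbolic system directly; the vanishing-viscosity route can be made to work on the torus but puts the burden on $\mu$-uniform high-norm estimates for a mixed hyperbolic--parabolic system, which is an extra (avoidable) layer of delicacy rather than a gap.
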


\subsection{Stochastic compressible Navier--Stokes equations}

There is a relatively small literature related to stochastic compressible Navier--Stokes equations, with the first contribution by Breit $\&$ Hofmanova in \cite{BrHo}. In this connection we also mention \cite{BrFeHo2015A} and \cite{BrFeHobook}, which improves the results in \cite{BrHo}. All the papers complement \eqref{P1NS}--\eqref{P2NS} with periodic boundary conditions. Here we recall the concept of finite energy weak martingale solutions to \eqref{P1NS}--\eqref{P2NS}. These solutions are weak in the analytical sense (derivatives only exists in the sense of distributions) and weak in the probabilistic sense (the probability space is an integral part of the solution) as well. Moreover, the time-evolution of the energy can be controlled in terms of its initial state. They exists globally in time.

\begin{Definition}[Finite energy weak martingale solution]
\label{def:weakMartin}
Let $\Lambda_{\varepsilon}$ be a Borel probability measure on $L^\gamma(\mathbb{T}^3)\times L^{2\gamma/\gamma+1}(\mathbb{T}^3)$. Then $\big[ \big(\Omega_{\varepsilon},\mathfrak{F}_{\varepsilon}, (\mathfrak{F}_{{\varepsilon},t})_{t\geq0},\mathbb{P}_{\varepsilon} \big);\varrho_{\varepsilon},\mathbf{u}_{\varepsilon}, W_{\varepsilon} \big]$ is a finite energy weak martingale solution of \eqref{P1NS}--\eqref{P2NS} if
\begin{enumerate}[(a)]
\item $ \big(\Omega_{\varepsilon},\mathfrak{F}_{\varepsilon}, (\mathfrak{F}_{{\varepsilon},t})_{t\geq0},\mathbb{P}_{\varepsilon} \big)$ is a stochastic basis with a complete right-continuous filtration,
\item $W_{\varepsilon}$ is a $(\mathfrak{F}_{{\varepsilon},t})$-cylindrical Wiener process,
\item the density $\varrho_{\varepsilon}$ satisfies $\varrho_{\varepsilon}\geq 0$, $t\mapsto \langle \varrho_{\varepsilon}(t, \cdot),\phi\rangle\in C[0,T]$ for any $\phi\in C^\infty(\mathbb{T}^3)$ $\mathbb{P}_{\varepsilon}$-a.s., the function $t\mapsto \langle \varrho_{\varepsilon}(t, \cdot),\phi\rangle$ is progressively measurable and %$\rho\in L^p\big(\T^N; L^\infty\big( 0,T;L^\gamma_{\mathrm{loc}}(\mathbb{R}^n)\big)\big)$ $\mathbb{P}$-a.s 
\begin{align*}
\mathbb{E}_{\varepsilon}\, \bigg[ \sup_{t\in(0,T)}\Vert  \varrho_{\varepsilon}(t,\cdot)\Vert_{L^\gamma(\mt)}^p\bigg]<\infty 
\end{align*}
for all $1\leq p<\infty$,
\item the velocity field $\mathbf{u}_{\varepsilon}$ is $(\mathfrak{F}_{{\varepsilon},t})$-adapted and %$\mathbf{v}\in L^p\big(\T^N; L^2\big( 0,T;W^{1,2}_{\mathrm{loc}}(\mathbb{R}^n)\big)\big)$ $\mathbb{P}$-a.s 
%and 
\begin{align*}
\mathbb{E}_{\varepsilon}\, \bigg[ \int_0^T\Vert  \mathbf{u}_{\varepsilon}\Vert^2_{W^{1,2}(\mt)}\,\mathrm{d}t\bigg]^p<\infty 
\end{align*}
for all $1\leq p<\infty$,
\item the momentum $\varrho_{\varepsilon}\mathbf{u}_{\varepsilon}$ satisfies  $t\mapsto \langle \varrho_{\varepsilon}\mathbf{u}_{\varepsilon},\bm\varphi\rangle\in C[0,T]$ for any $\bm{\varphi}\in C^\infty(\mathbb{T}^3)$ $\mathbb{P}$-a.s., the function $t\mapsto \langle \varrho_{\varepsilon}\mathbf{u}_{\varepsilon},\bm{\phi}\rangle$ is progressively measurable and 
%$\rho\mathbf{v}\in L^p\big(\T^N; L^\infty\big( 0,T;L^\frac{2\gamma}{\gamma+1}_{\mathrm{loc}}(\mathbb{R}^n)\big)\big)$ $\mathbb{P}$-a.s 
\begin{align*}
\mathbb{E}_{\varepsilon}\, \bigg[ \sup_{t\in(0,T)}\Vert  \varrho_{\varepsilon}\mathbf{u}_{\varepsilon}\Vert_{L^\frac{2\gamma}{\gamma+1}(\mt)}^p\bigg]<\infty 
\end{align*}
for all $1\leq p<\infty$,
\item $\Lambda_{\varepsilon}=\mathbb{P}_{\varepsilon}\circ \big[ \varrho_{\varepsilon}(0),\varrho_{\varepsilon}\mathbf{u}_{\varepsilon}(0) \big]^{-1}$,
\item for all $\phi\in C^\infty(\mathbb{T}^3)$ and $\bm{\varphi}\in C^\infty(\mathbb{T}^3)$ we have
\begin{equation}\label{eq:energy}
\begin{aligned}
\langle \varrho_{\varepsilon}(t), \phi\rangle &= \langle\varrho_{\varepsilon}(0) , \phi\rangle - \int_0^{t}\langle\varrho_{\varepsilon}\mathbf{u}_{\varepsilon}, \nabla_x \phi\rangle\mathrm{d}s 
\\
\langle \varrho_{\varepsilon}\mathbf{u}_{\varepsilon}(t), \bm\varphi\rangle &= \langle \varrho_{\varepsilon}\mathbf{u}_{\varepsilon} (0), \bm{\varphi}\rangle - \int_0^{t}\langle \varrho_{\varepsilon}\mathbf{u}_{\varepsilon}\otimes\mathbf{u}_{\varepsilon}, \nabla_x  \bm{\varphi}\rangle\,\mathrm{d}s
+ {\varepsilon}\,\int_0^{t} \langle\mathbb{S}(\nabla_x \mathbf{u}_{\varepsilon})\,, \nabla_x  \bm{\varphi}\rangle\mathrm{d}s 
\\&
- \int_0^{t}\langle  p(\varrho_{\varepsilon}), \mathrm{div}_x\,\bm{\varphi}\rangle\,\mathrm{d}s
+\int_0^{t}\langle\mathbb{G}(\varrho_{\varepsilon},\varrho_{\varepsilon}\mathbf{u}_{\varepsilon}), \bm{\varphi}\rangle\,\mathrm{d}W
\end{aligned}
\end{equation}
$\mathbb{P}$-a.s. for all $t\in[0,T]$,
\item the energy inequality\index{energy inequality}
\begin{align}\label{EI2''}
\begin{aligned}
&
-\int_0^T \partial_t \psi \int_{\T^3} \bigg[ \frac{1}{2} \varrho_{\varepsilon} | {\bf u}_{\varepsilon} |^2  + \frac{ \varrho_{\varepsilon}^\gamma}{\gamma-1} \bigg] \,dx \,dt
+ {\varepsilon}\, \int_0^T \psi \int_{\T^3}  \mathbb{S} (\nabla_x  {\bf u}_{\varepsilon}): \nabla_x  {\bf u}_{\varepsilon} \dx\,  \dt\\
& \qquad \leq \psi(0)\int_{\T^3} \bigg[ \frac{1}{2}\frac{|\varrho_{\varepsilon}\bfu_{\varepsilon}(0)|^2}{\varrho_{\varepsilon}(0)}  + \frac{\varrho_{\varepsilon}^\gamma(0)}{\gamma-1}  \bigg] \dx
+\sum_{k=1}^\infty\int_0^T \psi \bigg(\int_{\T^3}\mathbf{G}_k (\varrho_{\varepsilon}, \varrho_{\varepsilon} {\bf u}_{\varepsilon})\cdot{\bf u}_{\varepsilon}\dx\bigg){\rm d} W_k\\
& \qquad \qquad + \frac{1}{2}\sum_{k = 1}^{\infty}  \int_0^T \psi
\int_{\T^3} \varrho_{\varepsilon}^{-1}| \mathbf{G}_k (\varrho_{\varepsilon}, \varrho_{\varepsilon} {\bf u}_{\varepsilon}) |^2 \,dx \,dt
\end{aligned}
\end{align}
holds $\mathbb P$-a.s., for all $\psi \in C_c^{\infty}([0,T))$, $\psi \ge 0$.
%Here $M_E$ is a real-valued square integrable martingale %satisfying
%\[
%\E\bigg[ \sup_{t \in [0,T]} |M_E|^p \bigg] \leq c(p) \bigg( 1 + \E\int_{\R^n} \left( \frac{|(\rho \bfv)(0, \cdot)|^2 }{2 \rho(0, \cdot)} +  H(\rho(0), \overline\rho) \bigg) ^p
%\right)
%\]
%for any $1 \leq p < \infty$,
%and $H(\rho,\overline\rho)$ is the pressure potential given by
%\begin{align}
%H(\rho,\overline \rho)=\frac{a}{\gamma-1}\Big(\rho^\gamma-\gamma\overline\rho^{\gamma-1}(\rho-\overline\rho)-\overline\rho^\gamma\Big).
%\end{align}
\end{enumerate}
\end{Definition}

\begin{Remark}
Note that we may take, for every $\varepsilon$, $\Omega_{\varepsilon} =[0,1]$, with $\mathfrak{F}_{\varepsilon}$ the $\sigma$-algebra of the Borelians on $[0,1]$, and $\mathbb{P}_{\varepsilon}$ the Lebesgue measure on $[0,1]$, see Skorohod \cite{sko}. Moreover, we can assume without loss of generality that there exists one common Wiener space $W$ for all $\varepsilon$. Indeed, this can be achieved by performing the compactness argument from any chosen subsequence ${\lbrace \varepsilon_n \rbrace}_{n \in \N}$ at once. However, we stress the fact that it may not be possible to preclude the dependence of $\varepsilon$ in the filtration $ (\mathfrak{F}_{{\varepsilon},t})_{t\geq0}$, due to lack of pathwise uniqueness for the underlying system.
\end{Remark}
The following existence theorem is shown in \cite[Theorem 1]{BrHo} (see also \cite[Chapter 4]{BrFeHobook} for a more general version).
\begin{Theorem}
\label{thm:Romeo1}
Let $\gamma>\frac{3}{2}$ and assume that $\Lambda_{\varepsilon}$ is a Borel probability measure on $L^\gamma(\mathbb{T}^3)\times L^{2\gamma/\gamma+1}(\mathbb{T}^3)$ such that
\begin{align*}
\Lambda_{\varepsilon}\Bigg\{(\varrho,\mathbf{m})\in L^\gamma(\mathbb{T}^3)\times L^{2\gamma/\gamma+1}(\mathbb{T}^3) \, :\, 0<M_1\leq \varrho\leq M_2,\, \mathbf{m}\vert_{\{\varrho=0\}}=0   \Bigg\}=1  
\end{align*}
with constants $0<M_1<M_2$. Furthermore, assume that the following moment estimate
\begin{align*}%\label{initial}
\int_{L^\gamma_x\times  L^{2\gamma/\gamma+1}_x} \bigg\Vert\frac{1}{2}\frac{\vert  \mathbf{m} \vert^2}{\varrho}+ \frac{ \varrho^\gamma}{\gamma-1}  \bigg\Vert^p_{L^1(\T^3)}\, \mathrm{d}\Lambda_{\varepsilon}(\varrho,\mathbf{m})<\infty,
\end{align*}
holds for all $1\leq p<\infty$. Finally, assume that \eqref{FG2} holds. Then there exists a finite energy weak martingale solution of \eqref{P1NS}--\eqref{P2NS} in the sense of Definition \ref{def:weakMartin} with initial law $\Lambda_{\varepsilon}$.
\end{Theorem}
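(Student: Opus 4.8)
The plan is to follow the multi--level approximation scheme of Lions--Feireisl type, adapted to the stochastic framework as in \cite{BrHo}. Since pathwise uniqueness is not available for the compressible Navier--Stokes system, all compactness arguments must be carried out at the level of laws, and the Jakubowski--Skorokhod representation theorem will be used to upgrade weak convergence of laws to almost sure convergence on a new probability space. Concretely, I would introduce a Faedo--Galerkin approximation for the momentum equation (projecting the velocity onto a finite--dimensional space $X_N$ of smooth functions), coupled with a regularized continuity equation $\partial_t\varrho+\Div(\varrho\bfu)=\eta\Delta\varrho$ carrying an artificial viscosity $\eta>0$, and an artificial pressure obtained by adding a small multiple of a high power $\varrho^\beta$ to $\varrho^\gamma$ (with $\beta$ chosen large enough to make the density estimates work). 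For fixed $N$, $\eta$ and the artificial pressure parameter, the continuity equation is a linear parabolic problem solved by a fixed--point argument, and the Galerkin momentum equation reduces to a finite system of It\^o SDEs with locally Lipschitz coefficients, which has a unique local solution extendable globally by the energy bound.

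The next step is to derive a priori estimates uniform in all approximation parameters. Apply It\^o's formula to the energy functional $\int_{\T^3}\big[\tfrac12\varrho|\bfu|^2+H(\varrho)\big]\dx$, where $H$ is the (augmented) pressure potential; the second--order It\^o correction produces precisely the term $\tfrac12\sum_k\int_{\T^3}\varrho^{-1}|\mathbf G_k(\varrho,\varrho\bfu)|^2\dx$ appearing in \eqref{en_01}, which is controlled via the growth hypothesis \eqref{FG2} together with the assumed $p$--th moment bound on the initial energy. Taking $p$--th powers, estimating the stochastic integral by the Burkholder--Davis--Gundy inequality, and applying Gronwall's lemma yields moment bounds, uniform in $N,\eta$ and the artificial pressure parameter, for $\varrho$ in $L^\infty_tL^\gamma_x$ (and $L^\infty_tL^\beta_x$ at the artificial pressure level), for $\sqrt\varrho\bfu$ in $L^\infty_tL^2_x$, and for $\bfu$ in $L^2_tW^{1,2}_x$. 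Reading off the equations then gives fractional--in--time Sobolev bounds on $t\mapsto\langle\varrho,\phi\rangle$ and $t\mapsto\langle\varrho\bfu,\bfvarphi\rangle$, which together with Lemma \ref{comp} provide the compactness needed in negative Sobolev norms.

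From these bounds one obtains tightness of the joint laws of the approximate density, momentum, velocity and of the Wiener process on a suitable path space (a product of weak--$*$ measurable function spaces with spaces of the form $C([0,T];W^{-m,2}(\T^3))$), using an Aldous--type / fractional Sobolev criterion. By Jakubowski--Skorokhod there is a new stochastic basis carrying random variables with identical laws that converge almost surely, and one passes to the limit successively: first $N\to\infty$, then $\eta\to0$, then the artificial pressure parameter $\to0$. At each stage the two delicate points are: (i) identifying the stochastic integral in the limit, which is done by showing the limit momentum is a square--integrable martingale with the correct quadratic cross--variation with the limiting Wiener process, exploiting the uniform moment bounds and the martingale characterization of the stochastic integral; and (ii) recovering strong convergence of the density so that the pressure passes to the limit as $p(\varrho_n)\rightharpoonup p(\varrho)$ rather than to some unidentified weak limit $\overline{p(\varrho)}$, and similarly for the energy inequality, where one must check lower semicontinuity of the dissipation term and pass to the limit in the noise contributions.

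I expect the main obstacle to be point (ii): the stochastic analogue of the Lions--Feireisl effective viscous flux identity combined with the renormalization theory for the continuity equation. The argument requires testing the momentum equation against $\nabla\Delta^{-1}\varrho_n$, applying It\^o's product formula, and verifying that the resulting ``stochastic commutator'' terms vanish in the limit; together with a div--curl type compensated compactness argument this yields the weak sequential continuity of the effective viscous flux $p(\varrho_n)-(\lambda+\tfrac43\delta)\Div\bfu_n$, after which monotonicity of the pressure and the renormalized continuity equation force $\varrho_n\to\varrho$ strongly in $L^1$. Controlling the noise inside this scheme---in particular ensuring that the It\^o corrections are compatible with the weak limits and that the renormalized continuity equation survives the passage to the limit on the new probability space---is the technically heaviest part and constitutes the core of the proof in \cite{BrHo}; the remaining steps are, by comparison, routine adaptations of the deterministic theory.
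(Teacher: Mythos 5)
Your proposal is correct and matches the standard argument: the paper itself does not prove Theorem~\ref{thm:Romeo1} but cites it from \cite[Theorem 1]{BrHo} (see also \cite[Chapter 4]{BrFeHobook}), and the proof given there is exactly the multi-level Lions--Feireisl scheme you describe (Faedo--Galerkin plus artificial viscosity and artificial pressure, It\^o energy estimates with BDG and Gronwall, tightness and Jakubowski--Skorokhod representation, martingale/cross-variation identification of the stochastic integral, and the effective viscous flux with renormalized continuity equation to obtain strong convergence of the density). Nothing essential is missing from your outline relative to that reference.
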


%%%%%%%%%%%%%%%%%%%%%%%%%%%%%%%%%%%%%%%%%%%%%%
\subsection{Measure-valued solutions}
To introduce the notion of measure-valued solution, it is more convenient to reformulate the problem \eqref{P1}--\eqref{P2} in the \emph{conservative} variables $\varrho$ and ${\bf m}= \varrho {\bf u}$:
\begin{align} \label{P11}
\D \vr + \Div {\bf m} \dt &= 0,\\ \label{P21}
\D {\bf m} + \left[ \Div \bigg(\frac{{\bf m} \otimes {\bf m}}{\varrho}\bigg)+  \Grad p(\vr) \right]  \dt  &=  \mathbb{G} (\vr, {\bf m}) \,\D W.
\end{align}
Note that, in general, the energy inequality \eqref{EI2''} seems to be the only source of {\it a priori} bounds. However, it is well known that uniformly bounded sequence in $L^1(\T^3)$, in general, does not imply weak convergence of it due to the presence of oscillations and concentration effects. In fact, to accommodate the above mentioned singularities two kinds of tools are used:
\begin{itemize}
\item the Young measures, which are probability measures on the phase space and account for the persistence of oscillations in the solution;
\item concentration defect measures, which are measures on physical space-time, account for blow up type collapse due to possible concentration points.
\end{itemize}

%%%%%%%%%%%%%%%%%%%%%%%%%%%%%%%%%%%%%%%%%%%%%%%%

\subsubsection{Dissipative measure-valued martingale solutions }
	
Motivated by the previous discussion, we are ready to introduce the concept of \textit{dissipative measure--valued martingale solution} to the stochastic compressible Euler system. In what follows, let 
\[
\mathcal{S} = \left\{ [\vr, \vm] \ \Big| \ \vr \geq 0, \ \vm \in \R^3 \right\}
\]
be the phase space associated to the Euler system. 
%For simplicity, we shall use the following notations: $\Theta:= [0,T] \times \T^3 \times \Omega$, and $\theta:=(t,x,\omega)$.

\begin{Definition}[Dissipative measure-valued martingale solution]
\label{def:dissMartin}
Let $\Lambda$ be a Borel probability measure on $L^\gamma(\mathbb{T}^3)\times L^{\frac{2\gamma}{\gamma+1}}(\mathbb{T}^3)$. Then $\big[ \big(\Omega,\mathfrak{F}, (\mathfrak{F}_{t})_{t\geq0},\mathbb{P} \big); \mathcal{V}^{\omega}_{t,x}, W \big]$ is a dissipative measure-valued martingale solution of \eqref{P11}--\eqref{P21}, with initial condition $\mathcal{V}^{\omega}_{0,x}$; if
\begin{enumerate}[(a)]
\item $\mathcal{V}^{\omega}$ is a random variable taking values in the space of Young measures on $L^{\infty}_{w^*}\big([0,T] \times \T^3; \mathcal{P}\big(\mathcal{S})\big)$. In other words, $\p$-a.s.
$\mathcal{V}^{\omega}_{t,x}: (t,x) \in [0,T] \times \T^3  \rightarrow \mathcal{P}(\mathcal{S})$ is a parametrized family of probability measures on $\mathcal{S}$,
\item $ \big(\Omega,\mathfrak{F}, (\mathfrak{F}_{t})_{t\geq0},\mathbb{P} \big)$ is a stochastic basis with a complete right-continuous filtration,
\item $W$ is a $(\mathfrak{F}_{t})$-cylindrical Wiener process,
\item the average density $\langle \mathcal{V}^{\omega}_{t,x}; \varrho \rangle$ satisfies $t\mapsto \langle \langle \mathcal{V}^{\omega}_{t,x}; \varrho \rangle(t, \cdot),\phi\rangle\in C[0,T]$ for any $\phi\in C^\infty(\mathbb{T}^3)$ $\mathbb{P}$-a.s., the function $t\mapsto \langle \langle \mathcal{V}^{\omega}_{t,x}; \varrho \rangle(t, \cdot),\phi\rangle$ is progressively measurable and %$\rho\in L^p\big(\T^3; L^\infty\big( 0,T;L^\gamma_{\mathrm{loc}}(\mathbb{R}^n)\big)\big)$ $\mathbb{P}$-a.s 
\begin{align*}
\mathbb{E}\, \bigg[ \sup_{t\in(0,T)}\Vert  \langle \mathcal{V}^{\omega}_{t,x}; \varrho \rangle(t,\cdot)\Vert_{L^\gamma(\mt)}^p\bigg]<\infty 
\end{align*}
for all $1\leq p<\infty$,
\item the average momentum $\langle \mathcal{V}^{\omega}_{t,x}; \textbf{m} \rangle$ satisfies  $t\mapsto \langle \langle \mathcal{V}^{\omega}_{t,x}; \textbf{m} \rangle (t, \cdot),\bm\varphi\rangle\in C[0,T]$ for any $\bm{\varphi}\in C^\infty(\mathbb{T}^3)$ $\mathbb{P}$-a.s., the function $t\mapsto \langle \langle \mathcal{V}^{\omega}_{t,x}; \textbf{m} \rangle (t, \cdot),\bm{\phi}\rangle$ is progressively measurable and 
%$\rho\mathbf{v}\in L^p\big(\T^3; L^\infty\big( 0,T;L^\frac{2\gamma}{\gamma+1}_{\mathrm{loc}}(\mathbb{R}^n)\big)\big)$ $\mathbb{P}$-a.s 
\begin{align*}
\mathbb{E}\, \bigg[ \sup_{t\in(0,T)}\Vert  \langle \mathcal{V}^{\omega}_{t,x}; \textbf{m} \rangle (t, \cdot) \Vert_{L^\frac{2\gamma}{\gamma+1}(\mt)}^p\bigg]<\infty 
\end{align*}
for all $1\leq p<\infty$,
\item $\Lambda=\mathcal{L}[\mathcal{V}^{\omega}_{0,x}]$,
\item the integral identity
\begin{equation} 
\begin{aligned}\label{first condition measure-valued solution}
&\int_{\T^3} \langle \mathcal{V}^{\omega}_{\tau,x}; \varrho \rangle \, \varphi (\tau, \cdot) \,dx -\int_{\T^3} \langle \mathcal{V}^{\omega}_{0,x}; \varrho \rangle\, \varphi (0,\cdot)\,dx 
= \int_{0}^{\tau} \int_{\T^3}  \langle \mathcal{V}^{\omega}_{t,x}; \textbf{m}\rangle \cdot \nabla_x \varphi \,dx \,dt,
\end{aligned}
\end{equation}
holds $\p$-a.s., for all $\tau \in[0,T)$, and for all $\varphi \in C^{\infty}(\T^3)$,
\item there exists a $W^{-m,2}(\T^3)$-valued square integrable continuous martingale $M^1_{E}$, such that the integral identity 
\begin{equation} \label{second condition measure-valued solution}
\begin{aligned}
&\int_{\T^3} \langle \mathcal{V}^{\omega}_{\tau,x};\textbf{m}\rangle \cdot \bm{\varphi}(\tau, \cdot) dx - \int_{\T^3} \langle \mathcal{V}^{\omega}_{0,x};\textbf{m}\rangle \cdot \bm{\varphi}(0,\cdot) dx \\
&\qquad = \int_{0}^{\tau} \int_{\T^3} \left[\left\langle \mathcal{V}^{\omega}_{t,x}; \frac{\textbf{m}\otimes \textbf{m}}{\varrho} \right\rangle: \nabla_x \bm{\varphi} + \langle \mathcal{V}^{\omega}_{t,x};p(\varrho)\rangle \divv_x \bm{\varphi} \right] dx dt \\
&\qquad \qquad+ \int_0^{\tau}\int_{\T^3} \bm{\varphi} \,dx\, dM^1_{E}(t) + \int_{0}^{\tau} \int_{\T^3}  \nabla_x \bm{\varphi}: d\mu_m,
\end{aligned}
\end{equation}
holds $\p$-a.s., for all $\tau \in [0,T)$, and for all $\bm{\varphi} \in C^{\infty}(\T^3;\mathbb{R}^3)$, where $\mu_m\in L^{\infty}_{w^*}\big([0,T]; \mathcal{M}_b({\T^3} )\big)$, $\p$-a.s., is a tensor--valued measure,
\item there exists a real-valued square integrable continuous martingale $M^2_{E}$, such that the following energy inequality
\begin{align} \label{third condition measure-valued solution}
\begin{aligned}
\mathcal{E}(t+) & \leq  \mathcal{E}(s-) \\
&\quad + \frac{1}{2} \int_s^{t} \bigg(\int_{\T^3} \sum_{k = 1}^{\infty} \left\langle \mathcal{V}^{\omega}_{\tau,x};\varrho^{-1}| \mathbf{G}_k (\varrho, \textbf{m}) |^2 \right\rangle \bigg)\, d\tau + \frac12\int_s^{t} \int_{\T^3} d \mu_e + \int_s^{t}  dM^2_{E} 
\end{aligned}			
\end{align}
holds $\p$-a.s., for all $0 \le s <t$ in $(0,T)$ with
\begin{align*}
&\mathcal{E}(t-):=\lim_{\tau \rightarrow 0+} \frac{1}{\tau} \int_{t -\tau}^t \Bigg(\int_{\T^3} \left\langle \mathcal{V}^{\omega}_{s,x}; \frac{1}{2} \frac{|\textbf{m}|^2}{\varrho} +P(\varrho) \right \rangle \,dx + \mathcal{D}(s)\Bigg) \,ds\\
&\mathcal{E}(t+):=\lim_{\tau \rightarrow 0+} \frac{1}{\tau} \int_t^{t +\tau} \Bigg(\int_{\T^3} \left\langle \mathcal{V}^{\omega}_{s,x}; \frac{1}{2} \frac{|\textbf{m}|^2}{\varrho} +P(\varrho) \right \rangle \,dx + \mathcal{D}(s)\Bigg) \,ds 
\end{align*}
Here $\mu_e\in L^{\infty}_{w^*}\big([0,T]; \mathcal{M}_b({\T^3} )\big)$, $\p$-almost surely,  $\mathcal{D}\geq 0$, $\p$-almost surely, and $\E \big[ \esssup_{t \in (0,T)}\mathcal{D}(t)\big] < \infty$, with initial energy
$$
\mathcal{E}(0-)= 
\int_{\mathbb{T}^3} \bigg(\frac{1}{2} \frac{|\textbf{m}_0|^2}{\varrho_0} +P(\varrho_0) \bigg)\,\dx.
$$
%	Here
%	$M^2_E$ is a real-valued martingale satisfying
%	\[
%	\E\bigg[ \sup_{t \in [0,T]} |M^2_E|^p \bigg] \leq c(p) \bigg( 1 + \expe{\intTor{ \left( \frac{|\textbf{m}(0, \cdot)|^2 }{2 \vr(0, \cdot)} +  P(\vr(0, \cdot)) \bigg) } }^p
%	\right)
%	\]
%	for any $1 \leq p < \infty$,
\item there exists a constant $C>0$ such that
\begin{equation} \label{fourth condition measure-valued solutions}
 \int_{0}^{\tau} \int_{\T^3} d|\mu_m| + \int_{0}^{\tau} \int_{\T^3} d|\mu_e| \leq C \int_{0}^{\tau} \mathcal{D}(t) \,\dt,
\end{equation}	
holds $\p$-a.s., for every $\tau \in (0,T)$. 
\item Given any random stochastic process $f$ adapted to $(\mathfrak{F}_{t})_{t\geq0}$ given by
$$
\mathrm{d}f  = D^df\,\mathrm{d}t  + \mathbb{D}^s f\,\mathrm{d}W,
$$
satisfying
\begin{equation*} 
f \in C([0,T]; W^{1,q} \cap C (\tor)), \quad 
\E\bigg[ \sup_{t \in [0,T]} \| f \|_{W^{1,q}}^2 \bigg]^q < \infty, \quad \text{$\mathbb{P}$-a.s. for all }\ 1 \leq q < \infty,
\end{equation*}
and
\begin{align*}
&D^d f \in L^q(\T^3;L^q(0,T;W^{1,q}(\mt))),\quad
\mathbb D^s f \in L^2(\T^3;L^2(0,T;L_2(\mathfrak U;L^2(\tor)))),\nonumber\\
&\bigg(\sum_{k\geq 1}|\mathbb D^s f(e_k)|^q\bigg)^\frac{1}{q} \in L^q(\T^3;L^q(0,T;L^{q}(\mt))),
\end{align*}
the cross variation between $f$ and the square integrable continuous martingale $M^1_E$ is given by 
\begin{align*}
\Big<\hspace{-0.14cm}\Big<f(t),  M^1_{E}(t)  \Big>\hspace{-0.14cm}\Big> 
= \sum_{i,j} \Bigg(\sum_{k = 1}^{\infty}  \int_0^t  \left \langle \left\langle \mathcal{ V}^{\omega}_{s,x}; \mathbf{G}_k (\varrho, {\bf m})\right\rangle, g_i \right \rangle \, \left \langle \mathbb{D}^s f(e_k), g_j \right\rangle  \,ds \Bigg) g_i \otimes g_j,
\end{align*}
where $g_j$'s are orthonormal basis for the space $W^{-r,2}(\T^N)$, for some $r>0$, and the bracket $\big<\cdot, \cdot \big>$ denotes inner product in the same space.
\end{enumerate}
\end{Definition}	

\begin{Remark}
Since our interest is directed to the weak (measure-valued)-strong uniqueness principle for dissipative measure-valued solution, the regularity assumptions in the item (k) on the process $f$ are indeed taylor made to successfully demonstrate the same principle for stochastic Euler equations, by making use of the relative energy inequality.
\end{Remark}

\begin{Remark}
Note that, in view of a standard Lebesgue point argument applied to \eqref{third condition measure-valued solution}, energy inequality holds for a.e. $0 \le s <t$ in $(0,T)$: 
\begin{equation}
\begin{aligned}
\label{energy_001}
	&  \int_{\T^3} \left\langle \mathcal{V}^{\omega}_{t,x}; \frac{1}{2} \frac{|\textbf{m}|^2}{\varrho} +P(\varrho) \right \rangle \,dx + \mathcal{D}(t)\\
	&\qquad \leq  \int_{\T^3}  \left\langle \mathcal{V}^{\omega}_{s,x}; \frac{1}{2} \frac{|\textbf{m}|^2}{\varrho} +P(\varrho)\right \rangle  \,dx + \mathcal{D}(s) + \frac{1}{2} \int_s^{t} \bigg(\int_{\T^3} \sum_{k = 1}^{\infty} \left\langle \mathcal{V}^{\omega}_{\tau,x};\varrho^{-1}| \mathbf{G}_k (\varrho, \textbf{m}) |^2 \right\rangle \bigg)\, d\tau \\
	& \qquad \qquad+ \frac12\int_s^{t} \int_{\T^3} d \mu_e + \int_s^{t}  dM^2_{E}, \,\,\p-a.s.
\end{aligned}
\end{equation}
However, for technical reasons, as pointed out in Section~\ref{proof2}, we require energy inequality to hold for \emph{all} $s, t \in (0,T)$. This can be achieved following the argument depicted in Section~\ref{proof1}.
\end{Remark}

%%%%%%%%%%%%%%%%%%%%%%%%%%%%%%%%%%%%%%%%%%
\subsection{Statements of main results}

We now state main results of this paper. To begin with, regarding the existence of dissipative measure-valued martingale solutions, we have the following theorem.

\begin{Theorem} \label{thm:exist}
Suppose that $\gamma >\frac 32$ and $\vc{G}_k$ be continuously differentiable satisfying \eqref{FG2}. Let $(\varrho_{\varepsilon}, {\bf m}_{\varepsilon}= \varrho_{\varepsilon}\,{\bf u}_{\varepsilon})$ be a family of finite energy weak martingale solutions to the stochastic compressible Navier--Stokes system \eqref{P1NS}--\eqref{P2NS}. Let the corresponding initial data $\varrho_0, {\bf m}_{0}$, the initial law $\Lambda$, given on the space $L^\gamma (\T^3) \times L^{\frac{2 \gamma}{\gamma + 1}}(\T^3;R^3)$, be independent of $\varepsilon$ satisfying
\begin{align*}
\Lambda \Bigg\{(\varrho,\mathbf{q})\in L^\gamma(\mathbb{T}^3)\times L^{2\gamma/\gamma+1}(\mathbb{T}^3) \, :\, 0<M_1\leq \varrho\leq M_2,\, \mathbf{m}\vert_{\{\varrho=0\}}=0   \Bigg\}=1  
\end{align*}
with constants $0<M_1<M_2$. Furthermore, assume that the following moment estimate
\begin{align}\label{initial}
\int_{L^\gamma_x\times  L^{2\gamma/\gamma+1}_x} \bigg\Vert\frac{1}{2}\frac{\vert  \mathbf{m} \vert^2}{\varrho}+ \frac{ \varrho^\gamma}{\gamma-1}  \bigg\Vert^p_{L^1(\T^3)}\, \mathrm{d}\Lambda (\varrho,\mathbf{m})<\infty
\end{align}
holds for all $1\leq p<\infty$. 
Then the family ${\lbrace \varrho_{\varepsilon}, {\bf m}_{\varepsilon}  \rbrace}_{\varepsilon>0}$
generates, as $\varepsilon \mapsto 0$, a Young measure ${\lbrace \mathcal{V}^{\omega}_{t,x} \rbrace}_{t\in [0,T]; x\in \T^3}$ which is a dissipative measure-valued martingale solution to
the stochastic compressible Euler system \eqref{P1}--\eqref{P2}, in the sense of Definition~\ref{def:dissMartin}, with initial data $\mathcal{V}^{\omega}_{0,x}=\delta_{\varrho_{0}(x), {\bf m}_0(x)}$ $\p$-a.s.
\end{Theorem}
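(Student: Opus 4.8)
The plan is to run a stochastic compactness argument on the family of finite energy weak martingale solutions $(\varrho_\varepsilon,\mathbf m_\varepsilon)$ provided by Theorem~\ref{thm:Romeo1}, and to identify the limit with a dissipative measure-valued martingale solution. First I would collect the uniform-in-$\varepsilon$ bounds from the energy inequality \eqref{EI2''} together with the moment assumption \eqref{initial}: these give, after using the constancy of the mean density and a stochastic Gronwall/Burkholder--Davis--Gundy argument, bounds of the form $\mathbb{E}[\sup_t(\tfrac12\varrho_\varepsilon|\mathbf u_\varepsilon|^2+\tfrac{\varrho_\varepsilon^\gamma}{\gamma-1})_{L^1}^p]\le C$, $\mathbb{E}[\varepsilon\int_0^T\|\nabla\mathbf u_\varepsilon\|_{L^2}^2]^p\le C$, uniformly in $\varepsilon$, for all $p<\infty$. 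From these I extract: $\varrho_\varepsilon$ bounded in $L^p_\omega L^\infty_t L^\gamma_x$, $\mathbf m_\varepsilon=\varrho_\varepsilon\mathbf u_\varepsilon$ bounded in $L^p_\omega L^\infty_t L^{2\gamma/(\gamma+1)}_x$, $\mathbf m_\varepsilon\otimes\mathbf u_\varepsilon$ and $p(\varrho_\varepsilon)$ bounded in suitable $L^1$-type spaces; the $\varepsilon\,\Div\mathbb S(\nabla\mathbf u_\varepsilon)$ term vanishes in $L^p_\omega L^2_t W^{-1,2}_x$ like $\sqrt\varepsilon$. Time regularity: testing \eqref{eq:energy} with smooth $\phi,\bm\varphi$ and using the noise bounds \eqref{FG2} via BDG gives, for the projections onto finitely many Fourier modes, fractional Sobolev bounds $\mathbb{E}\|\langle\varrho_\varepsilon,\phi\rangle\|_{W^{\alpha,2}_t}^2+\mathbb{E}\|\langle\mathbf m_\varepsilon,\bm\varphi\rangle\|_{W^{\alpha,2}_t}^2\le C$ for some $\alpha\in(0,1/2)$, which combined with Lemma~\ref{comp} and Lemma~\ref{lemma01} yields tightness of the laws of $(\varrho_\varepsilon,\mathbf m_\varepsilon)$ in spaces like $C_t W^{-\ell,2}_x$ (weak-star-in-space valued), plus tightness of $W_\varepsilon$ in $C_t\mathfrak U_0$.

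Next I would set up the Young-measure compactness: the family $\mathbf U_\varepsilon=(\varrho_\varepsilon,\mathbf m_\varepsilon)$ (or rather the nonlinear compositions appearing in the equations and energy) generates, via Lemma~\ref{lem001}, a random Young measure $\{\mathcal V^\omega_{t,x}\}$ on the phase space $\mathcal S$ on a new probability space, after invoking a Jakubowski--Skorokhod representation to get a.s.\ convergence of a subsequence $(\tilde\varrho_\varepsilon,\tilde{\mathbf m}_\varepsilon,\tilde W_\varepsilon)$. Simultaneously the $L^1$-bounded nonlinear quantities $\tfrac{\mathbf m_\varepsilon\otimes\mathbf m_\varepsilon}{\varrho_\varepsilon}$, $p(\varrho_\varepsilon)$, the kinetic+internal energy density, and $\varrho_\varepsilon^{-1}|\mathbf G_k(\varrho_\varepsilon,\mathbf m_\varepsilon)|^2$ converge weak-star in $\mathcal M_b$ to $\langle\mathcal V^\omega;\,\cdot\,\rangle\,dx\,dt$ plus concentration defect measures; this is where Lemma~\ref{lemma001} is used to produce $\mu_m$, $\mu_e$ and the defect $\mathcal D$ with $|\mu_m|+|\mu_e|\le C\int\mathcal D$, i.e.\ item (j) of Definition~\ref{def:dissMartin}. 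Passing to the limit in the weak formulation \eqref{eq:energy} of the continuity and momentum equations then yields \eqref{first condition measure-valued solution} and \eqref{second condition measure-valued solution}, where the limiting stochastic integral is only known to be \emph{some} square-integrable continuous martingale $M^1_E$ in $W^{-m,2}$, rather than an explicit It\^o integral; likewise the limit of the martingale part of the energy inequality gives $M^2_E$, and lower semicontinuity of the energy under weak-star convergence (together with the $\mathcal D$ term absorbing the defect) yields \eqref{third condition measure-valued solution}, after upgrading to all $s<t$ by the regularization procedure referenced in the remark. The initial law is identified as $\Lambda=\mathcal L[\mathcal V^\omega_{0,x}]$ with $\mathcal V^\omega_{0,x}=\delta_{\varrho_0,\mathbf m_0}$ because the approximate initial data are fixed atoms independent of $\varepsilon$.

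The main obstacle — and the conceptual novelty flagged in the introduction — is item (k): since one cannot identify $M^1_E$ as a genuine stochastic integral against $W$ (there is no strong compactness to pass to the limit inside $\mathbb G$, and results such as those of Debussche et al.\ do not apply), one must instead \emph{characterize $M^1_E$ through its cross variation} with smooth It\^o processes $f$. I would do this by a martingale/quadratic-variation argument in the spirit of the stochastic compactness method: for the approximations, $\langle\!\langle f_\varepsilon(t),M^{1,\varepsilon}_E(t)\rangle\!\rangle$ is computed explicitly from It\^o's product rule as $\sum_k\int_0^t\langle\mathbb G(\varrho_\varepsilon,\mathbf m_\varepsilon)e_k,\cdot\rangle\langle\mathbb D^s f_\varepsilon(e_k),\cdot\rangle\,ds$; using the regularity of $f$ assumed in (k), the summability $\sum_k\beta_k<\infty$ from \eqref{FG2}, and the a.s.\ convergences from the representation theorem, I pass to the limit to obtain exactly the formula stated in (k) with $\langle\mathbf G_k(\varrho_\varepsilon,\mathbf m_\varepsilon)\rangle\to\langle\mathcal V^\omega;\mathbf G_k(\varrho,\mathbf m)\rangle$. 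Making this limit passage rigorous — verifying that the bilinear pairing against the test-function-dependent basis $\{g_j\}$ of $W^{-r,2}$ is stable under the available weak convergences, and that the martingale identification (showing $M^1_E$, $M^2_E$ are martingales with respect to the limiting filtration via a standard argument with finitely many times and bounded continuous functionals) is compatible with the cross-variation identity — is the technical heart of the proof; the rest is a by-now-standard, if lengthy, adaptation of the deterministic measure-valued limit of Feireisl et al.\ \cite{Fei01} combined with the stochastic compactness machinery of Breit--Hofmanov\'a \cite{BrHo,BrFeHobook}.
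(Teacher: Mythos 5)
Your proposal is correct and follows essentially the same route as the paper: uniform bounds from the energy inequality, tightness and Jakubowski--Skorokhod representation, Young measures plus concentration defect measures (via Lemma~\ref{lemma001}) to produce $\mu_m$, $\mu_e$, $\mathcal{D}$ and the compatibility condition, limit passage in the weak formulation and energy inequality with the martingales $M^1_E$, $M^2_E$ identified only as abstract limits, and finally the characterization of $M^1_E$ through its cross variation with smooth It\^o processes exactly as in the paper's Lemma~\ref{rhoutight1311}. The only cosmetic difference is your choice of path spaces of type $C_tW^{-\ell,2}_x$ where the paper works with $C_w([0,T];L^\gamma)$ and its companions, which does not affect the argument.
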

\begin{Remark}
We remark that the assumption $\gamma >\frac 32$ in the above Theorem~\ref{thm:exist} is merely technical and a manifestation of our particular choice of approximate equations, i.e., Navier--Stokes equations, for which existence result is available only for $\gamma >\frac 32$. However, for our purpose, we only need an approximate solution (which works for all $\gamma >1$ ) with appropriate energy bounds, e.g, one may consider ``multipolar fluid'' type approximation \cite{kroner}. In this way, we can exhibit existence of mesure-valued solution to (\ref{P1})--(\ref{P2}) for all $\gamma >1$. 
\end{Remark}

We then establish the following weak (measure-valued)-strong uniqueness principle:
\begin{Theorem}[Weak-Strong Uniqueness] \label{Weak-Strong Uniqueness}
Let $\big[ \big(\Omega,\mathfrak{F}, (\mathfrak{F}_{t})_{t\geq0},\mathbb{P} \big); \mathcal{V}^{\omega}_{t,x}, W \big]$ be a dissipative measure-valued martingale solution to the system \eqref{P1}--\eqref{P2}. On the same stochastic basis $\big(\Omega,\mathfrak{F}, (\mathfrak{F}_{t})_{t\geq0},\mathbb{P} \big)$, let us consider the unique maximal strong pathwise solution to the Euler system \eqref{P1}--\eqref{P2} given by 
$(\bar{\varrho},\bar{{\bf u}},(\mathfrak{t}_R)_{R\in\mn},\mathfrak{t})$ driven by the same cylindrical Wiener process $W$ with the initial data $\bar{\varrho}(0), \bar{\varrho}\bar{\bf u}(0)$ satisfies 
\begin{equation*} 
\mathcal{V}^{\omega}_{0,x}= \delta_{\bar{\varrho}(0,x),(\bar{\varrho}\bar{\bf u})(0,x)}, \,\p-\mbox{a.s.,}\, \mbox{for a.e. } x \in \T^3.
\end{equation*}
Then for a.e. $t\in [0,T]$, $\mathcal{D}(t \wedge \mathfrak{t})=0$, $\p$-a.s., and 
for a.e. $(t,x) \in [0,T] \times \T^3$
\begin{equation*}
\mathcal{V}^{\omega}_{t \wedge \mathfrak{t},x}	= \delta_{\bar{\varrho}(t \wedge \mathfrak{t},x), (\bar{\varrho}\bar{\bf u})(t \wedge \mathfrak{t},x)}, \,\p-\mbox{a.s.}
\end{equation*}
\end{Theorem}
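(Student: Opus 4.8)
The plan is to adapt the deterministic relative-energy / weak--strong uniqueness strategy of Feireisl et al.\ (for measure-valued solutions) to the stochastic setting, using the relative energy functional built from the measure-valued solution $\mathcal V^\omega$ and the strong solution $(\bar\varrho,\bar{\bf u})$. Concretely, I would introduce the relative energy
\[
\mathcal E_{\mathrm{rel}}\big(\mathcal V^\omega \,\big|\, \bar\varrho,\bar{\bf u}\big)(\tau)
= \int_{\T^3}\Big\langle \mathcal V^\omega_{\tau,x};\tfrac12\varrho|{\bf u}-\bar{\bf u}|^2 + P(\varrho)-P'(\bar\varrho)(\varrho-\bar\varrho)-P(\bar\varrho)\Big\rangle\,\dx + \mathcal D(\tau),
\]
where $\varrho{\bf u}={\bf m}$ under the measure, and then derive a Gronwall-type inequality for $\E[\mathcal E_{\mathrm{rel}}(\tau\wedge\mathfrak t_R)]$. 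The first step is to expand $\mathcal E_{\mathrm{rel}}$ by combining the energy inequality \eqref{third condition measure-valued solution}/\eqref{energy_001}, the continuity identity \eqref{first condition measure-valued solution} tested against $P'(\bar\varrho)$ and $\tfrac12|\bar{\bf u}|^2$, and the momentum identity \eqref{second condition measure-valued solution} tested against $\bar{\bf u}$; for the terms involving $\bar\varrho,\bar{\bf u}$ one applies the It\^o formula to the smooth process, which is legitimate since $(\bar\varrho,\bar{\bf u})$ is a genuine strong pathwise solution on $[0,\mathfrak t_R]$ and enjoys $W^{s,2}$-regularity with $s>7/2$.

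The delicate point — and the place where the paper's new definition earns its keep — is the stochastic It\^o correction terms. Expanding $d(\tfrac12\langle\mathcal V;\varrho\rangle|\bar{\bf u}|^2)$ and $d(\langle\mathcal V;{\bf m}\rangle\cdot\bar{\bf u})$ produces, besides the martingale pieces $dM^1_E,dM^2_E$ and the Nemytskij noise-square terms, a cross-variation term $d\langle\!\langle \bar{\bf u}, M^1_E\rangle\!\rangle$. This is precisely the ``noise--noise interaction'' obstacle: since one cannot identify the limit martingale $M^1_E$ as an It\^o integral against $W$, one cannot compute this cross variation by the usual rules. Here item (k) of Definition~\ref{def:dissMartin} supplies exactly the needed formula,
\[
\big\langle\!\big\langle \bar{\bf u}, M^1_E\big\rangle\!\big\rangle
= \sum_{i,j}\Big(\sum_{k\ge1}\int_0^t\langle\langle\mathcal V^\omega_{s,x};{\bf G}_k(\varrho,{\bf m})\rangle,g_i\rangle\,\langle\mathbb D^s\bar{\bf u}(e_k),g_j\rangle\,ds\Big)g_i\otimes g_j,
\]
and with $\mathbb D^s\bar{\bf u}(e_k)=\varrho^{-1}{\bf G}_k(\bar\varrho,\bar\varrho\bar{\bf u})$ (read off from the strong momentum equation rewritten for $\bar{\bf u}$), this cross term combines with the two noise-square terms to form a perfect square $\tfrac12\sum_k\int\langle\mathcal V;\varrho^{-1}|{\bf G}_k(\varrho,{\bf m})-\tfrac{\varrho}{\bar\varrho}{\bf G}_k(\bar\varrho,\bar\varrho\bar{\bf u})|^2\rangle$, up to terms controlled by $\mathcal E_{\mathrm{rel}}$ via the Lipschitz assumption \eqref{FG2}. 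Care is needed to interpret $\bar{\bf u}$ in the right Hilbert space $W^{-r,2}$ so that the pairing in (k) makes sense, and to pass from the formal $\bar{\bf u}$-test to a rigorous It\^o product rule $d(\langle\mathcal V;{\bf m}\rangle\cdot\bar{\bf u})$; the first maneuver is to prove the It\^o product formula for the $W^{-m,2}$-valued martingale against the smooth $W^{s,2}$-valued $\bar{\bf u}$, which is where the cross-variation hypothesis is invoked.

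Once the relative energy inequality is in hand in the schematic form
\[
\E\big[\mathcal E_{\mathrm{rel}}(\tau\wedge\mathfrak t_R)\big]
\le \E\big[\mathcal E_{\mathrm{rel}}(0)\big] + C\,\E\!\int_0^{\tau\wedge\mathfrak t_R}\!\!\big(\text{terms}\lesssim \mathcal E_{\mathrm{rel}}(s)\big)\,ds,
\]
with the remainder terms (pressure flux, convective flux, concentration measures $\mu_m,\mu_e$ bounded via \eqref{fourth condition measure-valued solutions} by $\int\mathcal D$, and the expectation of the martingale terms vanishing), I would close by Gronwall: since $\mathcal E_{\mathrm{rel}}(0)=0$ by the assumption $\mathcal V^\omega_{0,x}=\delta_{\bar\varrho(0,x),(\bar\varrho\bar{\bf u})(0,x)}$, it follows $\E[\mathcal E_{\mathrm{rel}}(\tau\wedge\mathfrak t_R)]=0$ for all $\tau$, hence $\mathcal E_{\mathrm{rel}}(\tau\wedge\mathfrak t_R)=0$ $\p$-a.s. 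Non-negativity of each summand (here one uses strict convexity of $P$ and the coercivity of the relative energy integrand on $\mathcal S$) forces $\mathcal D(\tau\wedge\mathfrak t_R)=0$ and $\mathcal V^\omega_{\tau\wedge\mathfrak t_R,x}=\delta_{(\bar\varrho,\bar\varrho\bar{\bf u})(\tau\wedge\mathfrak t_R,x)}$ for a.e.\ $(\tau,x)$, $\p$-a.s.; finally let $R\to\infty$ using $\mathfrak t_R\uparrow\mathfrak t$ to obtain the claim on $[0,\mathfrak t]$, i.e.\ at $t\wedge\mathfrak t$. I expect the main obstacle to be exactly the rigorous justification of the It\^o product rule and the algebraic assembly of the perfect square from the cross-variation term in (k); the rest is a (lengthy but) standard relative-energy computation plus Gronwall.
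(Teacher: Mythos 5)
Your proposal follows essentially the same route as the paper: derive a relative energy inequality from the energy inequality \eqref{third condition measure-valued solution} together with the continuity/momentum identities via an It\^o product rule (the paper's Lemma \ref{lem} and Proposition \ref{relen}), use item (k) of Definition \ref{def:dissMartin} with $\mathbb D^s_t\mathbf U=\bar\varrho^{-1}\mathbb G(\bar\varrho,\bar\varrho\bar{\bf u})$ to identify the cross variation and assemble the perfect-square noise term, control the defect measures by \eqref{fourth condition measure-valued solutions} and the remainder by $c(R)\,\mathcal{E}_{\mathrm{mv}}$ (the paper does this via the density splitting $\{\varrho\le\bar\varrho/2\}$, $\{\bar\varrho/2<\varrho<2\bar\varrho\}$, $\{\varrho\ge2\bar\varrho\}$), and conclude by Gronwall, zero initial relative energy, and a Lebesgue point argument on $[0,\mathfrak t_R]$, then $R\to\infty$. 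This matches the paper's proof in both structure and key ideas.
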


%%%%%%%%%%%%%%%%%%%%%%%%%%%%%%%%%%%%%%%%%%%%%%%%%%%%%%%%%%%%%%%%%%%%%%%%%%%

\section{Proof of Theorem~\ref{thm:exist}}
\label{proof1}

The crucial argument used in this proof is based on a compactness method where one needs uniform estimates to demonstrate tightness results and this yields the convergence of the approximate sequence on another probability space and the existence of dissipative measure-valued martingale solution follows, thanks to Young measure theory. Observe that the existence of a pathwise solution (typically obtained by  Gy\"{o}ngy-Krylov's characterization of convergence in probability) seems not possible due to the lack of pathwise uniqueness for the approximate problem (\ref{P1NS})--(\ref{P2NS}).

Let us first note that, thanks to the Theorem~\ref{thm:Romeo1}, the existence of finite energy weak martingale solution of stochastic Navier--Stokes system (\ref{P1NS})--(\ref{P2NS})
$$\big[ \big(\Omega,\mathfrak{F}, (\mathfrak{F}_{{\varepsilon},t})_{t\geq0},\mathbb{P} \big);\varrho_{\varepsilon},\mathbf{u}_{\varepsilon}, W\big]$$
is well established. As alluded to before, lack of pathwise uniqueness for (\ref{P1NS})--(\ref{P2NS}) necessitates the filtration to depend on $\varepsilon$, while the probability space and the Brownian motion can be chosen independent of $\varepsilon$.
The functions $\varrho_\varepsilon$ and $\bfu_\varepsilon$ satisfy the energy inequality, i.e. for any $1 \le p<\infty$ we have 
\begin{align}
&\stred\bigg[\sup_{0\leq t\leq T}\int_{\mt}\Big(\frac{1}{2}\frac{|{\bf m}_\varepsilon|^2}{\varrho_\varepsilon}+\frac{1}{\gamma-1}\varrho^\gamma_\varepsilon\Big)\,\dif x\label{eq:apriorivarepsilon}\\
& \qquad + \varepsilon \int_0^T\int_{\mt}\delta|\nabla_x\bu_\varepsilon|^2+(\lambda+\delta)|\diver_x\bu_\varepsilon|^2\,\dif x\,\dif s\bigg]^p\nonumber\\
& \qquad \qquad \leq \,C_p\bigg(1+\int_{L^\gamma\times L^\frac{2\gamma}{\gamma+1}}\bigg\|\frac{1}{2}\frac{|{\bf m}|^2}{\varrho}+\frac{1}{\gamma-1}\varrho^\gamma\bigg\|_{L^1}^p\,\dif \Lambda(\varrho,{\bf m})\bigg)\leq C(p,T).\nonumber
\end{align}
For a proof of \eqref{eq:apriorivarepsilon}, we refer to the paper by Breit $\&$ Hofmanova \cite{BrHo}. The above relation leads to the following uniform bounds
\begin{align}
 \bfu_\varepsilon&\in L^{p}(\Omega;L^2(0,T;W^{1,2}( \mathbb T^3))),\label{apv}\\
\frac{{\bf m}_\varepsilon}{\sqrt{ \varrho_\varepsilon}} &\in L^{p}(\Omega;L^\infty(0,T;L^2( \mathbb T^3))),\label{aprhov}\\
 \varrho_\varepsilon&\in L^{p}(\Omega;L^\infty(0,T;L^\gamma( \mathbb T^3))),\label{aprho}\\
  {\bf m}_\varepsilon &\in L^{p}(\Omega;L^\infty(0,T;L^\frac{2\gamma}{\gamma+1}( \mathbb T^3))),\label{estrhou2}\\
\frac{{\bf m}_\varepsilon \otimes{\bf m}_\varepsilon}{\varrho_\varepsilon}&\in L^p(\Omega;L^2(0,T;L^\frac{6\gamma}{4\gamma+3}(\mt))).\label{est:rhobfu22}
\end{align}
%Besides, testing the density equation by $\varrho_\varepsilon$ gives
%\begin{align}\label{est:nablarho}
%\sqrt{\varepsilon}\nabla\varrho_\varepsilon\in L^p(\Omega;L^2(0,T;L^2(\mt)))
%\end{align}
%and consequently
%\begin{align}
%\varepsilon\nabla\varrho_\varepsilon\rightarrow 0\quad\text{in}\quad L^2(\Omega\times Q),\label{eq:van1}\\
%\varepsilon\nabla\bfu_\varepsilon\nabla\varrho_\varepsilon\rightarrow 0\quad\text{in}\quad L^1(\Omega\times Q).\label{eq:van2}
%\end{align}

\subsection{Compactness and almost sure representations}
\label{subsec:compactness}

It is well known that, without assuming any topological structure on $\Omega$, establishing a result of compactness in the probability variable ($\omega$-variable) is a non-trivial task. In what follows, to obtain strong (a.s.) convergence in the $\omega$-variable, we make use of Skorokhod-Jakubowski's representation theorem (cf. \cite{Jakubowski}), linked to tightness of probability measures and a.s. representations of random variables in quasi-Polish spaces adapted to this situation. Note that since our path spaces are not Polish, the classical Skorokhod theorem is not suitable; instead we use the Jakubowski-Skorokhod representation theorem \cite{Jakubowski}.

To establish the tightness of the laws generated by the approximations, we first denote 
the path space $\mathcal{X}$ to be the product of the following spaces:
\begin{align*}
\mathcal{X}_\varrho&=C_w([0,T];L^\gamma(\mt)),&\mathcal{X}_\bu&=\big(L^2(0,T;W^{1,2}(\mt)),w\big),\\
\mathcal{X}_{\varrho\bu}&=C_w([0,T];L^\frac{2\gamma}{\gamma+1}(\mt)),&\mathcal{X}_W&=C([0,T];\mathfrak{U}_0),\\
\mathcal{X}_{C} &= \big(L^{\infty}(0,T; \mathcal{M}_b(\T^3)), w^* \big),
&\mathcal{X}_{P} &= \big(L^{\infty}(0,T; \mathcal{M}_b(\T^3)), w^* \big),\\
\mathcal{X}_{E} &= \big(L^{\infty}(0,T; \mathcal{M}_b(\T^3)), w^* \big), &\mathcal{X}_{\mathcal{V}} &= \big(L^{\infty}((0,T)\times \T^3; \mathcal{P}(\R^4)), w^* \big),\\
\mathcal{X}_M&=\big(W^{\alpha, 2}(0,T; W^{-m,2}(\T^3)), w \big), & \mathcal{X}_N&=C([0,T]; \R),\\
\mathcal{X}_{D}&= \big(L^{\infty}(0,T; \mathcal{M}_b(\T^3)), w^* \big).
\end{align*}
Let us denote by $\mu_{\varrho_\varepsilon}$, $\mu_{\bu_\varepsilon}$, $\mu_{\varrho_\varepsilon\bu_\varepsilon}$, and $\mu_{W_\varepsilon}$ respectively, the law of $\varrho_\varepsilon$, $\bu_\varepsilon$, $\varrho_\varepsilon\bu_\varepsilon$, and $W_\varepsilon$ on the corresponding path space. Moreover, let $\mu_{M_\varepsilon}$, and $\mu_{N_\varepsilon}$ denote the law of two martingales $M_\varepsilon:=\int_0^t {\tn{G}}(\varrho_\varepsilon, \textbf{m}_\varepsilon )\,\Dif W$, and $N_\varepsilon:=\sum_{k\geq1}\int_0^t\intTor{ \vu_{\varepsilon} \cdot {\vc{G}_k}(\varrho_\varepsilon, \textbf{m}_\varepsilon ) } \,\Dif W$ on the corresponding path spaces. Furthermore, let $\mu_{C_\varepsilon}$, $\mu_{D_\varepsilon}$, $\mu_{E_\varepsilon}$, $\mu_{P_\varepsilon}$, and $\mu_{{\mathcal{V}}_\varepsilon}$ denote the law of 
\begin{align*}
& C_\varepsilon:= \frac{\textbf{m}_\varepsilon\otimes \textbf{m}_\varepsilon}{\varrho_\varepsilon}, \quad D_{\varepsilon}:= \sum_{k\geq1}\rho_{\varepsilon}^{-1}| \mathbf{G}_k (\rho_{\varepsilon}, \rho_{\varepsilon} {\bf v}_{\varepsilon}) |^2, \\
& E_\varepsilon :=\frac{1}{2}\frac{|{\bf m}_\varepsilon|^2}{\varrho_\varepsilon}+\frac{1}{\gamma-1}\varrho^\gamma_\varepsilon, \quad P_\varepsilon=  \varrho^\gamma_\varepsilon, \quad {\mathcal{V}}_\varepsilon := \delta_{[\varrho_\varepsilon, \textbf{m}_\varepsilon]},
\end{align*}
respectively, on the corresponding path spaces. Finally, let $\mu^\varepsilon$ denotes joint law of all the variables on $\mathcal{X}$. To proceed further, it is necessary to establish tightness of $\{\mu^\varepsilon;\,\varepsilon\in(0,1)\}$. To this end, we observe that tightness of $\mu_{W_\ep}$ is immediate. So we show tightness of other variables.

\begin{Proposition}\label{prop:rhotight}
The sets $\{\mu_{\varrho_\varepsilon};\,\varepsilon\in(0,1)\}$, $\{\mu_{\bu_\varepsilon};\,\varepsilon\in(0,1)\}$, and $\{\mu_{\varrho_\varepsilon\bu_\varepsilon};\,\varepsilon\in(0,1)\}$ are tight on $\mathcal{X}_\varrho$, $\mathcal{X}_\bu$, and $\mathcal{X}_{\varrho\bu}$ respectively. 
\end{Proposition}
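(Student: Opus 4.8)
The plan is to handle the three families separately and, in each case, to produce for every $\eta>0$ a compact subset of the corresponding path space carrying mass at least $1-\eta$ uniformly in $\varepsilon$; the input is the uniform moment bounds \eqref{apv}--\eqref{est:rhobfu22} (which all follow from the energy estimate \eqref{eq:apriorivarepsilon}, cf. \cite{BrHo}) together with Chebyshev's inequality, which turns an expectation bound into a statement about lying in a ball with large probability.

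The velocity is immediate: $\mathcal{X}_\bu=\big(L^2(0,T;W^{1,2}(\mt)),w\big)$ is the weak topology of a separable reflexive Banach space, so closed balls are weakly compact and metrizable; hence $B_R=\{u:\|u\|_{L^2(0,T;W^{1,2})}\le R\}$ is compact in $\mathcal{X}_\bu$, and by \eqref{apv} and Chebyshev $\prst(\bu_\varepsilon\notin B_R)\le R^{-p}\,\stred\|\bu_\varepsilon\|_{L^2(0,T;W^{1,2})}^p\le CR^{-p}$, which is arbitrarily small uniformly in $\varepsilon$ upon taking $R$ large and $p>1$.

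For $\varrho_\varepsilon$ (and $\mathbf{m}_\varepsilon:=\varrho_\varepsilon\bu_\varepsilon$) I would use the Arzel\`a--Ascoli-type characterization of relative compactness in spaces of weakly continuous functions (see \cite{BrFeHobook}): a family $\mathcal{K}\subset C_w([0,T];X)$, $X$ separable reflexive, is relatively compact provided $\sup_{f\in\mathcal{K}}\sup_{t}\|f(t)\|_X<\infty$ and, for $\phi$ in a dense subset of $X^*$, $\{t\mapsto\langle f(t),\phi\rangle:f\in\mathcal{K}\}$ is equicontinuous in $C([0,T])$. The first ingredient is \eqref{aprho}; the second I obtain from the continuity equation \eqref{P1NS}, which gives $\partial_t\varrho_\varepsilon=-\Div\mathbf{m}_\varepsilon$, and since (using $\gamma>3/2$) $\mathbf{m}_\varepsilon$ is bounded in $L^\infty(0,T;L^{2\gamma/(\gamma+1)}(\mt))\hookrightarrow L^\infty(0,T;W^{-1,2}(\mt))$, one gets $\varrho_\varepsilon\in W^{1,\infty}(0,T;W^{-2,2}(\mt))\hookrightarrow C^{0,1}([0,T];W^{-2,2}(\mt))$ with $\stred\|\varrho_\varepsilon\|_{C^{0,1}([0,T];W^{-2,2})}^p\le C$ for all $p$; testing with $\phi\in C^\infty(\mt)$ (dense in the dual) yields equicontinuity on the event $\{\|\varrho_\varepsilon\|_{C^{0,1}(W^{-2,2})}\le R\}$, whose complement has probability $\le CR^{-p}$. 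Thus $\varrho_\varepsilon$ lies in a fixed compact subset of $\mathcal{X}_\varrho$ with probability $\ge 1-CR^{-p}$.

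For the momentum the scheme is the same but the estimate of the time increments is the main obstacle. Writing, from \eqref{P2NS},
\[
\mathbf{m}_\varepsilon(t)=\mathbf{m}_\varepsilon(0)+\int_0^t\bfF_\varepsilon\,\dif s+\int_0^t\mathbb{G}(\varrho_\varepsilon,\mathbf{m}_\varepsilon)\,\dif W=:\mathbf{m}_\varepsilon(0)+I^1_\varepsilon(t)+I^2_\varepsilon(t),
\]
with $\bfF_\varepsilon=-\Div(\mathbf{m}_\varepsilon\otimes\bu_\varepsilon)-\Grad\varrho_\varepsilon^\gamma+\varepsilon\,\Div\mathbb{S}(\Grad\bu_\varepsilon)$, one estimates $\bfF_\varepsilon$ in a single space $W^{-k,2}(\mt)$ with $k>5/2$: the convective term via \eqref{est:rhobfu22} (noting $L^{6\gamma/(4\gamma+3)}(\mt)\hookrightarrow W^{-k+1,2}(\mt)$ since $6\gamma/(4\gamma+3)>1$), the pressure via $\varrho_\varepsilon^\gamma\in L^\infty(0,T;L^1(\mt))$, and the viscous term via the $\sqrt\varepsilon$-weighted $L^2_{t,x}$ bound on $\Grad\bu_\varepsilon$ coming from \eqref{eq:apriorivarepsilon}; this gives $\bfF_\varepsilon\in L^2(0,T;W^{-k,2}(\mt))$ with all moments bounded uniformly in $\varepsilon$, hence $I^1_\varepsilon$ is uniformly H\"older-$\tfrac12$ in $W^{-k,2}(\mt)$. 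For $I^2_\varepsilon$, assumption \eqref{FG2} together with $L^1(\mt)\hookrightarrow W^{-m,2}(\mt)$ ($m>3/2$) gives $\|\mathbb{G}(\varrho_\varepsilon,\mathbf{m}_\varepsilon)\|_{L_2(\mathfrak{U};W^{-m,2})}^2\le C\big(1+\|\varrho_\varepsilon\|_{L^1}^2+\|\mathbf{m}_\varepsilon\|_{L^1}^2\big)$, uniformly bounded in $L^\infty_t$ with all moments by \eqref{aprho}--\eqref{estrhou2}; the Burkholder--Davis--Gundy inequality then yields $\stred\|I^2_\varepsilon(t)-I^2_\varepsilon(s)\|_{W^{-m,2}}^q\le C_q|t-s|^{q/2}$, and the quantitative Kolmogorov criterion (Lemma~\ref{lemma01}), applied with $q$ large, gives a modification of $I^2_\varepsilon$ that is uniformly H\"older-$\lambda$ in $W^{-m,2}(\mt)$ for some $\lambda>0$ with $\stred\|I^2_\varepsilon\|_{C^{0,\lambda}([0,T];W^{-m,2})}^q\le C$. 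Combining this with the uniform bound \eqref{estrhou2} in $L^\infty(0,T;L^{2\gamma/(\gamma+1)}(\mt))$, restricting via Chebyshev to a ball in that space and to a ball in $C^{0,\lambda}([0,T];W^{-k',2}(\mt))$ with $k'=\max\{k,m\}$, and invoking the same $C_w$-compactness criterion gives tightness of $\{\mu_{\varrho_\varepsilon\bu_\varepsilon}\}$ on $\mathcal{X}_{\varrho\bu}$. The genuinely delicate points are thus purely analytic: choosing one negative-order Sobolev space that simultaneously absorbs the convective term (whose integrability degrades as $\gamma\downarrow 3/2$), the pressure, and the It\^o integral, and upgrading the $L^2_t$/BDG bounds into the uniform-in-$\varepsilon$ H\"older-in-time control required by the weak-continuity compactness criterion.
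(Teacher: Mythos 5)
Your proposal is correct and follows essentially the same route as the paper, which simply defers to the argument of Breit--Hofmanov\'a \cite{BrHo}: Chebyshev plus weak compactness of balls for $\mathcal{X}_\bu$, and for $\mathcal{X}_\varrho$, $\mathcal{X}_{\varrho\bu}$ the standard $C_w([0,T];X)$ compactness criterion combined with uniform $L^\infty_t$ bounds and time-increment (H\"older) estimates in a negative Sobolev space obtained from the equations, BDG and the Kolmogorov criterion. The only caveat, inherited from the paper rather than introduced by you, is that the velocity part rests on the bound \eqref{apv}, whose uniformity in $\varepsilon$ is delicate since the energy inequality \eqref{eq:apriorivarepsilon} controls $\nabla_x\bu_\varepsilon$ only with the weight $\sqrt{\varepsilon}$.
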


\begin{proof}
For the details of this proof, we refer to \cite{BrHo}. 
\end{proof}

\begin{Proposition}\label{rhoutight14}
The set $\{\mu_{C_{\varepsilon}}, \mu_{D_{\varepsilon}}, \mu_{E_{\varepsilon}},  \mu_{P_{\varepsilon}};\,\varepsilon\in(0,1),\, k\ge 1\}$ is tight on $ \mathcal{X}_{C} \times \mathcal{X}_{D}  \times \mathcal{X}_{E} \times \mathcal{X}_{P}$.
\end{Proposition}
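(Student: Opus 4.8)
The plan is to prove tightness of each family separately on the respective path space, all of which carry the weak-$*$ topology of $L^\infty(0,T;\mathcal{M}_b(\T^3))$. The key observation is that in such spaces, tightness follows from uniform boundedness: bounded sets in $L^\infty(0,T;\mathcal{M}_b(\T^3))$ are, by the Banach--Alaoglu theorem together with separability of the predual $L^1(0,T;C(\T^3))$, relatively compact and metrizable in the weak-$*$ topology. Hence it suffices to exhibit, for each variable, a uniform-in-$\varepsilon$ bound in $L^1(\Omega; L^\infty(0,T;\mathcal{M}_b(\T^3)))$ (or better), and then invoke a Markov-type / Chebyshev argument: for $\eta>0$ choose $R_\eta$ so that the ball of radius $R_\eta$ in $L^\infty(0,T;\mathcal{M}_b(\T^3))$ has measure at least $1-\eta$ under the law.

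First I would collect the relevant a priori bounds from the energy inequality \eqref{eq:apriorivarepsilon}. For $E_\varepsilon = \frac12 |{\bf m}_\varepsilon|^2/\varrho_\varepsilon + \frac{1}{\gamma-1}\varrho_\varepsilon^\gamma$, estimate \eqref{eq:apriorivarepsilon} directly gives $\E[\sup_{t}\|E_\varepsilon(t)\|_{L^1(\T^3)}^p]\le C(p,T)$, and since $E_\varepsilon\ge 0$ the $L^1$-norm equals the total variation, so $\{E_\varepsilon\}$ is bounded in $L^p(\Omega;L^\infty(0,T;\mathcal{M}_b(\T^3)))$; this yields tightness on $\mathcal{X}_E$. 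For $P_\varepsilon = \varrho_\varepsilon^\gamma$, boundedness follows from \eqref{aprho}, i.e. $\E\|\varrho_\varepsilon\|_{L^\infty_tL^\gamma_x}^p\le C$ gives $\E\|P_\varepsilon\|_{L^\infty_t L^1_x}^{p}\le C$, hence tightness on $\mathcal{X}_P$. For $C_\varepsilon = ({\bf m}_\varepsilon\otimes{\bf m}_\varepsilon)/\varrho_\varepsilon$, note the pointwise bound $|C_\varepsilon|\le |{\bf m}_\varepsilon|^2/\varrho_\varepsilon \le 2 E_\varepsilon$, so $\|C_\varepsilon(t)\|_{\mathcal{M}_b(\T^3)}\le 2\|E_\varepsilon(t)\|_{L^1(\T^3)}$ and the bound on $E_\varepsilon$ transfers directly; tightness on $\mathcal{X}_C$ follows. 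For $D_\varepsilon = \sum_{k\ge1}\varrho_\varepsilon^{-1}|{\bf G}_k(\varrho_\varepsilon,\varrho_\varepsilon{\bf u}_\varepsilon)|^2$, I would use the growth assumption \eqref{FG2}: $|{\bf G}_k(x,\varrho,{\bf m})|\le \beta_k(1+\varrho+|{\bf m}|)$ (from $|{\bf G}_k(\cdot,0,0)|+|\partial_\varrho{\bf G}_k|+|\nabla_{\bf m}{\bf G}_k|\le\beta_k$), so $\sum_k|{\bf G}_k|^2\le (\sum_k\beta_k)^2(1+\varrho+|{\bf m}|)^2$, whence $D_\varepsilon\le C\varrho_\varepsilon^{-1}(1+\varrho_\varepsilon+\varrho_\varepsilon|{\bf u}_\varepsilon|)^2\le C(\varrho_\varepsilon^{-1}+\varrho_\varepsilon+\varrho_\varepsilon|{\bf u}_\varepsilon|^2)$. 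The troublesome $\varrho_\varepsilon^{-1}$ term is controlled because the initial law forces $\varrho_\varepsilon\ge M_1>0$ (this property propagates, or at least the energy-type estimates in \cite{BrHo} supply the needed bound on the noise term $\sum_k\int_0^t\int\varrho_\varepsilon^{-1}|{\bf G}_k|^2$ appearing in \eqref{eq:apriorivarepsilon}); in any case $\E\|D_\varepsilon\|_{L^1(0,T;L^1(\T^3))}\le C$, hence $\{D_\varepsilon\}$ is bounded in $L^1(\Omega;L^1(0,T;\mathcal{M}_b))\hookrightarrow L^1(\Omega;L^\infty(0,T;\mathcal{M}_b))$ after noting the relevant bound is actually $L^\infty$ in time from the supremum in \eqref{eq:apriorivarepsilon}, giving tightness on $\mathcal{X}_D$.

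Finally, tightness of the product law on $\mathcal{X}_C\times\mathcal{X}_D\times\mathcal{X}_E\times\mathcal{X}_P$ follows from tightness of each marginal: given $\eta>0$, pick compact sets $K_C,K_D,K_E,K_P$ in the respective spaces each of measure at least $1-\eta/4$, and then $K_C\times K_D\times K_E\times K_P$ is compact in the product and has measure at least $1-\eta$. I expect the main obstacle to be the careful handling of the $\varrho_\varepsilon^{-1}$ singularity in $D_\varepsilon$: one must either invoke the uniform lower bound on the density (which requires knowing it propagates from the initial condition, a point established in \cite{BrHo}), or more robustly observe that the combination $\varrho_\varepsilon^{-1}|{\bf G}_k|^2$ is precisely the quantity already controlled in the energy inequality \eqref{eq:apriorivarepsilon}/\eqref{en_01}, so no new estimate is actually needed — the bound is inherited from the construction of the approximate solutions. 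Everything else is a routine application of Banach--Alaoglu plus Chebyshev in separable dual spaces.
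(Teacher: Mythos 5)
Your proposal is correct and takes essentially the same route as the paper, whose proof is precisely this observation: the energy estimate \eqref{eq:apriorivarepsilon} gives uniform moment bounds for $C_\varepsilon, D_\varepsilon, E_\varepsilon, P_\varepsilon$ in $L^\infty(0,T;\mathcal{M}_b(\T^3))$, bounded sets there are relatively compact in the weak-$*$ topology, and a Chebyshev argument (plus taking products of compacts for the joint law) yields tightness. One caveat: your parenthetical claim that the lower bound $\varrho_\varepsilon\geq M_1$ propagates from the initial law is not available for weak martingale solutions of the compressible Navier--Stokes system, so the correct handling of $D_\varepsilon$ is your alternative one — the structural assumption \eqref{FG2} lets one dominate $\sum_{k\geq1}\varrho_\varepsilon^{-1}|\mathbf{G}_k(\varrho_\varepsilon,\mathbf{m}_\varepsilon)|^2$ by the total energy, which is exactly how the paper treats this quantity in Lemma~\ref{rhoutight131}.
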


\begin{proof}
These follow immediately from the \emph{a priori} bounds using the fact that all bounded sets in $L^{\infty}(0,T; \mathcal{M}_b(\T^3))$ are relatively compact with respect to the weak-$*$ topology.
\end{proof}

\begin{Proposition}\label{rhoutight1}
The set $\{\mu_{{\mathcal{V}}_{\varepsilon}};\,\varepsilon\in(0,1)\}$ is tight on $\mathcal{X}_{\mathcal{V}}$.
\end{Proposition}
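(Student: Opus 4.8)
The plan is to produce, for every $\eta>0$, a weak-$*$ compact set $K\subset\mathcal{X}_{\mathcal{V}}$ with $\mu_{\mathcal{V}_\varepsilon}(K)\ge 1-\eta$ for all $\varepsilon\in(0,1)$; as usual this reduces to a uniform moment bound on the fibre measures. Since $\mathcal{V}_{\varepsilon,t,x}=\delta_{[\varrho_\varepsilon(t,x),{\bf m}_\varepsilon(t,x)]}$, one has $\langle\mathcal{V}_{\varepsilon,t,x};|z|^{q}\rangle=|[\varrho_\varepsilon,{\bf m}_\varepsilon](t,x)|^{q}$ for any $q$, and choosing $q:=\tfrac{2\gamma}{\gamma+1}>1$ (note $q\le\gamma$ and $\mathbb{T}^3$ has finite measure), the bounds \eqref{aprho}--\eqref{estrhou2} give, uniformly in $\varepsilon$,
\[
\E\bigg[\int_0^T\!\!\int_{\mathbb{T}^3}\big\langle\mathcal{V}_{\varepsilon,t,x};|z|^{q}\big\rangle\,\dx\,\dt\bigg]\le C\,\E\bigg[\sup_{t\in[0,T]}\big(\|\varrho_\varepsilon\|_{L^\gamma(\mathbb{T}^3)}^{q}+\|{\bf m}_\varepsilon\|_{L^{2\gamma/(\gamma+1)}(\mathbb{T}^3)}^{q}\big)\bigg]\le C.
\]
Accordingly I would set, for $R>0$,
\[
K_R:=\Big\{\mathcal{V}\in\mathcal{X}_{\mathcal{V}}\ :\ \int_0^T\!\!\int_{\mathbb{T}^3}\big\langle\mathcal{V}_{t,x};|z|^{q}\big\rangle\,\dx\,\dt\le R\Big\},
\]
so that Markov's inequality yields $\mu_{\mathcal{V}_\varepsilon}(\mathcal{X}_{\mathcal{V}}\setminus K_R)\le C/R$ for all $\varepsilon$, and it remains to prove that each $K_R$ is weak-$*$ compact in $\mathcal{X}_{\mathcal{V}}$.

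To that end, note that every Young measure has unit total variation on each fibre, so $K_R$ is a bounded subset of the dual space $\big(L^1((0,T)\times\mathbb{T}^3;C_0(\R^4))\big)^*$; the predual being separable, $K_R$ is contained in a weak-$*$ compact, weak-$*$ metrizable ball, hence it suffices to show $K_R$ is weak-$*$ sequentially closed. Let $\mathcal{V}^n\in K_R$ with $\mathcal{V}^n\overset{*}{\rightharpoonup}\mathcal{V}$. Testing against $\psi(t,x)\,g(z)$ with $\psi\ge0$ and $0\le g\in C_0(\R^4)$ and using separability of $C_0(\R^4)$ shows $\mathcal{V}_{t,x}\ge0$ for a.e.\ $(t,x)$; testing likewise with $g\in C_0(\R^4)$, $0\le g\le1$, gives $\langle\mathcal{V}_{t,x};1\rangle\le1$ a.e. Writing $|z|^{q}=\sup_k\phi_k$ with $0\le\phi_k\in C_c(\R^4)$, $\phi_k\uparrow|z|^{q}$, each functional $\mathcal{V}\mapsto\int_0^T\!\int_{\mathbb{T}^3}\langle\mathcal{V}_{t,x};\phi_k\rangle\,\dx\,\dt$ is weak-$*$ continuous and bounded by $R$ on $K_R$, so by monotone convergence (valid once positivity is known) $\int_0^T\!\int_{\mathbb{T}^3}\langle\mathcal{V}_{t,x};|z|^{q}\rangle\,\dx\,\dt\le R$, i.e.\ the moment functional is weak-$*$ lower semicontinuous.

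The crux of the argument is upgrading $\langle\mathcal{V}_{t,x};1\rangle\le1$ to equality a.e., i.e.\ ruling out loss of mass to infinity in the limit; this is exactly where the moment bound is indispensable. For $M>0$ pick $g_M\in C_0(\R^4)$ with $0\le g_M\le1$ and $g_M\equiv1$ on $\{|z|\le M\}$; then for $\psi\in L^\infty((0,T)\times\mathbb{T}^3)$, $\psi\ge0$, and each $n$, the estimate $\langle\mathcal{V}^n_{t,x};1-g_M\rangle\le\mathcal{V}^n_{t,x}(\{|z|>M\})\le M^{-q}\langle\mathcal{V}^n_{t,x};|z|^{q}\rangle$ together with $\langle\mathcal{V}^n_{t,x};1\rangle=1$ gives
\[
\int_0^T\!\!\int_{\mathbb{T}^3}\psi\,\langle\mathcal{V}^n_{t,x};g_M\rangle\,\dx\,\dt\ \ge\ \int_0^T\!\!\int_{\mathbb{T}^3}\psi\,\dx\,\dt-\|\psi\|_{L^\infty}\,M^{-q}\,R.
\]
Passing $n\to\infty$ (the left side converges by weak-$*$ convergence, since $\psi\,g_M\in L^1((0,T)\times\mathbb{T}^3;C_0(\R^4))$) and then $M\to\infty$ (monotone convergence using $\mathcal{V}_{t,x}\ge0$), one obtains $\int_0^T\!\int_{\mathbb{T}^3}\psi\,\langle\mathcal{V}_{t,x};1\rangle\,\dx\,\dt\ge\int_0^T\!\int_{\mathbb{T}^3}\psi\,\dx\,\dt$ for all such $\psi$, whence $\langle\mathcal{V}_{t,x};1\rangle=1$ a.e. Thus $\mathcal{V}$ is again a Young measure lying in $K_R$, so $K_R$ is weak-$*$ closed, hence compact; combined with the Markov bound and the choice $R=C/\eta$ this proves tightness of $\{\mu_{\mathcal{V}_\varepsilon};\,\varepsilon\in(0,1)\}$ on $\mathcal{X}_{\mathcal{V}}$.

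The only genuinely delicate point is the last one: weak-$*$ limits of probability-valued Young measures are a priori only sub-probability-valued, and it is precisely the uniform $q$-th moment estimate inherited from the energy inequality \eqref{eq:apriorivarepsilon} that forces the limiting fibres to remain probability measures and keeps the sets $K_R$ closed. Everything else — positivity under weak-$*$ limits, lower semicontinuity of the moment functional, metrizability of bounded sets in the dual, and the Markov estimate — is routine.
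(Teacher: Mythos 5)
Your proof is correct and follows essentially the same route as the paper: both arguments introduce sublevel sets of a superlinear moment functional on the Young measures (the paper uses $|\xi_1|^{\gamma}+|\xi_2|^{2\gamma/(\gamma+1)}$, you use $|z|^{2\gamma/(\gamma+1)}$) and conclude by a Chebyshev/Markov estimate using the uniform bounds inherited from the energy inequality. The only difference is that the paper simply asserts the weak-$*$ relative compactness of such moment-bounded sets as a known criterion, whereas you prove it in detail — in particular the point that the moment bound rules out loss of mass, so the limiting fibres remain probability measures — which fills in, rather than deviates from, the paper's argument.
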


\begin{proof}
The aim is to apply the compactness criterion in $\big(L^{\infty}((0,T)\times \T^3; \mathcal{P}(\R^4)), w^* \big)$. Define the set
\begin{align*}
B_R:= \Big\lbrace {\mathcal{V}} \in \big(L^{\infty}((0,T)\times \T^3; \mathcal{P}(\R^4)), w^* \big); 
\int_0^T \int_{\T^3} \int_{\R^4} \Big(|\xi_1|^{\gamma} + |\xi_2|^{\frac{2\gamma}{\gamma+1}} \Big) \,d{\mathcal{V}}_{t,x}(\xi)\,dx\,dt \le R    \Big\rbrace,
\end{align*}
which is relatively compact in $\big(L^{\infty}((0,T)\times \T^3; \mathcal{P}(\R^4)), w^* \big)$. Note that
\begin{align*}
\mathcal{L}[{\mathcal{V}}_{\varepsilon}](B^c_R)&=
\p\Bigg( \int_0^T \int_{\T^3} \int_{\R^4} \Big(|\xi_1|^{\gamma} + |\xi_2|^{\frac{2\gamma}{\gamma+1}} \Big) \,d{\mathcal{V}}_{t,x}(\xi)\,dx\,dt > R  \Bigg) \\
&= \p\Bigg(\int_0^T \int_{\T^3} \Big(|\varrho_\varepsilon|^{\gamma} + |\textbf{m}_{\varepsilon}|^{\frac{2\gamma}{\gamma+1}} \Big) \,dx\,dt >R \Bigg)
 \le \frac1R \E\Big[\|\varrho_\varepsilon \|_{L^\gamma}^{\gamma} + \| \textbf{m}_{\varepsilon}\|_{L^\frac{2\gamma}{\gamma+1}}^{\frac{2\gamma}{\gamma+1}}\Big] \le \frac CR.
\end{align*}
The proof is complete.
\end{proof}

\begin{Proposition}\label{rhoutight12}
The set $\{\mu_{M_{\varepsilon}};\,\varepsilon\in(0,1)\}$ is tight on $\mathcal{X}_{M}$.
\end{Proposition}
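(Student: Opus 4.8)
The plan is to exploit that $\mathcal{X}_M$ carries the \emph{weak} topology of the separable Hilbert space $W^{\alpha,2}(0,T;W^{-m,2}(\T^3))$, in which bounded closed balls are compact; it therefore suffices to establish a moment bound of the form $\E\|M_\varepsilon\|_{W^{\alpha,2}(0,T;W^{-m,2})}^2\le C$, uniform in $\varepsilon$, and then conclude by Chebyshev's inequality. Here $\alpha$ denotes the exponent fixed in the definition of $\mathcal{X}_M$, which we take in the range $(0,\tfrac12)$ --- a different range from the one appearing in Lemma~\ref{comp}.

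First I would control the diffusion coefficient as a Hilbert--Schmidt operator into $W^{-m,2}(\T^3)$. Since $m>3/2$ there is a continuous embedding $L^1(\T^3)\hookrightarrow W^{-m,2}(\T^3)$, while \eqref{FG2} gives the pointwise bound $|\vc{G}_k(x,\vr,\vm)|\le\beta_k(1+|\vr|+|\vm|)$ with $\sum_{k\ge1}\beta_k<\infty$. Hence
\[
\|\mathbb{G}(\vr_\varepsilon,\vm_\varepsilon)\|_{L_2(\mathfrak U;W^{-m,2})}^2=\sum_{k\ge1}\|\vc{G}_k(\cdot,\vr_\varepsilon,\vm_\varepsilon)\|_{W^{-m,2}}^2\le C\Big(\sum_{k\ge1}\beta_k\Big)^{2}\Big(1+\|\vr_\varepsilon\|_{L^1}^2+\|\vm_\varepsilon\|_{L^1}^2\Big),
\]
and, using $L^\gamma\hookrightarrow L^1$ and $L^{2\gamma/(\gamma+1)}\hookrightarrow L^1$ together with the $\varepsilon$-uniform a priori bounds \eqref{aprho} and \eqref{estrhou2}, we obtain $\E\big[\sup_{t\in[0,T]}\|\mathbb{G}(\vr_\varepsilon,\vm_\varepsilon)(t)\|_{L_2(\mathfrak U;W^{-m,2})}^2\big]\le C$ uniformly in $\varepsilon$.

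Next I would estimate the fractional norm directly. From the definition of $\|\cdot\|_{W^{\alpha,2}(0,T;W^{-m,2})}$, Fubini's theorem, and the It\^o isometry in $W^{-m,2}(\T^3)$ one gets $\E\|M_\varepsilon(t)\|_{W^{-m,2}}^2=\E\int_0^t\|\mathbb{G}(\vr_\varepsilon,\vm_\varepsilon)\|_{L_2(\mathfrak U;W^{-m,2})}^2\,\dif\sigma\le C$ and, for $0\le s<t\le T$, $\E\|M_\varepsilon(t)-M_\varepsilon(s)\|_{W^{-m,2}}^2=\E\int_s^t\|\mathbb{G}(\vr_\varepsilon,\vm_\varepsilon)\|_{L_2(\mathfrak U;W^{-m,2})}^2\,\dif\sigma\le C\,|t-s|$, by the bound of the previous paragraph. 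Inserting the increment estimate into the Gagliardo double integral and using that $\int_0^T\!\int_0^T|t-s|^{-2\alpha}\,\dif t\,\dif s<\infty$ precisely when $\alpha<\tfrac12$, we arrive at $\E\|M_\varepsilon\|_{W^{\alpha,2}(0,T;W^{-m,2})}^2\le C$ uniformly in $\varepsilon$; this is the classical estimate for stochastic integrals in fractional-in-time Sobolev spaces, see \cite{FlandoliGatarek}. In particular $M_\varepsilon$ has $\p$-a.s. paths in $W^{\alpha,2}(0,T;W^{-m,2}(\T^3))$, so it is a genuine random variable in $\mathcal{X}_M$.

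To conclude: fix $\eta>0$ and pick $R>0$ with $C/R^2<\eta$; let $B_R$ be the closed ball of radius $R$ in $W^{\alpha,2}(0,T;W^{-m,2}(\T^3))$. As a bounded, closed, convex subset of a separable Hilbert space, $B_R$ is metrizable and compact in the weak topology, hence compact in $\mathcal{X}_M$; moreover, by Chebyshev's inequality, $\mu_{M_\varepsilon}(B_R^c)=\p\big(\|M_\varepsilon\|_{W^{\alpha,2}(0,T;W^{-m,2})}>R\big)\le C/R^2<\eta$ for every $\varepsilon\in(0,1)$, which is exactly the tightness asserted. The only genuinely delicate point is the time-increment estimate carrying the \emph{exact} factor $|t-s|$ --- it is what forces $\alpha\in(0,\tfrac12)$ in the definition of $\mathcal{X}_M$ --- while everything else is a routine combination of \eqref{FG2} with the energy bounds \eqref{eq:apriorivarepsilon}.
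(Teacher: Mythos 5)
Your proposal is correct and follows essentially the same route as the paper: the paper's proof is the one-line assertion that $M_{\varepsilon}\in L^p\big(\Omega;W^{\alpha,2}(0,T;W^{-m,2}(\T^3))\big)$ uniformly in $\varepsilon$, whence tightness on the weakly-topologized space follows from weak compactness of balls plus Chebyshev, exactly as you argue. Your write-up simply supplies the standard Flandoli--Gatarek details (the Hilbert--Schmidt bound via \eqref{FG2} and the a priori estimates, the It\^o isometry increment bound, and the resulting restriction $\alpha<\tfrac12$, which the paper leaves implicit).
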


\begin{proof}
First note that $M_{\varepsilon}= \int_0^t\,\mathbb{G}(\varrho_\varepsilon,\varrho_\varepsilon\bu_\varepsilon) \,\dif W(s) \in L^p\big(\Omega; W^{\alpha, 2}(0,T; W^{-m,2}(\T^3))\big)$, and hence tightness follows from weak compactness.
\end{proof}

\begin{Proposition}\label{rhoutight13}
The set $\{\mu_{N_{\varepsilon}};\,\varepsilon\in(0,1)\}$ is tight on $\mathcal{X}_{N}$.
\end{Proposition}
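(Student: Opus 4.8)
The plan is to deduce tightness of $\{\mu_{N_\varepsilon}\}$ on $\mathcal{X}_N=C([0,T];\R)$ from the Kolmogorov--Chentsov criterion, in the form of the Kolmogorov test (Lemma~\ref{lemma01}). First I would record that $N_\varepsilon(t)=\sum_{k\ge1}\int_0^t g^\varepsilon_k(s)\,\Dif W_k(s)$ with $g^\varepsilon_k(s):=\intTor{\bu_\varepsilon\cdot\vc{G}_k(\varrho_\varepsilon,\textbf{m}_\varepsilon)}$; since $\varrho_\varepsilon$ and $\bu_\varepsilon$ are progressively measurable, so is each $g^\varepsilon_k$, hence $N_\varepsilon$ is a real-valued continuous square-integrable $(\mathfrak{F}_{\varepsilon,t})$-martingale with $N_\varepsilon(0)=0$ and quadratic variation $\langle N_\varepsilon\rangle_t=\int_0^t\sum_{k\ge1}|g^\varepsilon_k(s)|^2\,\Dif s$.

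The key step is to bound the instantaneous noise intensity $\sum_{k\ge1}|g^\varepsilon_k(s)|^2$ by an $s$-independent random quantity with uniform-in-$\varepsilon$ moments. Splitting $\bu_\varepsilon\cdot\vc{G}_k=(\sqrt{\varrho_\varepsilon}\,\bu_\varepsilon)\cdot(\varrho_\varepsilon^{-1/2}\vc{G}_k)$ and applying Cauchy--Schwarz on $\tor$, $$\sum_{k\ge1}|g^\varepsilon_k(s)|^2\le\Big(\intTor{\varrho_\varepsilon|\bu_\varepsilon|^2}\Big)\Big(\intTor{ \sum_{k\ge1}\varrho_\varepsilon^{-1}|\vc{G}_k(\varrho_\varepsilon,\textbf{m}_\varepsilon)|^2 }\Big).$$ By the growth hypothesis~\eqref{FG2}, the second factor --- which is exactly the ``noise correction'' term appearing in the energy inequality~\eqref{EI2''} --- is dominated by $C\big(1+\intTor{(\varrho_\varepsilon|\bu_\varepsilon|^2+\varrho_\varepsilon^\gamma)}\big)$, so that $$\sum_{k\ge1}|g^\varepsilon_k(s)|^2\le C\,\bar E_\varepsilon^2,\qquad \bar E_\varepsilon:=1+\sup_{t\in[0,T]}\intTor{\big(\tfrac12\tfrac{|\textbf{m}_\varepsilon|^2}{\varrho_\varepsilon}+\tfrac1{\gamma-1}\varrho_\varepsilon^\gamma\big)},$$ with the right-hand side independent of $s$; by the a priori estimate~\eqref{eq:apriorivarepsilon}, $\sup_{\varepsilon\in(0,1)}\E\big[\bar E_\varepsilon^{q}\big]<\infty$ for all $q\in[1,\infty)$.

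With this in hand, $\langle N_\varepsilon\rangle_t-\langle N_\varepsilon\rangle_s\le C\,\bar E_\varepsilon^2\,|t-s|$, so the Burkholder--Davis--Gundy inequality applied to the shifted martingale gives, for every integer $m\ge1$, $$\E\big[|N_\varepsilon(t)-N_\varepsilon(s)|^{2m}\big]\le C_m\,\E\big[(\langle N_\varepsilon\rangle_t-\langle N_\varepsilon\rangle_s)^{m}\big]\le C_m\,\E\big[\bar E_\varepsilon^{2m}\big]\,|t-s|^{m}\le C(m)\,|t-s|^{m},$$ uniformly in $\varepsilon$. For $m\ge2$ this is precisely the hypothesis of Lemma~\ref{lemma01} with $\alpha=2m$ and $\beta=m-1>0$; in its quantitative form it produces a continuous modification of $N_\varepsilon$ with $\gamma$-H\"older trajectories for every $\gamma<\tfrac{m-1}{2m}$, together with $\sup_{\varepsilon\in(0,1)}\E\big[\|N_\varepsilon\|_{C^\gamma([0,T];\R)}^{2m}\big]<\infty$. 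Fixing such $m$ and $\gamma$, the balls $K_\lambda:=\{f\in C([0,T];\R):f(0)=0,\ \|f\|_{C^\gamma}\le\lambda\}$ are relatively compact in $C([0,T];\R)$ by Arzel\`{a}--Ascoli, and Chebyshev's inequality gives $\p(N_\varepsilon\notin K_\lambda)\le\lambda^{-2m}\,\E\big[\|N_\varepsilon\|_{C^\gamma}^{2m}\big]\le C\lambda^{-2m}$, which can be made $<\eta$ by choosing $\lambda$ large, uniformly in $\varepsilon$; hence $\{\mu_{N_\varepsilon}\}$ is tight on $\mathcal{X}_N$.

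I expect the main obstacle to be the $s$-uniform bound in the key step. The only uniform control on $\bu_\varepsilon$ is $L^2$-in-time, namely~\eqref{apv}, so the naive pairing $|g^\varepsilon_k(s)|\le\|\bu_\varepsilon(s)\|_{L^6}\|\vc{G}_k(s)\|_{L^{6/5}}$ (legitimate since $\gamma>\tfrac32$ makes $L^\gamma,L^{\frac{2\gamma}{\gamma+1}}\hookrightarrow L^{6/5}(\tor)$) only yields $\langle N_\varepsilon\rangle_t-\langle N_\varepsilon\rangle_s\lesssim\bar E_\varepsilon\int_s^t\|\bu_\varepsilon\|_{W^{1,2}}^2\,\Dif\sigma$, whose modulus in $|t-s|$ is not a power of $|t-s|$ and is therefore useless for Kolmogorov's criterion. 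The remedy is to pair against the weight $\varrho_\varepsilon$ so that the kinetic energy $\intTor{\varrho_\varepsilon|\bu_\varepsilon|^2}\le 2\,\bar E_\varepsilon$ --- which is bounded in $L^\infty_t$ $\p$-a.s.\ with uniformly bounded moments --- carries the time dependence, at the cost of estimating $\intTor{\sum_{k\ge1}\varrho_\varepsilon^{-1}|\vc{G}_k|^2}$; that this quantity is finite and controlled by the total energy is exactly what~\eqref{FG2} guarantees and is precisely what makes~\eqref{EI2''} meaningful.
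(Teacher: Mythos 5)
Your proposal is correct and follows essentially the same route as the paper: bound the increments $\E[|N_\varepsilon(t)-N_\varepsilon(s)|^{r}]\le C|t-s|^{r/2}$ via the martingale/BDG estimate, control the quadratic variation density by the weighted Cauchy--Schwarz with $\sqrt{\varrho_\varepsilon}$ together with \eqref{FG2} and the uniform energy bound \eqref{eq:apriorivarepsilon}, then invoke the Kolmogorov criterion (Lemma~\ref{lemma01}) and the compactness of H\"older balls in $C([0,T];\R)$. Your write-up merely makes explicit the pointwise-in-time bound on $\sum_k|g^\varepsilon_k|^2$ and the Chebyshev step that the paper states more tersely.
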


\begin{proof}
First observe that, for each $\varepsilon$, $N_\varepsilon(t)=\sum_{k\geq1}\int_0^t\intTor{ \vu_{\varepsilon} \cdot {\vc{G}_k}(\varrho_\varepsilon, \textbf{m}_\varepsilon ) } \,\Dif W$ is a square integrable martingale. Note that for $r > 2$
\begin{align*}
\E\Big[ \Big|\sum_{k\ge 1} \int_s^t \int_{\T^3}\vu_{\varepsilon} \cdot {\vc{G}_k}(\varrho_\varepsilon, \textbf{m}_\varepsilon )\Big|^r  \Big] &\le \E\Big[ \int_s^t \sum_{k=1}^{\infty} \Big|\int_{\T^3}\vu_{\varepsilon} \cdot {\vc{G}_k}(\varrho_\varepsilon, \textbf{m}_\varepsilon)\Big|^2  \Big]^{r/2} \\
& \le |t-s|^{r/2}\, \Big(1 + \E \Big[\sup_{0\le t \le T} \| \sqrt{\varrho_\varepsilon} u_{\varepsilon}\|^r_{L^2} \Big]\Big)
\le C|t-s|^{r/2},
\end{align*}
and the Kolmogorov continuity criterion (cf. Lemma~\ref{lemma01}) applies. This in particular implies that, for some $\alpha>1$
$$
\sum_{k\geq1}\int_0^t\intTor{ \vu_{\varepsilon} \cdot {\vc{G}_k}(\varrho_\varepsilon, \textbf{m}_\varepsilon ) } \,\Dif W \in L^r(\Omega; C^{\alpha}(0,T; \R)).
$$
Therefore, tightness of law follows from the compact embedding of $C^{\alpha}$ into $C^0$.
\end{proof}

Combining all the informations obtained from Proposition~\ref{prop:rhotight}, Proposition~\ref{rhoutight14}, Proposition~\ref{rhoutight1}, Proposition~\ref{rhoutight12}, and Proposition~\ref{rhoutight13}, we conclude that
\begin{Corollary}
The set $\{\mu^\varepsilon;\,\varepsilon\in(0,1)\}$ is tight on $\mathcal{X}$. 
\end{Corollary}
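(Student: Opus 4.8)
The plan is to deduce the tightness of the joint laws $\mu^\varepsilon$ from the tightness of the individual marginals, using the elementary fact that a finite product of tight families of probability measures is again tight. Indeed, $\mathcal{X}$ is by construction the finite Cartesian product (equipped with the product topology) of the eleven path spaces $\mathcal{X}_\varrho,\mathcal{X}_\bu,\mathcal{X}_{\varrho\bu},\mathcal{X}_W,\mathcal{X}_C,\mathcal{X}_P,\mathcal{X}_E,\mathcal{X}_{\mathcal{V}},\mathcal{X}_M,\mathcal{X}_N,\mathcal{X}_D$, and $\mu^\varepsilon$ is the law of the random element $(\varrho_\varepsilon,\bu_\varepsilon,\varrho_\varepsilon\bu_\varepsilon,W,C_\varepsilon,P_\varepsilon,E_\varepsilon,\mathcal{V}_\varepsilon,M_\varepsilon,N_\varepsilon,D_\varepsilon)$, whose projection onto each factor is exactly the corresponding marginal law $\mu_{\varrho_\varepsilon}$, $\mu_{\bu_\varepsilon}$, $\mu_{\varrho_\varepsilon\bu_\varepsilon}$, $\mu_{W_\varepsilon}$, $\mu_{C_\varepsilon}$, $\mu_{P_\varepsilon}$, $\mu_{E_\varepsilon}$, $\mu_{\mathcal{V}_\varepsilon}$, $\mu_{M_\varepsilon}$, $\mu_{N_\varepsilon}$, $\mu_{D_\varepsilon}$.

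First I would record that each of these marginal families is tight on its own factor: tightness of $\{\mu_{W_\varepsilon}\}$ on $\mathcal{X}_W$ is immediate, since the laws of $W_\varepsilon$ all coincide and a single Radon probability measure on the Polish space $C([0,T];\mathfrak U_0)$ is tight; tightness of $\{\mu_{\varrho_\varepsilon}\}$, $\{\mu_{\bu_\varepsilon}\}$, $\{\mu_{\varrho_\varepsilon\bu_\varepsilon}\}$ is Proposition~\ref{prop:rhotight}; tightness of $\{\mu_{C_\varepsilon}\}$, $\{\mu_{D_\varepsilon}\}$, $\{\mu_{E_\varepsilon}\}$, $\{\mu_{P_\varepsilon}\}$ is Proposition~\ref{rhoutight14}; tightness of $\{\mu_{\mathcal{V}_\varepsilon}\}$ is Proposition~\ref{rhoutight1}; tightness of $\{\mu_{M_\varepsilon}\}$ is Proposition~\ref{rhoutight12}; and tightness of $\{\mu_{N_\varepsilon}\}$ is Proposition~\ref{rhoutight13}. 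Thus the list of propositions above indeed exhausts all eleven coordinates of $\mathcal{X}$.

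Finally, given $\eta>0$, for each factor $\mathcal{X}_j$, $j=1,\dots,11$, I would select a compact set $K_j^\eta\subset\mathcal{X}_j$ with $\sup_{\varepsilon\in(0,1)}\mu^\varepsilon_j(\mathcal{X}_j\setminus K_j^\eta)<\eta/11$, where $\mu^\varepsilon_j$ denotes the $j$-th marginal of $\mu^\varepsilon$; then $K^\eta:=\prod_{j=1}^{11}K_j^\eta$ is compact in $\mathcal{X}$, being a finite product of compact sets, and a union bound over the coordinates yields
\[
\mu^\varepsilon(\mathcal{X}\setminus K^\eta)\ \le\ \sum_{j=1}^{11}\mu^\varepsilon_j(\mathcal{X}_j\setminus K_j^\eta)\ <\ \eta
\]
for every $\varepsilon\in(0,1)$, which is precisely the asserted tightness of $\{\mu^\varepsilon;\,\varepsilon\in(0,1)\}$ on $\mathcal{X}$. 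There is no genuine obstacle in this step: the argument is pure bookkeeping, the only substantive inputs being the marginal-tightness propositions already established, together with the observation that for a \emph{finite} product no joint compactness reasoning is required.
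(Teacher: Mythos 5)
Your proposal is correct and follows the same route as the paper, which simply combines Propositions~\ref{prop:rhotight}, \ref{rhoutight14}, \ref{rhoutight1}, \ref{rhoutight12} and \ref{rhoutight13} (plus the immediate tightness of $\mu_{W_\varepsilon}$) without further comment. You merely make explicit the standard finite-product/union-bound bookkeeping that the paper leaves implicit.
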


At this point, we are ready to apply Jakubowski-Skorokhod representation theorem (see also Brzezniak et. al. \cite{BrzezniakHausenblasRazafimandimby}) to extract a.s convergence on a new probability space. In what follows, passing to a weakly convergent subsequence $\mu^{\varepsilon}$ (and denoting by $\mu$ the limit law) we infer the following result:

\begin{Proposition}\label{prop:skorokhod1}
There exists a subsequence $\mu^\varepsilon$ (not relabelled), a probability space $(\tilde\Omega,\tilde\mf,\tilde\prst)$ with $\mathcal{X}$-valued Borel measurable random variables $(\tilde\varrho_{\varepsilon},\tilde{\bf m}_\varepsilon,\tilde{\textbf{u}}_\varepsilon, \tilde W_\varepsilon, \tilde C_{\varepsilon}, \tilde D_{\varepsilon}, \tilde E_{\varepsilon}, \tilde P_{\varepsilon}, \tilde M_{\varepsilon}, \tilde N_{\varepsilon}, \tilde \nu_{\varepsilon})$, $\varepsilon \in (0,1)$, and  $(\tilde\varrho,\tilde{\bf m},\tilde{\textbf{u}},\tilde W, \tilde C, \tilde D, \tilde E, \tilde P, \tilde M, \tilde N, \tilde \nu)$ such that 
\begin{enumerate}
 \item [(1)]the law of $(\tilde\varrho_{\varepsilon},\tilde{\bf m}_\varepsilon, \tilde{\textbf{u}}_\varepsilon, \tilde W_\varepsilon, \tilde C_{\varepsilon}, \tilde D_{\varepsilon}, \tilde E_{\varepsilon}, \tilde P_{\varepsilon}, \tilde M_{\varepsilon}, \tilde N_{\varepsilon}, \tilde \nu_{\varepsilon})$ is given by $\mu^\varepsilon$, $\varepsilon\in(0,1)$,
\item [(2)]the law of $(\tilde\varrho,\tilde{\bf m},\tilde{\textbf{u}},\tilde W, \tilde C, \tilde D, \tilde E, \tilde P, \tilde M, \tilde N, \tilde \nu)$, denoted by $\mu$, is a Radon measure,
\item [(3)]$(\tilde\varrho_{\varepsilon},\tilde{\bf m}_\varepsilon, \tilde{\textbf{u}}_\varepsilon, \tilde W_\varepsilon, \tilde C_{\varepsilon}, \tilde D_{\varepsilon}, \tilde E_{\varepsilon}, \tilde P_{\varepsilon}, \tilde M_{\varepsilon}, \tilde N_{\varepsilon}, \tilde \nu_{\varepsilon})$ converges $\,\tilde{\prst}$-almost surely to \\$(\tilde\varrho,\tilde{\bf m},\tilde{\textbf{u}}, \tilde W, \tilde C, \tilde D, \tilde E, \tilde P, \tilde M, \tilde N, \tilde \nu)$ in the topology of $\mathcal{X}$, i.e.,
\begin{align*}
&\tilde\varrho_\varepsilon \rightarrow \tilde\varrho \,\, \text{in}\, \,C_w([0,T]; L^{\gamma}(\T^3)), \quad
&\tilde{\textbf{m}}_\varepsilon &\rightarrow \tilde{\textbf m} \,\, \text{in}\, \,C_w([0,T]; L^{\frac{2\gamma}{\gamma+1}}(\T^3)),\\
&\tilde{\textbf{u}}_\varepsilon \rightarrow \tilde{\textbf u} \,\, \text{weak in}\, \,L^2( 0,T; W^{1,2}(\T^3)), \quad
&\tilde W_\varepsilon &\rightarrow \tilde W \,\, \text{in}\, \,C([0,T]; \mathfrak{U}_0)),\\
& \tilde C_\varepsilon \rightarrow \tilde C \,\, \text{weak-$*$ in}\, \, L^{\infty}(0,T; \mathcal{M}_b(\T^3)), \quad 
&\tilde N_\varepsilon &\rightarrow \tilde N \,\, \text{in}\, \, C([0,T]; \R),\\
& \tilde E_\varepsilon \rightarrow \tilde E \,\, \text{weak-$*$ in}\, \, L^{\infty}(0,T; \mathcal{M}_b(\T^3)), \quad 
& \tilde P_\varepsilon &\rightarrow \tilde P \,\, \text{weak-$*$ in}\, \, L^{\infty}(0,T; \mathcal{M}_b(\T^3)),\\
& \tilde \nu_\varepsilon \rightarrow \tilde \nu \,\, \text{weak-$*$ in}\, \, L^{\infty}((0,T)\times \T^3; \mathcal{P}(\R^4)), \quad 
& \tilde M_\varepsilon &\rightarrow \tilde M \,\, \text{weak in}\, \, W^{\alpha, 2}(0,T; W^{-m,2}(\T^3)),\\
& \tilde D_\varepsilon \rightarrow \tilde D \,\, \text{weak-$*$ in}\, \, L^{\infty}(0,T; \mathcal{M}_b(\T^3)).
\end{align*}
%\item [(4)] For any $\varepsilon$, $\tilde W_{\varepsilon}=\tilde W$.
\item [(4)] For any Carath\'{e}odory function $H=H(t,x,\varrho,\textbf m)$, where $(t,x)\in (0,T)\times \T^3$ and $(\varrho,\textbf m) \in \R^4$, satisfying for some $p, q$ the growth condition
\begin{align*}
|H(t,x,\varrho,\textbf m)| \le 1 + |\varrho|^{p} + |\textbf m|^q,
\end{align*}
uniformly in $(t,x)$. Then we have $\tilde\p$-a.s.
$$
H(\tilde\varrho_\varepsilon, \tilde{\textbf{m}}_\varepsilon) \rightarrow \overline{H(\tilde\varrho, \tilde{\textbf m})}\,\, \text{in}\,\, L^r((0,T)\times\T^3),\,\, \text{for all}\,\, 1<r\le\frac{\gamma}{p}\wedge \frac{2\gamma}{q(\gamma+1)}.
$$
\end{enumerate}
\end{Proposition}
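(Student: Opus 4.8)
The plan is to obtain the whole statement as a consequence of the Jakubowski--Skorokhod representation theorem \cite{Jakubowski} (see also \cite{BrzezniakHausenblasRazafimandimby}). This is the right tool here because the path space $\mathcal{X}$ is a countable product of spaces several of which carry weak or weak-$*$ topologies and are therefore not Polish, so the classical Skorokhod theorem cannot be used; what one needs instead is that each factor is \emph{quasi-Polish} (sub-Polish), i.e. carries a countable family of continuous real-valued functions separating its points. First I would note that the tightness of $\{\mu^\varepsilon\}_{\varepsilon\in(0,1)}$ on $\mathcal{X}$ established above, together with a Prokhorov-type argument, allows us to extract a subsequence (not relabelled) along which $\mu^\varepsilon$ converges weakly to a Radon probability measure $\mu$ on $\mathcal{X}$.

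Next I would verify the quasi-Polish property of each factor, so that $\mathcal{X}$ itself is quasi-Polish. For $C_w([0,T];L^\gamma(\mt))$ and $C_w([0,T];L^{2\gamma/(\gamma+1)}(\mt))$ one uses separability and reflexivity of the underlying Lebesgue spaces (here $\gamma>1$) and the countable family of evaluations $\mu\mapsto\langle\mu(t),\phi\rangle$, with $t$ rational and $\phi$ in a countable dense set; for the factors equipped with the weak-$*$ topology on $L^\infty(0,T;\mathcal{M}_b(\mt))$ (that is $\mathcal{X}_C,\mathcal{X}_P,\mathcal{X}_E,\mathcal{X}_D$) and for $\mathcal{X}_{\mathcal{V}}$ one pairs against a countable dense subset of the separable preduals $L^1(0,T;C(\mt))$, respectively $L^1((0,T)\times\mt;C(\R^4))$; for $\mathcal{X}_\bu$ and $\mathcal{X}_M$ one uses separability and reflexivity of $L^2(0,T;W^{1,2}(\mt))$ and of $W^{\alpha,2}(0,T;W^{-m,2}(\mt))$; and $\mathcal{X}_W$, $\mathcal{X}_N$ are Polish. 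I would refer to \cite[Chapter~2]{BrFeHobook} for these checks. Applying the Jakubowski--Skorokhod theorem then produces the probability space $(\tilde\Omega,\tilde{\mathfrak{F}},\tilde{\mathbb{P}})$ and the two families of random variables of items (1)--(2), with the prescribed laws, together with the $\tilde{\mathbb{P}}$-a.s. convergence of item (3), which is just componentwise convergence in the respective factor topologies.

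For item (4) the plan is the following. Since the joint law of $(\varrho_\varepsilon,{\bf m}_\varepsilon,\mathcal{V}_\varepsilon)$ is preserved and the relation $\mathcal{V}_\varepsilon=\delta_{[\varrho_\varepsilon,{\bf m}_\varepsilon]}$ is Borel measurable on $\mathcal{X}$, one gets $\tilde\nu_\varepsilon=\delta_{[\tilde\varrho_\varepsilon,\tilde{\bf m}_\varepsilon]}$ $\tilde{\mathbb{P}}$-a.s.; likewise the uniform bounds \eqref{eq:apriorivarepsilon} and \eqref{apv}--\eqref{est:rhobfu22}, depending only on laws, transfer verbatim to the tilde variables. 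In particular $(\tilde\varrho_\varepsilon,\tilde{\bf m}_\varepsilon)$ is bounded in $L^{p_0}(\tilde\Omega\times(0,T)\times\mt)$ for the exponent $p_0=\frac{2\gamma}{\gamma+1}>1$, so that Lemma~\ref{lem001} (applied componentwise, with the two integrability exponents $\gamma$ and $\frac{2\gamma}{\gamma+1}$) is available: along a further, not relabelled, subsequence $(\tilde\varrho_\varepsilon,\tilde{\bf m}_\varepsilon)$ generates a Young measure, and by uniqueness of the weak-$*$ limit this Young measure must coincide $\tilde{\mathbb{P}}$-a.s. with $\tilde\nu$, the a.s. limit of $\tilde\nu_\varepsilon=\delta_{[\tilde\varrho_\varepsilon,\tilde{\bf m}_\varepsilon]}$. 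The conclusion of Lemma~\ref{lem001} (the fundamental theorem on Young measures) then yields, for every Carath\'eodory $H$ with $|H(t,x,\varrho,{\bf m})|\le 1+|\varrho|^p+|{\bf m}|^q$, the asserted weak convergence $H(\tilde\varrho_\varepsilon,\tilde{\bf m}_\varepsilon)\rightharpoonup\overline{H(\tilde\varrho,\tilde{\bf m})}$ in $L^r((0,T)\times\mt)$ with $\overline{H(\tilde\varrho,\tilde{\bf m})}(t,x)=\langle\tilde\nu_{t,x};H(t,x,\cdot)\rangle$, valid for $1<r\le\frac{\gamma}{p}\wedge\frac{2\gamma}{q(\gamma+1)}$.

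The main difficulty I anticipate is not a single hard estimate but the careful bookkeeping that makes the representation theorem deliver everything simultaneously: one must certify the quasi-Polish character of every non-metrizable factor so that Jakubowski--Skorokhod genuinely applies, and one must arrange that every structural identity needed later --- in particular $\tilde\nu_\varepsilon=\delta_{[\tilde\varrho_\varepsilon,\tilde{\bf m}_\varepsilon]}$, together with the almost-sure relations tying $\tilde C_\varepsilon,\tilde D_\varepsilon,\tilde E_\varepsilon,\tilde P_\varepsilon,\tilde M_\varepsilon,\tilde N_\varepsilon$ to $(\tilde\varrho_\varepsilon,\tilde{\bf m}_\varepsilon,\tilde{\bf u}_\varepsilon,\tilde W_\varepsilon)$ --- is encoded in the joint law \emph{before} transferring, since the theorem preserves only laws. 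The one genuinely analytic point is the identification in item (4) of the weak-$*$ limit $\tilde\nu$ with the Young measure generated by $(\tilde\varrho_\varepsilon,\tilde{\bf m}_\varepsilon)$, which is exactly where Lemma~\ref{lem001} does the work.
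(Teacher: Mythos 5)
Your proposal is correct and follows essentially the same route as the paper: tightness of $\{\mu^\varepsilon\}$ plus the Jakubowski--Skorokhod representation theorem on the quasi-Polish product space $\mathcal{X}$ yields items (1)--(3), and item (4) is obtained from the Young-measure result of Lemma~\ref{lem001} applied to $(\tilde\varrho_\varepsilon,\tilde{\bf m}_\varepsilon)$, identifying the generated Young measure with $\tilde\nu$ via $\tilde\nu_\varepsilon=\delta_{[\tilde\varrho_\varepsilon,\tilde{\bf m}_\varepsilon]}$. In fact you supply more detail (the quasi-Polish verification of each factor and the law-transfer of the structural identities) than the paper's own two-line proof.
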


\begin{proof}
Proof of the items $(1)$, $(2)$, and $(3)$ directly follow from Jakubowski-Skorokhod representation theorem. For the proof of the item $(4)$, we refer to the Lemma~\ref{lemma001}.
\end{proof}

We remark that the energy inequality \eqref{eq:apriorivarepsilon} continues to hold on the new probability space, thanks to the equality of joint laws. In other words, all the \emph{a priori} estimates \eqref{apv}--\eqref{est:rhobfu22} also hold for the new random variables.

\subsubsection{Passage to the limit}
Given the above convergences, our aim is to pass to the limit in approximate equations \eqref{P1NS}--\eqref{P2NS}, and energy inequality \eqref{en_01}. We first show that the approximations $\tilde \vr_\ep, \tilde u_{\varepsilon}$ solve equations \eqref{P1NS}--\eqref{P2NS} on the new probability space $(\tilde\Omega,\tilde\mf,\tilde\prst)$.
To that context, let $(\tilde{\mf}_t^\varepsilon)$ and $(\tilde{\mf}_t)$, respectively, be the $\tilde{\prst}$-augmented canonical filtration of the processes $(\tilde\varrho_\varepsilon,\tilde{\bf m}_\varepsilon,\tilde{W}_\varepsilon,\tilde M_{\varepsilon},\tilde N_{\varepsilon})$ and $(\langle {\mathcal{\tilde V}^{\omega}_{t,x}}; \tilde \varrho \rangle,\langle {\mathcal{\tilde V}^{\omega}_{t,x}}; \tilde {\textbf m} \rangle,\tilde{W}, \tilde{M}, \tilde{N})$, respectively, that is
\begin{equation*}
\begin{split}
\tilde{\mf}_t^\varepsilon&=\sigma\big(\sigma\big(\bfr_t\tilde\varrho_\varepsilon,\,\bfr_t\tilde{\bf m}_\varepsilon,\,\bfr_t \tilde{W}_\varepsilon,\bfr_t \tilde M_{\varepsilon}, \bfr_t \tilde N_{\varepsilon}\big)\cup\big\{N\in\tilde{\mf};\;\tilde{\prst}(N)=0\big\}\big),\quad t\in[0,T],\\
\tilde{\mf}_t&=\sigma\big(\sigma\big(\bfr_t\langle {\mathcal{\tilde V}^{\omega}_{t,x}}; \tilde \varrho \rangle, \,\bfr_t\langle {\mathcal{\tilde V}^{\omega}_{t,x}}; \tilde {\textbf m} \rangle,\,\bfr_t\tilde{W}, \bfr_t\tilde{M}, \bfr_t\tilde{N}\big)\cup\big\{N\in\tilde{\mf};\;\tilde{\prst}(N)=0\big\}\big),\quad t\in[0,T],
\end{split}
\end{equation*}
where we denote by $\bfr_t$ the operator of restriction to the interval $[0,t]$ acting on various path spaces.

\begin{Proposition}\label{prop:martsol}
For every $\varepsilon\in(0,1)$, $\big((\tilde{\Omega},\tilde{\mf},(\tilde{\mf}_{\varepsilon,t}),\tilde{\prst}),\tilde\varrho_\varepsilon,\tilde{\bu}_\varepsilon,\tilde{W}\big)$ is a finite energy weak martingale solution to \eqref{P1NS}--\eqref{en_01} with the initial law $\Lambda_\varepsilon$. 
\end{Proposition}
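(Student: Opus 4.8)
The plan is to transfer, by equality of laws, the entire structure of the approximate stochastic Navier--Stokes problem from the original probability space to the new one $(\tilde\Omega,\tilde\mf,\tilde\prst)$ produced by the Jakubowski--Skorokhod theorem; this is by now a standard procedure, see e.g. \cite{BrHo,BrFeHobook,BrzezniakHausenblasRazafimandimby}. First I would observe that, since by Proposition~\ref{prop:skorokhod1} the joint law of $(\tilde\varrho_\varepsilon,\tilde{\bf m}_\varepsilon,\tilde{\bf u}_\varepsilon,\tilde W_\varepsilon,\tilde M_\varepsilon,\tilde N_\varepsilon)$ coincides with that of $(\varrho_\varepsilon,{\bf m}_\varepsilon,{\bf u}_\varepsilon,W_\varepsilon,M_\varepsilon,N_\varepsilon)$, all quantitative information is preserved: the uniform bounds \eqref{eq:apriorivarepsilon}, hence \eqref{apv}--\eqref{est:rhobfu22}, hold for the tilde variables, one has $\tilde\varrho_\varepsilon\ge0$, $\tilde{\bf m}_\varepsilon=\tilde\varrho_\varepsilon\tilde{\bf u}_\varepsilon$ a.e., the requisite weak time-continuity of $t\mapsto\langle\tilde\varrho_\varepsilon,\phi\rangle$ and $t\mapsto\langle\tilde{\bf m}_\varepsilon,\bm\varphi\rangle$ holds, and, since evaluation at $t=0$ is continuous on $C_w([0,T];L^\gamma)$ and $C_w([0,T];L^{2\gamma/(\gamma+1)})$, one obtains the correct initial law $\tilde\prst\circ[\tilde\varrho_\varepsilon(0),\tilde{\bf m}_\varepsilon(0)]^{-1}=\Lambda_\varepsilon$. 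Progressive measurability with respect to $(\tilde\mf_{\varepsilon,t})$ is automatic, because that filtration is, by construction, the $\tilde\prst$-augmented canonical filtration of the tilde processes. This disposes of items (a)--(f) of Definition~\ref{def:weakMartin}.

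Next I would verify that $\tilde W_\varepsilon$ is an $(\tilde\mf_{\varepsilon,t})$-cylindrical Wiener process. This follows from the usual argument: being a Wiener process relative to a canonically generated filtration is characterised by a countable family of identities between expectations of bounded continuous functionals of the trajectory restricted to subintervals (increments centred Gaussian with the correct covariance and independent of the past), and each such identity transfers from $W_\varepsilon$ to $\tilde W_\varepsilon$ by equality of laws; see \cite[Sec.~2.6]{BrFeHobook}. Likewise, the continuity equation (first line of \eqref{eq:energy}) and the deterministic parts of the momentum balance and of the energy inequality \eqref{EI2''} are continuous functionals of $(\varrho_\varepsilon,{\bf m}_\varepsilon,{\bf u}_\varepsilon)$ and of the carried martingales $\tilde M_\varepsilon,\tilde N_\varepsilon$; since the corresponding relations hold $\prst$-a.s. on the original space, they hold $\tilde\prst$-a.s. on the new one.

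The genuinely nontrivial step, which I expect to be the main obstacle, is identifying the martingale terms, i.e.\ proving
$$\tilde M_\varepsilon(t)=\int_0^t\mathbb{G}(\tilde\varrho_\varepsilon,\tilde\varrho_\varepsilon\tilde{\bf u}_\varepsilon)\,\dif\tilde W_\varepsilon,\qquad \tilde N_\varepsilon(t)=\sum_{k\ge1}\int_0^t\intTor{\tilde{\bf u}_\varepsilon\cdot{\vc G}_k(\tilde\varrho_\varepsilon,\tilde{\bf m}_\varepsilon)}\,\dif\tilde W_{\varepsilon,k}$$
relative to $(\tilde\mf_{\varepsilon,t})$. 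For this I would use the standard martingale-representation lemma: on the original space $M_\varepsilon=\int_0^\cdot\mathbb{G}(\varrho_\varepsilon,{\bf m}_\varepsilon)\,\dif W_\varepsilon$, so $M_\varepsilon$ is a square-integrable $(\mf_{\varepsilon,t})$-martingale with tensor quadratic variation $\int_0^\cdot\sum_k\langle{\vc G}_k(\varrho_\varepsilon,{\bf m}_\varepsilon),\cdot\rangle\otimes\langle{\vc G}_k(\varrho_\varepsilon,{\bf m}_\varepsilon),\cdot\rangle\,\dif s$ and cross variation with $W_{\varepsilon,k}$ equal to $\int_0^\cdot\langle{\vc G}_k(\varrho_\varepsilon,{\bf m}_\varepsilon),\cdot\rangle\,\dif s$, with analogous statements for $N_\varepsilon$. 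Each of the martingale, quadratic-variation and cross-variation identities amounts to the vanishing — for every $0\le s\le t$ and every bounded continuous function of the trajectory up to time $s$ — of the expectation of an explicit continuous functional of $(\varrho_\varepsilon,{\bf m}_\varepsilon,{\bf u}_\varepsilon,W_\varepsilon,M_\varepsilon,N_\varepsilon)$; thanks to the uniform moment bounds \eqref{eq:apriorivarepsilon} these functionals are uniformly integrable along the a.s.\ Skorokhod limits, so the identities transfer verbatim to the tilde variables. Applying the Hilbert-space version of the representation lemma in the separable spaces $W^{-m,2}(\T^3)$ and $\R$ then yields the two displays above; substituting them into the already-transferred deterministic identities produces \eqref{eq:energy} and \eqref{EI2''} $\tilde\prst$-a.s., completing the verification of Definition~\ref{def:weakMartin}. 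The care required here — and the reason I flag it as the crux — is that $\tilde W_\varepsilon$ must be a Wiener process with respect to precisely the canonical filtration $(\tilde\mf_{\varepsilon,t})$ of the limit variables, so all the expectation identities above must be written in terms of functionals measurable with respect to exactly that filtration before equality of laws is invoked, and the multiplicative, merely $C^1$ coefficient $\mathbb{G}$ (satisfying only \eqref{FG2}) leaves no room for shortcuts via continuity of the It\^o map.
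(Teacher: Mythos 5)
Your proposal is correct and is essentially the paper's argument: the paper disposes of this step by citing Theorem~2.9.1 of \cite{BrFeHobook}, whose proof is precisely the transfer-by-equality-of-laws plus martingale/cross-variation identification scheme you spell out (and which the paper itself reuses explicitly, via the functionals $L_s(\tilde\Phi_\varepsilon)$, in the subsequent limit propositions). So you have simply written out in detail what the paper delegates to that citation.
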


\begin{proof}
Proof of the above proposition directly follows form the Theorem 2.9.1 of the monograph by Breit et. al. \cite{BrFeHobook}.
\end{proof}

\noindent We note that the above proposition implies that the new random variables satisfy the following equations and the energy inequality on the new probability space
\begin{itemize}
\item for all $\phi\in C^\infty(\mathbb{T}^3)$ and $\bm{\varphi}\in C^\infty(\mathbb{T}^3)$ we have
\begin{equation}\label{eq:energyt}
\begin{aligned}
\langle \tilde\varrho_{\varepsilon}(t), \phi\rangle &= \langle\tilde\varrho_{\varepsilon}(0) , \phi\rangle - \int_0^{t}\langle \tilde {\bf m}_{\varepsilon}, \nabla_x \phi\rangle\mathrm{d}s 
\\
\langle \tilde {\bf m}_{\varepsilon}(t), \bm\varphi\rangle &= \langle \tilde {\bf m}_{\varepsilon}(0), \bm{\varphi}\rangle - \int_0^{t} \bigg\langle \frac{\tilde {\bf m}_{\varepsilon}\otimes\tilde {\bf m}_{\varepsilon}}{\tilde \varrho_{\varepsilon}}, \nabla_x  \bm{\varphi} \bigg \rangle\,\mathrm{d}s
+ {\varepsilon}\,\int_0^{t} \langle\mathbb{S}(\nabla_x \tilde{\mathbf{u}}_{\varepsilon})\, \nabla_x  \bm{\varphi}\rangle\mathrm{d}s 
\\&
- \int_0^{t}\langle  \tilde \varrho_{\varepsilon}^\gamma , \mathrm{div}_x\,\bm{\varphi}\rangle\,\mathrm{d}s
+\int_0^{t}\langle\mathbb{G}(\tilde \varrho_{\varepsilon},\tilde {\bf m}_{\varepsilon}), \bm{\varphi}\rangle\, \mathrm{d}\tilde W_{\varepsilon}
\end{aligned}
\end{equation}
$\mathbb{P}$-a.s. for all $t\in[0,T]$,
\item the energy inequality\index{energy inequality} 
\begin{align}\label{EI2t''}
\begin{aligned}
&
-\int_0^T \partial_t \psi \int_{\T^3} \bigg[ \frac{1}{2} \frac{ | \tilde {\bf m}_{\varepsilon} |^2 }{\tilde\varrho_{\varepsilon}} + \frac{\tilde \varrho_{\varepsilon}^\gamma}{\gamma-1} \bigg] \,dx\,dt 
+ {\varepsilon}\, \int_0^T \psi \int_{\T^3}  \mathbb{S} (\nabla_x  \tilde {\bf u}_{\varepsilon}): \nabla_x  \tilde {\bf u}_{\varepsilon} \,dx\,dt \\
&\qquad \leq \psi(0)\int_{\T^3} \bigg[ \frac{1}{2}\frac{|\tilde {\bf m}_{\varepsilon}(0)|^2}{\tilde\varrho_{\varepsilon}(0)}  + \frac{ \tilde \varrho_{\varepsilon}^\gamma(0)}{\gamma-1}  \bigg] \dx
+\sum_{k=1}^\infty\int_0^T \psi \bigg(\int_{\T^3}\mathbf{G}_k (\tilde \varrho_{\varepsilon}, \tilde {\bf m}_{\varepsilon})\cdot \tilde{\bf u}_{\varepsilon}\dx\bigg){\rm d} \tilde W_{\varepsilon,k}\\
&\qquad \qquad + \frac{1}{2}\sum_{k = 1}^{\infty}  \int_0^T \psi
\int_{\T^3} \tilde \varrho_{\varepsilon}^{-1}| \mathbf{G}_k (\tilde \varrho_{\varepsilon}, \tilde {\bf m}_{\varepsilon}) |^2 \,dx\,dt
\end{aligned}
\end{align}
holds for all $\psi \in C_c^{\infty}([0,T))$, $\psi \ge 0$, $\mathbb P$-a.s. 
\end{itemize}

\noindent Next we would like to pass to the limit in $\varepsilon$ in \eqref{eq:energyt} and \eqref{EI2t''}. To do this, we first recall that a-priori estimates \eqref{apv}--\eqref{est:rhobfu22} continue to hold for the new random variables. Thus, making use of the item $(5)$ of Proposition~\ref{prop:skorokhod1}, we conclude that $\tilde \p$-a.s., 
\begin{align*}
&\tilde \varrho_\varepsilon \rightharpoonup \langle {\mathcal{\tilde V}^{\omega}_{t,x}}; \tilde \varrho \rangle, \,\,\text{weakly in}\,\, L^{\gamma}((0,T)\times\T^3),\\
&\tilde {\textbf m}_\varepsilon \rightharpoonup \langle {\mathcal{\tilde V}^{\omega}_{t,x}}; \tilde {\textbf m} \rangle, \,\,\text{weakly in}\,\, L^{\frac{2\gamma}{\gamma+1}}((0,T)\times\T^3).
\end{align*}
In order to pass to the limit in the nonlinear terms present in the equations, we first introduce the corresponding concentration defect measures 
\begin{align*}
\tilde \mu_{C}&= \tilde C -\left\langle \mathcal{\tilde V}^{\omega}_{(\cdot, \cdot)}; \frac{\tilde {\bf m}\otimes \tilde {\bf m}}{\tilde\varrho} \right\rangle dxdt, \,\,
&\tilde \mu_{P}= \tilde P -\langle \mathcal{\tilde V}^{\omega}_{(\cdot, \cdot)};p(\tilde\varrho)\rangle dxdt, \\
\tilde \mu_{E} &= \tilde E- \left\langle \mathcal{\tilde V}^{\omega}_{(\cdot, \cdot)}; \frac{1}{2} \frac{|\tilde {\bf m}|^2}{\tilde\varrho} +P(\tilde\varrho) \right \rangle dx, \,\,
&\tilde \mu_{D}= \tilde D -\left\langle \mathcal{\tilde V}^{\omega}_{(\cdot, \cdot)}; \sum_{k \geq 1} \frac{ |{\bf G}_k (\tilde \varrho, \tilde {\bf m}) |^2 }{\tilde \varrho}\right\rangle dxdt.
\end{align*}
With the help of these concentration defect measures, thanks to the discussion in Subsection~\ref{ym}, we can conclude that $ \mathbb{\tilde P}$-a.s.
\begin{align*}
&\tilde C_\varepsilon \rightharpoonup \left\langle \mathcal{\tilde V}^{\omega}_{(\cdot, \cdot)}; \frac{\tilde {\bf m}\otimes \tilde {\bf m}}{\tilde\varrho} \right\rangle dxdt + \tilde \mu_{C}, \,\, \text{ weak-$*$ in}\, \, L^{\infty}(0,T; \mathcal{M}_b(\T^3)), \\
&\tilde D_\varepsilon \rightharpoonup \left\langle \mathcal{\tilde V}^{\omega}_{(\cdot, \cdot)}; \sum_{k \geq 1} \frac{ |{\bf G}_k (\tilde \varrho, \tilde {\bf m}) |^2 }{\tilde \varrho}\right\rangle dxdt + \tilde \mu_{D}, \,\, \text{weak-$*$ in}\, \, L^{\infty}(0,T; \mathcal{M}_b(\T^3)), \\
&\tilde E_\varepsilon \rightharpoonup \left\langle \mathcal{\tilde V}^{\omega}_{(\cdot, \cdot)}; \frac{1}{2} \frac{|\tilde {\bf m}|^2}{\tilde\varrho} +P(\tilde\varrho) \right \rangle dx + \tilde \mu_{E}, \,\, \text{weak-$*$ in}\, \, L^{\infty}(0,T; \mathcal{M}^+_b(\T^3)), \\
&\tilde P_\varepsilon \rightharpoonup \langle \mathcal{\tilde V}^{\omega}_{(\cdot, \cdot)};p(\tilde\varrho)\rangle dxdt + \tilde \mu_{P}, \,\, \text{weak-$*$ in}\, \, L^{\infty}(0,T; \mathcal{M}_b(\T^3)). 
\end{align*}

\noindent  Regarding the convergence of martingale terms $\tilde M_{\varepsilon}$, appearing in the momentum equation, and $\tilde N_{\varepsilon}$, appearing in the energy inequality, we have following propositions.
\begin{Proposition}
For each $t$, $\tilde N_{\varepsilon}(t) \rightarrow \tilde N(t)$ in $\R$, $\p$-a.s., and $\tilde N(t)$ is a real valued square-integrable continuous martingale with respect to the filtration $(\tilde{\mf}_t)$.
\end{Proposition}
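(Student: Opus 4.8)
The plan is to show two things: first, that the sequence $\tilde N_\varepsilon$ converges to $\tilde N$ pointwise in time $\tilde\prst$-a.s. (upgrading the topology of convergence already obtained in Proposition~\ref{prop:skorokhod1}); second, that the limit $\tilde N$ retains the martingale property with respect to the limiting filtration $(\tilde\mf_t)$. For the first claim, recall that Proposition~\ref{prop:skorokhod1} gives $\tilde N_\varepsilon\to\tilde N$ in $C([0,T];\R)$ $\tilde\prst$-a.s., which trivially implies $\tilde N_\varepsilon(t)\to\tilde N(t)$ in $\R$ for every fixed $t\in[0,T]$, $\tilde\prst$-a.s. For the second claim, I would follow the now-standard route (e.g. Breit et.\ al.\ \cite{BrFeHobook}, or the original argument in \cite{BrHo}): to verify that $\tilde N$ is an $(\tilde\mf_t)$-martingale, it suffices to check that for all $0\le s\le t\le T$ and every bounded continuous functional $\Phi$ on the relevant path space restricted to $[0,s]$,
\begin{equation*}
\tilde\E\Big[\Phi\big(\bfr_s(\cdot)\big)\,\big(\tilde N(t)-\tilde N(s)\big)\Big]=0.
\end{equation*}
Since $\tilde N_\varepsilon=\sum_{k\ge 1}\int_0^\cdot\int_{\tor}\tilde\vu_\varepsilon\cdot\vc G_k(\tilde\varrho_\varepsilon,\tilde{\bf m}_\varepsilon)\,\dif\tilde W_\varepsilon$ is, for each $\varepsilon$, a genuine $(\tilde\mf_t^\varepsilon)$-martingale (this is part of the content of Proposition~\ref{prop:martsol}, i.e.\ the martingale solution property transfers to the new probability space), the analogous identity
\begin{equation*}
\tilde\E\Big[\Phi\big(\bfr_s(\cdot)\big)\,\big(\tilde N_\varepsilon(t)-\tilde N_\varepsilon(s)\big)\Big]=0
\end{equation*}
holds for each $\varepsilon$, where $\Phi$ is now evaluated at the $\varepsilon$-variables.

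The passage to the limit then rests on two ingredients. First, the equality of laws from Proposition~\ref{prop:skorokhod1}(1) lets us replace expectations of bounded continuous functionals of the original (tilde-$\varepsilon$) variables by the same functionals of the limit variables in the obvious way; combined with the $\tilde\prst$-a.s.\ convergence in $\mathcal X$, the continuous functional $\Phi\big(\bfr_s(\cdot)\big)\big(\tilde N_\varepsilon(t)-\tilde N_\varepsilon(s)\big)$ converges $\tilde\prst$-a.s.\ to $\Phi\big(\bfr_s(\cdot)\big)\big(\tilde N(t)-\tilde N(s)\big)$. Second, to move the limit inside the expectation I would invoke uniform integrability: the bound established in the proof of Proposition~\ref{rhoutight13}, namely $\tilde N_\varepsilon\in L^r(\tilde\Omega;C^\alpha(0,T;\R))$ uniformly in $\varepsilon$ for some $r>2$, together with the equality of laws, gives $\sup_\varepsilon\tilde\E[|\tilde N_\varepsilon(t)-\tilde N_\varepsilon(s)|^r]<\infty$, hence uniform integrability of the family, and Vitali's convergence theorem applies. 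This yields the martingale identity for $\tilde N$ with respect to $(\tilde\mf_t)$. The same uniform $L^r$-bound, passed to the limit via Fatou, shows $\tilde N\in L^r(\tilde\Omega;C(0,T;\R))$, in particular $\tilde N$ is square integrable and continuous, completing the statement.

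A small technical point that needs care, and which I expect to be the main obstacle, is the adaptedness of $\tilde N$ to the \emph{limit} filtration $(\tilde\mf_t)$ as opposed to some larger filtration generated by all the tilde-variables. Here one uses the standard observation (see \cite[Section~2.6]{BrFeHobook}) that because the convergence is $\tilde\prst$-a.s.\ in the path space and the filtration $(\tilde\mf_t)$ is the augmented canonical filtration of $(\langle\mathcal{\tilde V}^\omega_{t,x};\tilde\varrho\rangle,\langle\mathcal{\tilde V}^\omega_{t,x};\tilde{\bf m}\rangle,\tilde W,\tilde M,\tilde N)$, the random variable $\tilde N(t)$ is $\tilde\mf_t$-measurable by construction; the only thing to verify is that the functional $\Phi$ in the martingale identity can be taken to depend on $\bfr_s$ of precisely these processes, which is exactly the form in which the $\varepsilon$-level identity from Proposition~\ref{prop:martsol} is available. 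Once the filtration bookkeeping is arranged, the rest is the routine limit passage sketched above.
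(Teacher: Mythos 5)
Your proposal is correct and follows essentially the same route as the paper: reduce the martingale property of $\tilde N$ to the identity $\tilde\E\big[L_s(\tilde\Phi)\,(\tilde N(t)-\tilde N(s))\big]=0$ for bounded continuous functionals of the restricted paths, use the $\varepsilon$-level martingale identity from the transferred martingale solution, and pass to the limit via the $\tilde\prst$-a.s.\ convergence in the path space together with uniform moment bounds and Vitali's convergence theorem. Your additional remarks on square integrability via Fatou and on the filtration bookkeeping are consistent with (and slightly more explicit than) the paper's argument.
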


\begin{proof}
Note that, thanks to Proposition~\ref{prop:skorokhod1}, we have the information $\tilde N_{\varepsilon} \rightarrow  \tilde N $ $\p$-a.s. in $C([0,T]; \R)$. To conclude that $\tilde N(t)$ is a martingale, it is enough to show that
$$
\tilde \E[\tilde N(t)| \mathcal{\tilde F}_s] = \tilde N(s),
$$
for all $t,s \in [0,T]$ with $s \le t$. To prove this, it is sufficient to show that
$$
\tilde \E \Big[ L_s(\tilde \Phi) \big(\tilde N(t)-\tilde N(s)\big) \Big]=0,
$$
where we defined $\tilde \Phi = (\tilde \varrho, \tilde {\bf m}, \tilde W,\tilde M,\tilde N)$, and $L_s$ is any bounded continuous functional, depending only on the values of $\tilde \Phi$ restricted to $[0,s]$, on the path space $\underline{\mathcal{X}} :=\mathcal{X}_{\varrho} \times \mathcal{X}_{{\bf m}} \times \mathcal{X}_{W}\times\mathcal{X}_{M}\times\mathcal{X}_{N}$. Now using the information that $\tilde N_{\varepsilon}(t)$ is a martingale, we know that
$$
\tilde \E \Big[ L_s(\tilde \Phi_{\varepsilon}) \big(\tilde N_{\varepsilon}(t)-\tilde N_{\varepsilon}(s)\big) \Big]=0,
$$
for all bounded continuous functional $L_s$ on the same path space, and $\tilde \Phi_{\varepsilon} = (\tilde \varrho_{\varepsilon}, \tilde {\bf m}_{\varepsilon}, \tilde W_{\varepsilon},\tilde M_{\varepsilon},\tilde N_{\varepsilon})$. Note that Proposition~\ref{prop:skorokhod1} reveals that $\tilde \Phi_{\varepsilon} \rightarrow \tilde \Phi$, $\p$-a.s. in the (weak) topology of $\underline{\mathcal{X}}$. This, in particular, implies that $L_s(\tilde \Phi_{\varepsilon}) \rightarrow L_s(\tilde \Phi)$ $\p$-a.s. This information along with higher order moment estimate e.g. $\tilde N_{\varepsilon}(t) \in L^2(\tilde \Omega)$, with the help of Vitali's convergence theorem, we can pass to the limit in $\varepsilon$ to conclude that $\tilde N(t)$ is a martingale. In this way, we loose the structure of the martingale $\tilde N(t)$, which is expected due of lack of sufficient informations.
\end{proof}
\begin{Proposition}
For each $t$ and $r=m+1$, $\tilde M_{\varepsilon}(t) \rightarrow \tilde M(t)$, $\p$-a.s. in the (weak) topology of $W^{-r,2}(\T^3)$. Moreover, $\tilde M(t)$ is also a $W^{-r,2}(\T^3)$-valued martingale with respect to the filtration $(\tilde{\mf}_t)$.
\end{Proposition}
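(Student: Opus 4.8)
The plan is to follow the strategy of the preceding proposition for $\tilde N_\varepsilon$, the only genuine extra work being that the Jakubowski--Skorokhod theorem delivers $\tilde M_\varepsilon\to\tilde M$ only in the \emph{weak} topology of $W^{\alpha,2}(0,T;W^{-m,2}(\T^3))$ (with $\alpha\in(0,\tfrac12)$, as is forced by the time regularity of stochastic integrals), so that this must first be promoted to convergence at each fixed time in the slightly weaker space $W^{-r,2}(\T^3)$, $r=m+1$. First I would record that, by equality of laws together with the martingale identification already carried out in Proposition~\ref{prop:martsol}, on the new probability space $\tilde M_\varepsilon=\int_0^\cdot\mathbb G(\tilde\varrho_\varepsilon,\tilde{\bf m}_\varepsilon)\,\dif\tilde W_\varepsilon$ is a continuous, square integrable, $W^{-m,2}(\T^3)$-valued $(\tilde{\mf}_t^\varepsilon)$-martingale. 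Combining the Burkholder--Davis--Gundy inequality with \eqref{FG2} and the $\varepsilon$-uniform a priori bounds \eqref{apv}--\eqref{est:rhobfu22} (valid on the new space by equality of joint laws) gives, uniformly in $\varepsilon$ and for all $1\le p<\infty$,
\[
\tilde\E\Big[\sup_{t\in[0,T]}\|\tilde M_\varepsilon(t)\|_{W^{-m,2}}^p\Big]\le C_p,
\qquad
\tilde\E\big[\|\tilde M_\varepsilon(t)-\tilde M_\varepsilon(s)\|_{W^{-m,2}}^p\big]\le C_p\,|t-s|^{p/2}.
\]

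To promote the convergence, I would fix $t\in[0,T]$ and $\bfvarphi\in C^\infty(\T^3;\R^3)$ and solve the momentum balance \eqref{eq:energyt}, evaluated at time $t$, for $\langle\tilde M_\varepsilon(t),\bfvarphi\rangle$, expressing it through $\langle\tilde{\bf m}_\varepsilon(t),\bfvarphi\rangle$, $\langle\tilde{\bf m}_\varepsilon(0),\bfvarphi\rangle$, the time integrals over $[0,t]$ of $\langle\tilde C_\varepsilon,\nabla_x\bfvarphi\rangle$ and $\langle\tilde P_\varepsilon,\divv_x\bfvarphi\rangle$ (recall $\tilde C_\varepsilon=\tilde{\bf m}_\varepsilon\otimes\tilde{\bf m}_\varepsilon/\tilde\varrho_\varepsilon$, $\tilde P_\varepsilon=\tilde\varrho_\varepsilon^\gamma$ by the a.s.\ representation), and the viscous contribution $\varepsilon\int_0^t\langle\mathbb S(\nabla_x\tilde{\bf u}_\varepsilon),\nabla_x\bfvarphi\rangle\,\dif s$. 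For $\tilde\p$-a.e.\ $\omega$: the first two terms converge, for every $t$, by the $C_w([0,T];L^{\frac{2\gamma}{\gamma+1}}(\T^3))$-convergence of $\tilde{\bf m}_\varepsilon$; the two integrals converge, for the fixed $t$, by the weak-$*$ convergence of $\tilde C_\varepsilon$ and $\tilde P_\varepsilon$ in $L^\infty(0,T;\mathcal M_b(\T^3))$ tested against $\mathbf 1_{[0,t]}\,\nabla_x\bfvarphi$ and $\mathbf 1_{[0,t]}\,\divv_x\bfvarphi$; and the viscous term is $O(\sqrt\varepsilon)$ because $\sqrt\varepsilon\,\|\nabla_x\tilde{\bf u}_\varepsilon\|_{L^2((0,T)\times\T^3)}$ is $\tilde\p$-a.s.\ bounded by the energy inequality. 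Thus $\langle\tilde M_\varepsilon(t),\bfvarphi\rangle$ converges $\tilde\p$-a.s., and the limit is identified with $\langle\tilde M(t),\bfvarphi\rangle$ since the Skorokhod convergence of $\tilde M_\varepsilon$ already forces $\int_0^T\eta\,\langle\tilde M_\varepsilon(\cdot),\bfvarphi\rangle\,\dif s\to\int_0^T\eta\,\langle\tilde M(\cdot),\bfvarphi\rangle\,\dif s$ for every $\eta\in L^2(0,T)$. Since, for $\tilde\p$-a.e.\ $\omega$, the right-hand side of that identity is bounded uniformly in $\varepsilon$ in a space continuously embedded in $W^{-r,2}(\T^3)$ and $C^\infty(\T^3;\R^3)$ is dense in $W^{r,2}(\T^3;\R^3)$, I conclude $\tilde M_\varepsilon(t)\rightharpoonup\tilde M(t)$ in $W^{-r,2}(\T^3)$, $\tilde\p$-a.s., for each $t$. (Alternatively, if $\alpha>\tfrac12$, this step is immediate from Lemma~\ref{comp}, which then gives the compact embedding $W^{\alpha,2}(0,T;W^{-m,2})\hookrightarrow C([0,T];W^{-r,2})$.)

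For the martingale property I would argue exactly as for $\tilde N$: it is enough to show, for every $g$ in a countable dense subset of $W^{r,2}(\T^3)$ and every bounded continuous $L_s$ on $\underline{\mathcal X}$ (as in the preceding proof) depending only on the restriction of its argument to $[0,s]$, that $\tilde\E\big[L_s(\tilde\Phi)\,\langle\tilde M(t)-\tilde M(s),g\rangle\big]=0$ for $\tilde\Phi=(\tilde\varrho,\tilde{\bf m},\tilde W,\tilde M,\tilde N)$. The same identity with $\varepsilon$-subscripts ($\tilde\Phi_\varepsilon=(\tilde\varrho_\varepsilon,\tilde{\bf m}_\varepsilon,\tilde W_\varepsilon,\tilde M_\varepsilon,\tilde N_\varepsilon)$) holds because $\tilde M_\varepsilon$ is an $(\tilde{\mf}_t^\varepsilon)$-martingale and $L_s(\tilde\Phi_\varepsilon)$ is $\tilde{\mf}_s^\varepsilon$-measurable; passing $\varepsilon\to0$ uses $L_s(\tilde\Phi_\varepsilon)\to L_s(\tilde\Phi)$ $\tilde\p$-a.s.\ (Proposition~\ref{prop:skorokhod1}), the convergence $\langle\tilde M_\varepsilon(t)-\tilde M_\varepsilon(s),g\rangle\to\langle\tilde M(t)-\tilde M(s),g\rangle$ $\tilde\p$-a.s.\ from the previous step, and uniform integrability of the products (boundedness of $L_s$ and the $L^2$-bound of the first step), so that Vitali's convergence theorem applies. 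Adaptedness of $\tilde M$ to $(\tilde{\mf}_t)$ is immediate from the definition of the filtration, while square integrability and $\tilde\p$-a.s.\ continuity of $\tilde M$ in $W^{-r,2}(\T^3)$ follow from Fatou's lemma applied to the moment bounds of the first step together with the Kolmogorov criterion (Lemma~\ref{lemma01}).

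I expect the main obstacle to be exactly the point highlighted above: since Skorokhod only gives weak convergence in $W^{\alpha,2}(0,T;W^{-m,2})$, one must invoke an additional structural fact (here the momentum balance) to obtain convergence at individual times, and this is what forces the loss of one spatial derivative, i.e.\ the passage from $W^{-m,2}$ to $W^{-r,2}$ with $r=m+1$. Moreover, as for $\tilde N$ and for the same lack of any strong convergence of $(\tilde\varrho_\varepsilon,\tilde{\bf m}_\varepsilon)$, one cannot identify $\tilde M$ with $\int_0^\cdot\mathbb G(\tilde\varrho,\tilde{\bf m})\,\dif\tilde W$; only its martingale property is retained, which is however precisely what item~(k) of Definition~\ref{def:dissMartin} and the weak--strong uniqueness argument require.
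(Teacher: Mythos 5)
Your proof of the martingale property is essentially the paper's own argument: the $\varepsilon$-level identity $\tilde \E \big[ L_s(\tilde \Phi_{\varepsilon}) \langle\tilde M_{\varepsilon}(t)-\tilde M_{\varepsilon}(s), g \rangle \big]=0$, a.s.\ convergence of $L_s(\tilde\Phi_\varepsilon)$ and of the brackets, and Vitali via the BDG-type moment bound coming from \eqref{FG2} and the uniform estimates; the paper does exactly this, including the remark that only the martingale property (not the structure of $\tilde M$) survives. Where you genuinely depart from the paper is the first step, the fixed-time convergence $\tilde M_\varepsilon(t)\to\tilde M(t)$ in $W^{-r,2}(\T^3)$. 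The paper obtains this in one line by invoking Proposition~\ref{prop:skorokhod1} together with the compact embedding of Lemma~\ref{comp}, i.e.\ $W^{\alpha,2}(0,T;W^{-m,2})\subset C([0,T];W^{-r,2})$; as you correctly point out, that lemma requires $\alpha>1/2$, whereas the stochastic integrals $\tilde M_\varepsilon$ are bounded in $W^{\alpha,2}(0,T;W^{-m,2})$ only for $\alpha<1/2$, and moreover the Skorokhod convergence is only in the weak topology of that space, on which point evaluation is not continuous. Your substitute — solving the momentum balance \eqref{eq:energyt} for $\langle\tilde M_\varepsilon(t),\bfvarphi\rangle$ and using the a.s.\ convergences of $\tilde{\bf m}_\varepsilon$ in $C_w([0,T];L^{2\gamma/(\gamma+1)})$, of $\tilde C_\varepsilon,\tilde P_\varepsilon$ weak-$*$ in $L^\infty(0,T;\mathcal M_b)$, the vanishing of the viscous term, and the time-averaged identification with $\tilde M$ — is legitimate (Proposition~\ref{prop:martsol} is established before this point, so there is no circularity) and in fact repairs the weak point in the paper's citation, at the price of a longer argument and of the same loss of one derivative ($r=m+1$) that the statement already concedes. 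Two small points to tighten if you write this up: the a.s.\ smallness of the viscous term should be argued from the uniform moment bound $\E[(\varepsilon\int_0^T\|\nabla_x\tilde{\bf u}_\varepsilon\|_{L^2}^2\,\dif t)^p]\le C$ (convergence in probability plus Borel--Cantelli along the subsequence), not from a pathwise bound "by the energy inequality"; and the identification $A(t)=\langle\tilde M(t),\bfvarphi\rangle$ obtained from testing in time is a priori only for a.e.\ $t$, so you need the continuity of the representative of $\tilde M$ (which your Kolmogorov/Fatou remark supplies) to upgrade it to every $t$. Both are at the level of rigor the paper itself adopts.
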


\begin{proof}
As before, thanks to Proposition~\ref{prop:skorokhod1} and the compact embedding given in Lemma~\ref{comp}, we conclude that for each $t$, $ \tilde M_{\varepsilon}(t) \rightarrow \tilde M(t)$, $\p$-a.s. in the topology of $W^{-r,2}(\T^N)$. To show that $\tilde M(t)$ is a martingale, it is sufficient to prove that for all $j\ge1$
$$
\tilde \E \Big[ L_s(\tilde \Phi) \big <\tilde M(t)-\tilde M(s), g_j \big> \Big]=0,
$$
where $g_j$'s are orthonormal basis for the space $W^{-r,2}(\T^N)$, and the bracket $\big<\cdot, \cdot \big>$ denotes inner product in the same space. Again, we use the fact that 
$$
\tilde \E \Big[ L_s(\tilde \Phi_{\varepsilon}) \big<\tilde M_{\varepsilon}(t)-\tilde M_{\varepsilon}(s), g_j \big> \Big]=0,
$$
for all $j \ge 1$. Then we can pass to the limit in $\varepsilon$, as before, to conclude that $\tilde M(t)$ is a martingale. Indeed, we can show uniform integrabilty using BDG inequality:
\begin{align*}
\tilde \E \Big[\big|\big< \tilde M_\varepsilon(t), g_j \big>\big|^p  \Big] &=\tilde \E \bigg[\bigg|\Big< \int_0^t {\tn{G}}(\tilde\varrho_\varepsilon, \tilde {\textbf m}_\varepsilon )\,\Dif \tilde W, g_j \Big>\bigg|^p  \bigg] 
 \le C \tilde \E \Bigg[\sup_{0\le t \le T} \bigg \| \int_0^t  {\tn{G}}(\tilde \varrho_\varepsilon, \tilde {\textbf m}_\varepsilon )\,\Dif \tilde W \bigg\|^p_{W^{-r,2}(\T^3)}\Bigg]  \\
&\le C \tilde \E \Bigg[\bigg(\int_0^T \| {\tn{G}}(\tilde \varrho_\varepsilon, \tilde {\textbf m}_\varepsilon )\|^2_{L_2(\mathfrak{U}, W^{-r,2}(\T^3))} \,ds \bigg)^{p/2}\Bigg] \le C.
\end{align*}
\end{proof}
Note that collating all the above informations, we can pass to the limit in both equations of \eqref{eq:energyt} to infer that
\begin{equation*} 
\begin{aligned}
&\int_{\T^3} \langle \mathcal{\tilde V}^{\omega}_{t,x}; \varrho \rangle \, \varphi (\tau, \cdot) \,dx -\int_{\T^3} \langle \mathcal{\tilde V}^{\omega}_{0,x}; \varrho \rangle\, \varphi (0,\cdot)\,dx 
= \int_{0}^{\tau} \int_{\T^3}  \langle \mathcal{\tilde V}^{\omega}_{t,x}; \textbf{m}\rangle \cdot \nabla_x \varphi \,dx \,dt,
\end{aligned}
\end{equation*}
holds $\tilde{\p}$-a.s., for all $\tau \in[0,T)$, and for all $\varphi \in C^{\infty}(\T^3)$. Moreover,
\begin{equation*} 
\begin{aligned}
&\int_{\T^3} \langle \mathcal{\tilde V}^{\omega}_{\tau,x};\textbf{m}\rangle \cdot \bm{\varphi}(\tau, \cdot) dx - \int_{\T^3} \langle \mathcal{\tilde V}^{\omega}_{0,x};\textbf{m}\rangle \cdot \bm{\varphi}(0,\cdot) dx \\
&\qquad = \int_{0}^{\tau} \int_{\T^3} \left[\left\langle \mathcal{\tilde V}^{\omega}_{t,x}; \frac{\textbf{m}\otimes \textbf{m}}{\varrho} \right\rangle: \nabla_x \bm{\varphi} + \langle \mathcal{\tilde V}^{\omega}_{t,x}; p(\varrho)\rangle \divv_x \bm{\varphi} \right] dx dt \\
&\qquad \qquad+ \int_{\T^3} \bm{\varphi}\,\int_0^{\tau} d\tilde{M}(t) \,dx+ \int_{0}^{\tau} \int_{\T^3}  \nabla_x \bm{\varphi}: d \Big(\tilde{\mu}_C + \tilde{\mu}_P \mathbb{I}\Big),
\end{aligned}
\end{equation*}
holds $\tilde{\p}$-a.s., for all $\tau \in [0,T)$, and for all $\bm{\varphi} \in C^{\infty}(\T^3;\mathbb{R}^3)$, where $\Big(\tilde{\mu}_C + \tilde{\mu}_P \mathbb{I}\Big) \in L^{\infty}_{w^*}\big([0,T]; \mathcal{M}_b({\T^3} )\big)$, $\p$-a.s., is a tensor--valued measure,
Therefore, we conclude that \eqref{first condition measure-valued solution}, and \eqref{second condition measure-valued solution} hold. To conclude \eqref{third condition measure-valued solution}, we proceed as follows. First, note that we can pass to the limit in $\varepsilon$ in \eqref{EI2t''} to obtain the following energy inequality in the new probability space:
\begin{equation}
\label{en_new}
\begin{aligned}
& -\int_0^T \partial_t \psi \Bigg[\int_{\T^3} \left\langle \mathcal{\tilde V}^{\omega}_{t,x}; \frac{1}{2} \frac{|\textbf{m}|^2}{\varrho} +P(\varrho) \right \rangle \,dx + \mathcal{ \tilde D}(t) \Bigg]\,dt 
\le \psi(0) \int_{\T^3} \left\langle \mathcal{\tilde V}^{\omega}_{0,x}; \frac{1}{2} \frac{|\textbf{m}|^2}{\varrho} +P(\varrho) \right \rangle \,dx \\
& \quad + \frac{1}{2} \int_0^{T} \psi \bigg(\int_{\T^3} \sum_{k = 1}^{\infty} \left\langle \mathcal{\tilde V}^{\omega}_{\tau,x};\varrho^{-1}| \mathbf{G}_k (\varrho, {\textbf m}) |^2 \right\rangle \bigg)\, dt\,dx + \frac12\int_0^{T} \psi \int_{\T^3} d \tilde \mu_D + \int_0^{T} \psi\, d{\tilde N}.
\end{aligned}
\end{equation} 
Here $0\le \mathcal{\tilde D}(t):= \tilde \mu_{E}(t)(\T^3)$. Next, we choose a specific test function $\psi_{\tau}: [0,T] \rightarrow \R$ as follows: Fix any $t_0$ and $t$ such that $0<t_0<t <T$. For any $\tau >0$ with $0<t_0 -\tau < t + \tau <T$, let 
$\psi_{\tau}$ be a Lipschitz function that is linear on $[t_0 -\tau, t_0] \cup [t, t+\tau]$ and satisfies
\begin{align*}
\psi_\tau (t)= 
\begin{cases} 
0,  \, & \text{if } t \in [0, t_0 -\tau] \,\,\text{or} \,\, t \in [t+\tau, T] \\
1, \, & \text{if } t \in [t_0, t].
\end{cases}
\end{align*}
Then, via a standard regularization argument, we see that $\psi_\tau$ is an admissible test function in \eqref{en_new}. Inserting this $\psi_\tau$ into \eqref{en_new} gives
\begin{align*}
&\frac{1}{\tau} \int_t^{t + \tau} \Bigg[\int_{\T^3} \left\langle \mathcal{\tilde V}^{\omega}_{t,x}; \frac{1}{2} \frac{|\textbf{m}|^2}{\varrho} +P(\varrho) \right \rangle \,dx + \mathcal{ \tilde D}(t)\Bigg]\,dt \\
& \qquad \le \frac{1}{\tau} \int_{t_0 - \tau}^{t_0} \Bigg[\int_{\T^3} \left\langle \mathcal{\tilde V}^{\omega}_{t,x}; \frac{1}{2} \frac{|\textbf{m}|^2}{\varrho} +P(\varrho) \right \rangle \,dx + \mathcal{ \tilde D}(t)\Bigg]\,dt + \frac12\int_{t_0 - \tau}^{t +\tau} \int_{\T^3} d \tilde \mu_D + \int_{t_0 - \tau}^{t +\tau}  d{\tilde N}(s) \\
& \qquad \quad + \frac{1}{2} \int_{t_0 - \tau}^{t +\tau} \bigg(\int_{\T^3} \sum_{k = 1}^{\infty} \left\langle \mathcal{\tilde V}^{\omega}_{\tau,x};\varrho^{-1}| \mathbf{G}_k (\varrho, {\textbf m}) |^2 \right\rangle \bigg)\, dt\,dx.
\end{align*}
Now taking limit as $\tau \rightarrow 0^+$, we conclude that \eqref{third condition measure-valued solution} holds. Note that for $t_0=0$, we need to slightly modify the test function. In this case we take $\psi_\tau$ which takes the value $1$ in $[0,t]$, linear on $[t, t+\tau]$, and zero otherwise and apply the same argument as before.

Therefore, we are only left with the verifications of \eqref{fourth condition measure-valued solutions}, and item (k) of Definition~\ref{def:dissMartin}. To proceed, we start with the following lemma.

\begin{Lemma}\label{rhoutight1311}
Given a stochastic process $f$, as in item $(k)$ of Definition~\ref{def:dissMartin}
$$
\mathrm{d}f  = D^d_tf\,\mathrm{d}t  + \mathbb{D}^s_tf\,\mathrm{d} \tilde W,
$$
the cross variation with $\tilde M$ is given by 
\begin{align*}
\Big<\hspace{-0.14cm}\Big<f(t),  \tilde M(t)  \Big>\hspace{-0.14cm}\Big>
=\sum_{i,j} \Bigg(\sum_{k = 1}^{\infty}  \int_0^t \left \langle \left\langle \mathcal{\tilde V}^{\omega}_{s,x}; \mathbf{G}_k (\tilde \varrho,\tilde {\bf m})\right\rangle, g_i \right \rangle \, \left \langle \mathbb{D}_t^s f(e_k), g_j \right\rangle \,ds \Bigg) \, g_i \otimes g_j.
\end{align*}
\end{Lemma}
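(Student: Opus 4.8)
The plan is to sidestep any direct analysis of the limit martingale $\tilde M = M^1_E$ — whose structure has been irretrievably lost in the passage to the limit — and instead to first identify the cross variation of $\tilde M$ with the \emph{driving noise} $\tilde W$, and only then to recover the cross variation with the general semimartingale $f$ by an ordinary It\^o computation performed on the limit stochastic basis $(\tilde\Omega,\tilde{\mf},(\tilde{\mf}_t),\tilde\prst)$, where both $\tilde W$ and $f$ are at our disposal.

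First I would work at the approximate level. By Proposition~\ref{prop:martsol} one has, for every $\varepsilon$, $\tilde M_\varepsilon(t)=\int_0^t\mathbb{G}(\tilde\varrho_\varepsilon,\tilde{\bf m}_\varepsilon)\,\mathrm{d}\tilde W_\varepsilon$. Projecting onto the orthonormal basis $\{g_i\}$ of $W^{-r,2}(\T^3)$ and using $\langle\!\langle\tilde W_{\varepsilon,k},\tilde W_{\varepsilon,l}\rangle\!\rangle_t=\delta_{kl}\,t$, the It\^o product rule shows that, for every $k\geq1$ and every $i$, the real-valued process
\[
t\longmapsto \tilde W_{\varepsilon,k}(t)\,\langle\tilde M_\varepsilon(t),g_i\rangle-\int_0^t\langle\mathbf{G}_k(\tilde\varrho_\varepsilon,\tilde{\bf m}_\varepsilon),g_i\rangle\,\mathrm{d}s
\]
is a continuous $(\tilde{\mf}_t^\varepsilon)$-martingale, and one records that $\langle\mathbf{G}_k(\tilde\varrho_\varepsilon,\tilde{\bf m}_\varepsilon),g_i\rangle=\langle\langle\delta_{[\tilde\varrho_\varepsilon,\tilde{\bf m}_\varepsilon]};\mathbf{G}_k(\varrho,{\bf m})\rangle,g_i\rangle$. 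I would then express this martingale property in the usual functional form — testing against an arbitrary bounded continuous functional $L_s$ of $(\tilde\varrho_\varepsilon,\tilde{\bf m}_\varepsilon,\tilde W_\varepsilon,\tilde M_\varepsilon,\tilde N_\varepsilon)$ restricted to $[0,s]$, with $0\le s\le t$ — and let $\varepsilon\to0$, exactly as in the identification of $\tilde M$ and $\tilde N$ as martingales above. The required convergences are: the $\tilde\prst$-a.s.\ convergences of Proposition~\ref{prop:skorokhod1} (in particular $\tilde W_{\varepsilon,k}\to\tilde W_k$ in $C([0,T];\R)$ and $\langle\tilde M_\varepsilon(t),g_i\rangle\to\langle\tilde M(t),g_i\rangle$); item~(4) of Proposition~\ref{prop:skorokhod1} applied with $H=\mathbf{G}_k$, which has at most linear growth by \eqref{FG2}, so that $\mathbf{G}_k(\tilde\varrho_\varepsilon,\tilde{\bf m}_\varepsilon)\rightharpoonup\langle\tilde{\mathcal{V}}^\omega_{t,x};\mathbf{G}_k(\varrho,{\bf m})\rangle$ in $L^r((0,T)\times\T^3)$ with $r>1$, whence the $\mathrm{d}s$-integral converges; and Vitali's theorem, the equi-integrability being supplied by the BDG bound $\tilde{\mathbb{E}}[\sup_{[0,T]}\|\tilde M_\varepsilon\|_{W^{-r,2}}^p]\le C$ together with the \emph{a priori} estimates \eqref{apv}--\eqref{est:rhobfu22}. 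Since the admissible test functionals form a determining class for $\tilde{\mf}_s$, this will give, $\tilde\prst$-a.s.\ and for all $k\geq1$, $i$,
\[
\langle\!\langle\tilde W_k,\langle\tilde M,g_i\rangle\rangle\!\rangle_t=\int_0^t\langle\langle\tilde{\mathcal{V}}^\omega_{s,x};\mathbf{G}_k(\varrho,{\bf m})\rangle,g_i\rangle\,\mathrm{d}s .
\]

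It remains to bring in the process $f$. On the limit basis $f$ is a semimartingale driven by $\tilde W$, and continuity of the stochastic integral under the bounded linear map $v\mapsto\langle v,g_j\rangle$ — together with the integrability of $\mathbb{D}^s f$ postulated in item~(k) — gives the scalar decomposition $\mathrm{d}\langle f,g_j\rangle=\langle D^d f,g_j\rangle\,\mathrm{d}t+\sum_{k\geq1}\langle\mathbb{D}^s f(e_k),g_j\rangle\,\mathrm{d}\tilde W_k$. Since $\langle\tilde M,g_i\rangle$ is a martingale and finite-variation parts do not enter the covariation, the Kunita--Watanabe identity $\langle\!\langle\int H\,\mathrm{d}\tilde W_k,N\rangle\!\rangle=\int H\,\mathrm{d}\langle\!\langle\tilde W_k,N\rangle\!\rangle$, bilinearity of the covariation, and the previous display yield
\[
\langle\!\langle\langle\tilde M,g_i\rangle,\langle f,g_j\rangle\rangle\!\rangle_t=\sum_{k\geq1}\int_0^t\langle\langle\tilde{\mathcal{V}}^\omega_{s,x};\mathbf{G}_k(\varrho,{\bf m})\rangle,g_i\rangle\,\langle\mathbb{D}^s f(e_k),g_j\rangle\,\mathrm{d}s ,
\]
and summing $\sum_{i,j}(\,\cdot\,)\,g_i\otimes g_j$, with $\tilde M=M^1_E$, is precisely the asserted identity.

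The main obstacle is the structural one emphasised in the introduction: the approximations $\tilde M_\varepsilon$ are genuine stochastic integrals, but the limit $\tilde M$ is known only to be \emph{some} square-integrable $(\tilde{\mf}_t)$-martingale, so one cannot compute $\langle\!\langle f,\tilde M\rangle\!\rangle$ by It\^o calculus on $\tilde M$ and must route the identification through the driving noise, for which the limiting filtration and Wiener process survive the limit. The delicate points in carrying this out are: (i) arranging the functional/martingale formulation so that the $\varepsilon$-level test functionals involve only the approximate variables and never $f$, which avoids any mismatch between $(\tilde{\mf}_t^\varepsilon)$ and $(\tilde{\mf}_t)$; (ii) producing enough equi-integrability to pass to the limit in the martingale identity — this is exactly why item~(k) imposes its integrability hypotheses on $D^d f$ and $\mathbb{D}^s f$; and (iii) checking that the weak $L^r$-convergence of $\mathbf{G}_k(\tilde\varrho_\varepsilon,\tilde{\bf m}_\varepsilon)$ to $\langle\tilde{\mathcal{V}}^\omega;\mathbf{G}_k\rangle$ (rather than to $\mathbf{G}_k(\tilde\varrho,\tilde{\bf m})$) is preserved under pairing with $g_i$ and time integration — which is where the Young measure enters the final formula.
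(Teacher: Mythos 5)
Your proof is correct, but it reaches the identity by a genuinely different decomposition than the paper. The paper works directly with the pair $(\tilde M_\varepsilon,f)$: using Da Prato--Zabczyk it computes the bracket $\big<\hspace{-0.14cm}\big<\langle\tilde M_\varepsilon,g_i\rangle,\langle f,g_j\rangle\big>\hspace{-0.14cm}\big>$ explicitly at the $\varepsilon$-level (formula \eqref{imp_01}), rewrites it as the statement that $\langle\tilde M_{\varepsilon}(t),g_i\rangle\langle f(t),g_j\rangle-\sum_{k}\int_0^t\langle\mathbf{G}_k(\tilde\varrho_\varepsilon,\tilde{\bf m}_\varepsilon),g_i\rangle\langle\mathbb{D}^s_tf(e_k),g_j\rangle\,ds$ is a martingale, tests against $L_s(\tilde\Phi_\varepsilon)$ and passes to the limit via item (4) of Proposition~\ref{prop:skorokhod1}, the uniform $L_2(\mathfrak U;W^{-m,2})$ bound on $\mathbb G$ and Vitali --- exactly the ingredients you invoke. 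You instead first identify the bracket of $\tilde M$ with the driving noise, $\big<\hspace{-0.14cm}\big<\tilde W_k,\langle\tilde M,g_i\rangle\big>\hspace{-0.14cm}\big>_t=\int_0^t\big\langle\langle\mathcal{\tilde V}^{\omega}_{s,x};\mathbf{G}_k(\tilde\varrho,\tilde{\bf m})\rangle,g_i\big\rangle\,ds$, by the same functional-testing limit argument, and only afterwards bring in $f$ through the Kunita--Watanabe/associativity identity on the limit basis. What each buys: your route keeps $f$ entirely out of the $\varepsilon$-level identities, which is cleaner --- in \eqref{imp_01} the paper pairs $f$ (driven by $\tilde W$ and adapted to $(\tilde{\mf}_t)$) with $\tilde M_\varepsilon$ (a $(\tilde{\mf}_t^\varepsilon)$-martingale driven by $\tilde W_\varepsilon$), a mixing of filtrations that your scheme avoids --- and it yields the stronger intermediate identification of the bracket with $\tilde W$; the mild price is that you must know (and should state) that $\tilde W$ remains an $(\tilde{\mf}_t)$-cylindrical Wiener process and that $\langle\tilde M,g_i\rangle$ is a continuous square-integrable $(\tilde{\mf}_t)$-martingale, both of which are already part of the identification machinery behind Proposition~\ref{prop:martsol} and the preceding propositions, so the argument goes through.
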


\begin{proof}
First note that, according to Da Prato $\&$ Zabczyk \cite{Daprato}, and the definition of $\tilde M_{\varepsilon} $, we have 
\begin{align*}
\Big<\hspace{-0.14cm}\Big< f(t),  \tilde M_{\varepsilon}(t) \Big>\hspace{-0.14cm}\Big> = \sum_{i,j} \Big<\hspace{-0.14cm}\Big< \left \langle \tilde M_{\varepsilon}(t), g_i \right \rangle , \, \left \langle f(t), g_j \right \rangle \Big>\hspace{-0.14cm}\Big>  \,g_i \otimes g_j.
\end{align*} 
Moreover, 
\begin{equation}
\begin{aligned}
\label{imp_01}
\Big<\hspace{-0.14cm}\Big< \left \langle \tilde M_{\varepsilon}(t), g_i \right \rangle , \, \left \langle f(t), g_j \right \rangle \Big>\hspace{-0.14cm}\Big>  & = \Big<\hspace{-0.14cm}\Big< \sum_{k = 1}^{\infty} \int_0^t  \left \langle \mathbf{G}_k (\tilde \varrho_{\varepsilon},\tilde {\bf m}_{\varepsilon}), g_i \right \rangle \mathrm{d} \tilde W_k \, , \,   \sum_{k = 1}^{\infty} \int_0^t  \left \langle \mathbb{D}^s_tf(e_k), g_j \right \rangle\mathrm{d} \tilde W_k  \Big>\hspace{-0.14cm}\Big>  \\
& = \sum_{k = 1}^{\infty} \int_0^t   \left \langle \mathbf{G}_k (\tilde \varrho_{\varepsilon},\tilde {\bf m}_{\varepsilon}), g_i \right \rangle \, \left \langle \mathbb{D}^s_tf(e_k), g_j \right \rangle \,ds.
\end{aligned} 
\end{equation}
Therefore
\begin{align*}
\Big<\hspace{-0.14cm}\Big< f(t),  \tilde M_{\varepsilon}(t) \Big>\hspace{-0.14cm}\Big> = \sum_{i,j} \Bigg( \sum_{k = 1}^{\infty} \int_0^t  \left \langle \mathbf{G}_k (\tilde \varrho_{\varepsilon},\tilde {\bf m}_{\varepsilon}), g_i \right \rangle \, \left \langle \mathbb{D}^s_tf(e_k), g_j \right \rangle \,ds \Bigg) \,g_i \otimes g_j.
\end{align*}
Observe that \eqref{imp_01} equivalently implies that
\begin{align*}
&\tilde \E \Big[ L_s(\tilde \Phi_{\varepsilon}) \Big( \big<\tilde M_{\varepsilon}(t), g_i \big> \big<f(t), g_j \big> - \big<\tilde M_{\varepsilon}(s), g_i \big> \big<f(s), g_j \big> \\
& \hspace{6cm}- \sum_{k = 1}^{\infty} \int_s^t  \big<\mathbf{G}_k (\tilde \varrho_{\varepsilon}, \tilde {\bf m}_{\varepsilon}), g_i \big> \big<\mathbb{D}^s_tf(e_k), g_j \big> \,ds \Big)\Big]=0.
\end{align*}
We may apply item (4) of Proposition~\ref{prop:skorokhod1}, to the composition $\mathbf{G}_k (\tilde \varrho_{\varepsilon},\tilde {\bf m}_{\varepsilon})$, $k \in \N$. This gives 
$$
\mathbf{G}_k (\tilde \varrho_{\varepsilon},\tilde {\bf m}_{\varepsilon}) \rightharpoonup
\Big \langle \mathcal{\tilde V}^{\omega}_{t,x} ; \mathbf{G}_k (\tilde \varrho,\tilde {\bf m}) \Big \rangle \,\, \mbox{weakly in} \,\,L^q((0,T)\times \T^3),
$$
$\p$-a.s., for some $q>1$. Moreover, for $m>3/2$, we have by Sobolev embedding 
\begin{align*}
\tilde \E \Big[\int_0^T \| \mathbb{G}(\tilde \varrho_{\varepsilon}, \tilde {\bf m}_{\varepsilon}) \|^2_{L_2(\mathfrak{U}; W^{-m,2})}\,dt \Big]\le \tilde \E \Big[\int_0^T (\tilde \varrho_{\varepsilon})_{\T^3} \int_{\T^3} (\tilde \varrho_{\varepsilon} + \tilde \varrho_{\varepsilon} |\tilde {\bf u}_{\varepsilon}|^2)\,dx\,dt \Big]\le c(r).
\end{align*}
This implies that $\p$-a.s.
$$
\mathbf{G}_k (\tilde \varrho,\tilde {\bf m}) \rightharpoonup  \Big \langle \mathcal{\tilde V}^{\omega}_{t,x} ; \mathbf{G}_k (\tilde \varrho,\tilde {\bf m}) \Big \rangle \,\, \mbox{weakly in} \,\,L^2((0,T); W^{-m,2}(\T^3)).
$$ 
This will imply that, thanks to uniform integrabilty, we can pass to the limit in $\ep \rightarrow 0$ to conclude
\begin{align*}
\tilde \E \Big[ L_s(\tilde \Phi) & \Big( \big<\tilde M(t), g_i \big> \big<f(t), g_j \big> - \big<\tilde M(s), g_i \big> \big<f(s), g_j \big> \\
& \hspace{4cm}- \sum_{k = 1}^{\infty} \int_s^t  \big<\left\langle \mathcal{\tilde V}^{\omega}_{s,x}; \mathbf{G}_k (\tilde \varrho,\tilde {\bf m})\right\rangle, g_i \big> \big<\mathbb{D}^s_tf(e_k), g_j \big> \,ds \Big)\Big]=0.
\end{align*}
This finishes the proof the lemma.

\end{proof}

\begin{Lemma}\label{rhoutight131}
The concentration defect $0\le \mathcal{\tilde D}(\tau):= \tilde \mu_{E}(\tau)(\T^3)$ dominates defect measures $\tilde \mu_{C}, \tilde \mu_{D}$, and $\tilde \mu_{P}$ in the sense of Lemma~\ref{lemma001}. More precisely, there exists a constant $C>0$ such that
\begin{equation*} 
 \int_{0}^{\tau} \int_{\T^3} d|\tilde \mu_C| + \int_{0}^{\tau} \int_{\T^3} d|\tilde \mu_D| + \int_{0}^{\tau} \int_{\T^3} d|\tilde \mu_P| \leq C \int_{0}^{\tau} \mathcal{\tilde D}(\tau)\,dt,
\end{equation*}	
for every $\tau \in (0,T)$, $\p$-a.s.
\end{Lemma}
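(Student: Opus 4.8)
The plan is to recognise the energy density as the natural dominating function of Lemma~\ref{lemma001} and to control, pointwise on the phase space $\mathcal{S}$, each nonlinearity entering $\tilde\mu_C,\tilde\mu_D,\tilde\mu_P$ by a fixed multiple of it. Set
\[
G(\varrho,{\bf m}):=\tfrac12\frac{|{\bf m}|^2}{\varrho}+P(\varrho),\qquad(\varrho,{\bf m})\in\mathcal{S},
\]
with the usual convention $|{\bf m}|^2/\varrho=0$ when ${\bf m}=0$. Along the approximations one has $\tilde E_\varepsilon=G(\tilde\varrho_\varepsilon,\tilde{\bf m}_\varepsilon)\geq0$ (recall that $\tilde{\bf m}_\varepsilon$ vanishes on $\{\tilde\varrho_\varepsilon=0\}$), the sequence $(\tilde\varrho_\varepsilon,\tilde{\bf m}_\varepsilon)$ generates the Young measure $\mathcal{\tilde V}^\omega_{t,x}$ by Proposition~\ref{prop:skorokhod1}, and the energy estimate \eqref{eq:apriorivarepsilon} — which transfers to the new probability space — gives $\sup_{\varepsilon\in(0,1)}\tilde\E\big[\|G(\tilde\varrho_\varepsilon,\tilde{\bf m}_\varepsilon)\|_{L^1((0,T)\times\T^3)}^p\big]<\infty$. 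By the discussion of Subsection~\ref{ym}, $\tilde E_\varepsilon\rightharpoonup\langle\mathcal{\tilde V}^\omega_{t,x};G\rangle\,\mathrm{d}x+\tilde\mu_E$ weak-$*$; regarding all of these objects as elements of $\mathcal{M}_b((0,T)\times\T^3)$ via the continuous inclusion $L^\infty(0,T;\mathcal{M}_b(\T^3))\hookrightarrow\mathcal{M}_b((0,T)\times\T^3)$ (the weak-$*$ topologies being compatible on bounded sets), $\tilde\mu_E$ is precisely the concentration defect $G_\infty$ attached to $G(\tilde\varrho_\varepsilon,\tilde{\bf m}_\varepsilon)$ in the sense of Lemma~\ref{lemma001}.

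I would then record the elementary pointwise bounds on $\mathcal{S}$,
\[
\Big|\frac{m_i m_j}{\varrho}\Big|\leq\frac{|{\bf m}|^2}{\varrho}\leq2\,G(\varrho,{\bf m}),\qquad p(\varrho)=(\gamma-1)\,P(\varrho)\leq(\gamma-1)\,G(\varrho,{\bf m}),
\]
and, using the growth hypothesis \eqref{FG2} together with the trivial inequality $\varrho\leq C(1+P(\varrho))$,
\[
\sum_{k\geq1}\frac{|{\bf G}_k(x,\varrho,{\bf m})|^2}{\varrho}\;\leq\;C\Big(\sum_{k\geq1}\beta_k^2\Big)\Big(\varrho+\frac{|{\bf m}|^2}{\varrho}\Big)\;\leq\;C\big(1+G(\varrho,{\bf m})\big),\qquad x\in\T^3.
\]
Now apply Lemma~\ref{lemma001} on $\mathcal{O}=(0,T)\times\T^3$ with ${\bf U}_\varepsilon=(\tilde\varrho_\varepsilon,\tilde{\bf m}_\varepsilon)$: for $\tilde\mu_C$ take (componentwise) $F=m_i m_j/\varrho$ with dominating function $2G$; for $\tilde\mu_P$ take $F=p(\varrho)$ with $(\gamma-1)G$; for $\tilde\mu_D$ take $F=\sum_{k\geq1}\varrho^{-1}|{\bf G}_k|^2$ with dominating function $C(1+G)$. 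Since the constant function $1$ converges strongly in $L^1((0,T)\times\T^3)$ on the bounded cylinder, its concentration defect vanishes, so the concentration defects of $2G$, $(\gamma-1)G$ and $C(1+G)$ evaluated along $(\tilde\varrho_\varepsilon,\tilde{\bf m}_\varepsilon)$ equal $2\tilde\mu_E$, $(\gamma-1)\tilde\mu_E$ and $C\tilde\mu_E$ respectively. Lemma~\ref{lemma001} then gives, $\tilde\p$-a.s.\ and as inequalities between nonnegative measures on $(0,T)\times\T^3$,
\[
|\tilde\mu_C|\leq C\,\tilde\mu_E,\qquad|\tilde\mu_P|\leq C\,\tilde\mu_E,\qquad|\tilde\mu_D|\leq C\,\tilde\mu_E,
\]
with $C$ depending only on $\gamma$ and $\sum_{k\geq1}\beta_k^2$. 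Evaluating these measure inequalities on the Borel set $[0,\tau]\times\T^3$ and recalling $\mathcal{\tilde D}(t)=\tilde\mu_E(t)(\T^3)$ yields the asserted estimate for every $\tau\in(0,T)$.

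The transfer of the a priori bound, the algebraic inequalities and the final test-set step are routine. The one point that I expect to require care is the noise term: one has to extract from \eqref{FG2} a bound of the form $\varrho^{-1}|{\bf G}_k(\cdot,\varrho,{\bf m})|^2\lesssim\beta_k^2\big(\varrho+|{\bf m}|^2/\varrho\big)$ that is valid on \emph{all} of $\mathcal{S}$, in particular with the correct decay as $\varrho\to0$ on $\{{\bf m}=0\}$ — this is the same structural input that makes the noise–energy term in \eqref{en_01} finite — and then to absorb the unavoidable additive constant by enlarging the dominating function from $G$ to $1+G$, observing that this does not change the concentration defect on the bounded space–time cylinder. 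This is the only step where the assumptions on the coefficients ${\bf G}_k$ genuinely enter, and it is precisely the reason why $\mathcal{\tilde D}$, rather than the bare integral of the energy, is the right quantity to dominate all three defect measures at once.
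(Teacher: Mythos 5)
Your proposal is correct and follows essentially the same route as the paper: the pointwise dominations $|{\bf m}\otimes{\bf m}/\varrho|\le 2\big(\tfrac12|{\bf m}|^2/\varrho+P(\varrho)\big)$, $p(\varrho)\le(\gamma-1)P(\varrho)$ and, via \eqref{FG2}, $\sum_k\varrho^{-1}|{\bf G}_k|^2\le c\big(\varrho+|{\bf m}|^2/\varrho\big)\le c\big(\tfrac12|{\bf m}|^2/\varrho+P(\varrho)\big)+1$, combined with Lemma~\ref{lemma001} applied with the energy as dominating function, which is exactly the paper's argument (it merely compresses the $\tilde\mu_C,\tilde\mu_P$ part into a citation of the deterministic case). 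Your extra remarks — transferring the energy bound to the new probability space, absorbing the additive constant since a constant has no concentration defect, and then evaluating the resulting measure inequalities on $[0,\tau]\times\T^3$ — just make explicit the steps the paper leaves implicit.
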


\begin{proof}
Following deterministic argument, we can conclude that $\tilde \mu_{E}$ dominates defect measures $\tilde \mu_{C}, \tilde \mu_P$. To show the dominance of $\tilde \mu_{E}$ over $\tilde \mu_{D}$, observe that by virtue of hypotheses (\ref{FG2}), the function
\[
[\vr, \vc{m}] \mapsto \sum_{k \geq 1} \frac{ |{\bf G}_k (\varrho, {\bf m}) |^2 }{\varrho} \ \mbox{is continuous},
\]
and as such dominated by the total energy
\[
\sum_{k \geq 1} \frac{ |{\bf G}_k (\varrho, {\bf m}) |^2 }{\varrho} \leq c \left(  \varrho  + \frac{| {\bf m} |^2}{\vr}  \right) \leq c \left( \frac{1}{2} \frac{|{\bf m}|^2}{\vr}  + P(\vr) \right) + 1.
\]
Hence, a simple application of the Lemma~\ref{lemma001} finishes the proof of the lemma. 
\end{proof}
%Finally collection of all the above results will provide complete details of the proof. 

%%%%%%%%%%%%%%%%%%%%%%%%%%%%%%%%%%%%%%%%%%%%%%

\section{Weak-Strong Uniqueness Principle}
\label{proof2}
In this section we prove Theorem~\ref{Weak-Strong Uniqueness} through couple of auxiliary results. We start with the following technical lemma, a variant of \cite[Lemma 3.1]{BrFeHo2015A}, to which we refer for the proof. Note that this lemma plays a pivotal role in the proof of the Proposition~\ref{relen}.

\begin{Lemma} \label{lem}
Let $q$ be a stochastic process on $\StoB$ such that for some $b\in\R$,
\[
q \in C_{\rm weak}([0,T]; W^{-b,2}(\tor)) \cap L^\infty (0,T; L^1(\tor)) \quad \text{$\mathbb{P}$-a.s.},
\]
%\begin{equation} \label{hh1}
% \E\bigg[ \sup_{q \in [0,T]} \| q \|^p_{L^1(\tor)}\bigg]  < \infty \ \mbox{for all}\ 1 \leq p < \infty,
%\end{equation}
\begin{equation*} 
\Dif q= \DD q\, \dt + \Dif M.
\end{equation*}
Here $M$ is a continuous square integrable $W^{-b,2}(\tor)$ valued martingale, and $\DD q$ is progressively measurable with
\begin{align} \label{hh2}
\begin{aligned}
\DD q\in &L^p(\Omega;L^1(0,T;W^{-b,a}(\tor)),
\end{aligned}
\end{align}
for some $a>1$ and some $m\in\N$.

Let $w$ be a stochastic process on $\StoB$ satisfying
\[
w \in C([0,T]; W^{b,a'} \cap C (\tor)) \quad \text{$\mathbb{P}$-a.s.},
\]
\begin{equation*} \label{hh3}
\E\bigg[ \sup_{t \in [0,T]} \| w \|_{W^{b,a'} \cap C (\tor)}^p \bigg] < \infty,\ 1 \leq p < \infty,
\end{equation*}
\begin{equation*} \label{rel2}
\Dif w = \DD w + \DS w \, \Dif W.
\end{equation*}
Here $\DD w, \DS w$ are progressively measurable with
\begin{align*} \label{hh4}
\begin{aligned}
\DD w\in L^p&(\Omega;L^1(0,T;W^{b,a'}\cap C(\tor)),\quad \DS w\in L^2(\Omega;L^2(0,T;L_2(\mathfrak U;W^{-m,2}(\tor)))),\\
&\sum_{k\geq1} \int_0^T\|\DS w(e_k)\|^2_{W^{b,a'}\cap C(\tor)}\dt\in L^p(\Omega),\quad 1\leq p<\infty.
\end{aligned}
\end{align*}
Let $Q$ be $[b+2]$-continuously differentiable function satisfying
\begin{equation*} \label{hh5}
\E\bigg[\sup_{t \in [0,T]} \| Q^{(j)} (w) \|_{W^{b,a'} \cap C (\tor)}^p \bigg]< \infty, \quad j = 0,1,2,\quad 1 \leq p < \infty.
\end{equation*}
Then
\begin{equation*} \label{result}
\begin{split}
\Dif \left( \intTor{ q Q(w) } \right)
&= \bigg( \intTor{ \Big[  q  \Big( Q'(w) \DD w  + \frac{1}{2}\sum_{k\geq1} Q''(w)
\left| \DS w (e_k)\right|^2  \Big) \Big] }  +  \left< Q(w) , \DD q \right> \bigg) {\rm d}t
\\
&+ \int_{\T^3}\Big<\hspace{-0.14cm}\Big<f(t),  \tilde M(t)  \Big>\hspace{-0.14cm}\Big>\, {\rm d}t
+ {\rm d}\tn{M},
\end{split}
\end{equation*}
where
\begin{equation*} \label{result1}
\tn{M} = \int_0^t\intTor{ \Big[  q \,Q'(w) \, {\rm d} M + Q(w)  \, {\rm d} M  \Big] }\,{\rm d}s.
\end{equation*}
\end{Lemma}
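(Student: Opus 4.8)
The statement is the infinite-dimensional It\^o product (integration-by-parts) formula for the duality pairing $\intTor{qQ(w)}=\langle q,Q(w)\rangle$, and I would prove it exactly along the lines of \cite[Lemma 3.1]{BrFeHo2015A}. The two structural ingredients are (i) It\^o's formula applied to the Nemytskij-type composition $Q(w)$, which produces the drift correction $\tfrac12\sum_{k\geq1}Q''(w)|\DS w(e_k)|^2$ and the martingale part $Q'(w)\DS w\,\Dif W$, and (ii) the It\^o product rule for the pairing of the $W^{-b,2}$-valued semimartingale $q$ with the $W^{b,a'}\cap C(\tor)$-valued semimartingale $Q(w)$, which brings in the quadratic cross-variation of their martingale parts. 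Because $q$ carries no spatial smoothness (it lies only in $W^{-b,2}\cap L^1$) while $Q(w)$ is tested against it, neither step can be applied to the product directly; the whole computation is first performed for the spatially mollified processes $q^\kappa=q\ast\varrho_\kappa$, $w^\kappa = w\ast\varrho_\kappa$ (and a Galerkin truncation in the noise), and one then passes to the limit.

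\textbf{Step 1 (It\^o's formula for $Q(w)$).} Since $Q$ is $[b+2]$-continuously differentiable and, by hypothesis, $\DD w\in L^p(\Omega;L^1(0,T;W^{b,a'}\cap C(\tor)))$, $\DS w\in L^2(\Omega;L^2(0,T;L_2(\mathfrak U;W^{-m,2}(\tor))))$ with $\sum_{k\geq1}\int_0^T\|\DS w(e_k)\|^2_{W^{b,a'}\cap C(\tor)}\,\dt\in L^p(\Omega)$, I would apply the chain rule termwise in a finite-dimensional basis to the mollified $w^\kappa$, obtaining
\[
\Dif Q(w^\kappa) = \Big(Q'(w^\kappa)\DD w^\kappa + \tfrac12\textstyle\sum_{k\geq1}Q''(w^\kappa)|\DS w^\kappa(e_k)|^2\Big)\Dif t + Q'(w^\kappa)\DS w^\kappa\,\Dif W,
\]
and then let $\kappa\to0$ using continuity of $Q,Q',Q''$, the moment bounds $\E[\sup_t\|Q^{(j)}(w)\|^p_{W^{b,a'}\cap C}]<\infty$ for $j=0,1,2$, and dominated convergence for the series in $k$. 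This identifies $\Dif Q(w)$ with drift in $W^{b,a'}\cap C(\tor)$ and martingale part $Q'(w)\DS w\,\Dif W$.

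\textbf{Step 2 (product rule and identification of the terms).} With $\Dif q=\DD q\,\Dif t+\Dif M$ and Step 1 in hand, the It\^o product formula for the $W^{-b,a}$--$W^{b,a'}$ pairing gives
\[
\Dif\langle q^\kappa,Q(w^\kappa)\rangle = \langle\DD q^\kappa,Q(w^\kappa)\rangle\,\Dif t + \langle q^\kappa,\Dif Q(w^\kappa)\rangle + \Dif\big[\!\big[\,M^\kappa,\ \textstyle\int_0^\cdot Q'(w^\kappa)\DS w^\kappa\,\Dif W\,\big]\!\big].
\]
Substituting Step 1 and collecting: the $\Dif t$-terms are $\langle Q(w),\DD q\rangle$ together with $\intTor{q\big(Q'(w)\DD w+\tfrac12\sum_k Q''(w)|\DS w(e_k)|^2\big)}$ and the spatial density of the quadratic cross-variation between the martingale part $\int_0^\cdot Q'(w)\DS w\,\Dif W$ of $Q(w)$ and the martingale $M$ (this cross-variation is identified exactly as in Lemma~\ref{rhoutight1311}, giving the term $\int_{\T^3}\langle\!\langle f,M\rangle\!\rangle\,\Dif t$ in the statement); the remaining martingale $\tn M$ collects the two contributions, one from testing $Q(w)$ against $\Dif M$ and one from pairing $q$ against $Q'(w)\DS w\,\Dif W$.

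\textbf{Step 3 and the main obstacle.} It remains to pass $\kappa\to0$ in each term: $\langle q^\kappa,Q(w^\kappa)\rangle\to\langle q,Q(w)\rangle$ from $q\in L^\infty(0,T;L^1(\tor))$, $Q(w)\in C([0,T];C(\tor))$; the drift terms converge by dominated convergence using \eqref{hh2} and the moment bounds on $\DD w$, $\DS w$, $Q^{(j)}(w)$; and the martingale and cross-variation terms converge in $L^2(\Omega)$ (or in probability) by the It\^o isometry / Burkholder--Davis--Gundy inequality together with square-integrability of $M$ and of $\sum_k\int_0^T\|\DS w(e_k)\|^2$. The delicate point is Step 1 combined with the regularity mismatch between $q$ and $Q(w)$: the It\^o correction $\tfrac12\sum_k Q''(w)|\DS w(e_k)|^2$ involves second derivatives of $Q$ and the full series in $k$, and one must control it uniformly in $\kappa$ and show the mollified correction converges to the claimed one --- this is precisely where the structural hypotheses on $Q$ and on $\DS w$ are used, and the argument is identical to that of \cite[Lemma 3.1]{BrFeHo2015A}.
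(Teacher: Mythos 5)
Your proposal is correct and follows essentially the same route as the paper, which in fact does not prove Lemma \ref{lem} itself but refers to \cite[Lemma 3.1]{BrFeHo2015A}: there, too, the formula is obtained by spatial regularization of $q$ and $w$, the finite-dimensional It\^o formula applied pointwise to $Q(w^\kappa)$ and to the pairing with $q^\kappa$, identification of the cross-variation of $M$ with the martingale part of $Q(w)$, and a passage to the limit using exactly the moment hypotheses on $\DD q$, $\DD w$, $\DS w$ and $Q^{(j)}(w)$. The only cosmetic difference is that here the martingale part of $q$ is an abstract square-integrable martingale $M$ rather than a stochastic integral with known kernel, so the cross-variation term must be left in the abstract form appearing in the statement (later supplied by item (k) of Definition \ref{def:dissMartin}), which you correctly note.
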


\subsection{Relative energy inequality}
\label{MEI}

\noindent We proceed further and introduce the \textit{relative energy (entropy)} functional. The commonly used form of the \textit{relative energy} functional in the context of measure-valued solutions to the compressible Euler system reads
\begin{equation}
\label{rell}
\begin{aligned} 
&\mathcal{E}^1_{\mathrm{mv}} \left( \varrho,\textbf{m} \ \Big| \ r, {\bf U} \right)(t)
:=
\int_{\T^3} \Bigg\langle {\mathcal{V}^{\omega}_{t,x}}; \frac{1}{2} \frac{ |\textbf{m}|^2}{\vr} + P(\varrho) \Bigg\rangle \,dx + \mathcal{D}(t)
- \intTor{ \big \langle {\mathcal{V}^{\omega}_{t,x}}; \textbf{m} \big \rangle \,\cdot {\bf U}}
\\
& \hspace{2cm}+ \frac{1}{2}  \intTor{ \big \langle {\mathcal{V}^{\omega}_{t,x}}; \vr \big \rangle \, |{\bf U}|^2 } - \intTor{ \big \langle {\mathcal{V}^{\omega}_{t,x}}; \vr \big \rangle \, P'(r) } - \intTor{ \big[ P'(r) r - P(r) \big] }.
\end{aligned}
\end{equation}
We remark that the above relative energy functional is defined for all $t \in [0,T] \setminus N$, where the set $N$, may depends on $\omega$, has Lebesgue measure zero. To define relative energy functional for all $t \in N$, we consider 
\begin{align} 
&\mathcal{E}^2_{\mathrm{mv}} \left( \varrho,\textbf{m} \ \Big| \ r, {\bf U} \right)(t)
:=
\lim_{\tau \rightarrow 0+} \frac{1}{\tau} \int_t^{t +\tau} \Bigg[\int_{\T^3} \Bigg\langle {\mathcal{V}^{\omega}_{s,x}}; \frac{1}{2} \frac{ |\textbf{m}|^2}{\vr} + P(\varrho) \Bigg\rangle\,dx + \mathcal{D}(s) \Bigg]\,ds  \label{rell1}\\
&- \intTor{ \big \langle {\mathcal{V}^{\omega}_{t,x}}; \textbf{m} \big \rangle \,\cdot {\bf U}}
+ \frac{1}{2}  \intTor{ \big \langle {\mathcal{V}^{\omega}_{t,x}}; \vr \big \rangle \, |{\bf U}|^2 } - \intTor{ \big \langle {\mathcal{V}^{\omega}_{t,x}}; \vr \big \rangle \, P'(r) } - \intTor{ \big[ P'(r) r - P(r) \big] }. \notag
\end{align}
Using above, we define relative energy functional as follows
\begin{equation}
\label{rell2}
\begin{aligned}
\mathcal{E}_{\mathrm{mv}} \left( \varrho,\textbf{m} \ \Big| \ r, {\bf U} \right)(t) :=
\begin{cases}
\mathcal{E}^1_{\mathrm{mv}} \left( \varrho,\textbf{m} \ \Big| \ r, {\bf U} \right)(t), \, &\text{if} \,\,t \in [0,T] \setminus N\\
\mathcal{E}^2_{\mathrm{mv}} \left( \varrho,\textbf{m} \ \Big| \ r, {\bf U} \right)(t), \, &\text{if} \,\,t \in N.
\end{cases}
\end{aligned}
\end{equation}
Note that the above relative energy functional is defined for all $t \in [0,T]$. In what follows, with the help of the above defined relative energy, we derive the relative energy inequality \eqref{relativeEntropy}. Note that, the relative energy inequality is a tool which enables us to compare measure valued solutions with some smooth comparison functions. 
\begin{Proposition}[Relative Energy] 
\label{relen}
Let $\big[ \big(\Omega,\mathfrak{F}, (\mathfrak{F}_{t})_{t\geq0},\mathbb{P} \big); {\mathcal{V}^{\omega}_{t,x}}, W \big]$ be a dissipative measure-valued martingale solution to the system \eqref{P1}--\eqref{P2}.
Suppose $\big(r \,,\,\mathbf{U}  \big)$ be a pair of stochastic processes which are adapted to the filtration $(\mathfrak{F}_t)_{t\geq0}$ and which satisfies
\begin{equation*}
\begin{aligned}
\label{operatorBB}
\mathrm{d}r  &= D^d_tr\,\mathrm{d}t  + \mathbb{D}^s_tr\,\mathrm{d}W,
\\
\mathrm{d}\mathbf{U}  &= D^d_t\mathbf{U}\,\mathrm{d}t  + \mathbb{D}^s_t\mathbf{U}\,\mathrm{d}W,
\end{aligned}
\end{equation*}
with
\begin{align*}\label{eq:smooth}
\begin{aligned}
r \in C([0,T]; W^{1,q}(\T^3)), \ \vc{U} \in C([0,T]; W^{1,q}(\T^3)), \ \quad\text{$\mathbb{P}$-a.s.},\\
\E\bigg[\sup_{t \in [0,T] } \| r \|_{W^{1,q}(\tor)}^2\bigg]^q  + \E\bigg[ \sup_{t \in [0,T] } \| \vc{U} \|_{W^{1,q}(\tor)}^2\bigg]^q \leq c(q) \quad\forall\,\, 2 \leq q < \infty,
\end{aligned}
\end{align*}
\begin{equation} \label{bound001}
0 < \underline{r} \leq r(t,x) \leq \overline{r} \quad\text{$\mathbb{P}$-a.s.}
\end{equation}
Moreover, $r$, $\vc{U}$ satisfy
\begin{align}
&D^d r, D^d \vc{U}\in L^q(\T^3;L^q(0,T;W^{1,q}(\mt))),\quad
\mathbb D^s r,\mathbb D^s \vc{U}\in L^2(\T^3;L^2(0,T;L_2(\mathfrak U;L^2(\tor)))),\nonumber\\
&\bigg(\sum_{k\geq 1}|\mathbb D^s r(e_k)|^q\bigg)^\frac{1}{q},\bigg(\sum_{k\geq 1}|\mathbb D^s \vc{U}(e_k)|^q\bigg)^\frac{1}{q}\in L^q(\T^3;L^q(0,T;L^{q}(\mt))).\label{new}
\end{align}
Then the following \emph{relative energy inequality} holds $\mathbb P$-a.s., for all $t\in (0,T)$:
\begin{equation}
\begin{aligned}
\label{relativeEntropy}
&\mathcal{E}_{\mathrm{mv}} \left(\varrho,\textbf{m} \ \Big| \ r, {\bf U} \right)
(t) \leq
\mathcal{E}_{\mathrm{mv}} \left(\varrho,\textbf{m} \ \Big| \ r, {\bf U} \right)(0) +M_{RE}(t)  + \int_0^t\mathcal{R}_{\mathrm{mv}} \big(\varrho,\textbf{m} \left\vert \right. r, \mathbf{U}  \big)(s)\,\mathrm{d}s
\end{aligned}
\end{equation}
where
\begin{equation}
\begin{aligned}
\label{remainderRE}
&\mathcal{R}_{\mathrm{mv}} \big(\varrho,{\bf m} \left\vert \right. r, \mathbf{U}  \big) 
=
\int_{\mathbb{T}^3}\langle {\mathcal{V}^{\omega}_{t,x}}; \varrho \mathbf{U} - {\bf m} \rangle \cdot[D^d_t  \mathbf{U} + \nabla_x  \mathbf{U} \cdot  \mathbf{U}] \,dx \\
&\quad + \int_{\mathbb{T}^3} \left\langle {\mathcal{V}^{\omega}_{t,x}}; \frac{({\bf m}-\varrho  \mathbf{U})\otimes (\varrho  \mathbf{U}-{\bf m}) }{\varrho} \right\rangle: \nabla_x \textbf{U} \,\mathrm{d}x -  \int_{\T^3} \nabla_x \mathbf{U} : d\mu_m  + \frac12\int_{\T^3} d \mu_e
\\
& \quad +
\int_{\mathbb{T}^3}\big[(r- \big \langle {\mathcal{V}^{\omega}_{t,x}}; \varrho \big\rangle)P''(r)D^d_tr + \nabla_x  P'(r) \cdot (r\mathbf{U}-\big \langle {\mathcal{V}^{\omega}_{t,x}}; {\bf m} \big\rangle)  \big] \,\mathrm{d}x
\\
&\quad +
\int_{\mathbb{T}^3}\big[p(r)  - \big \langle {\mathcal{V}^{\omega}_{t,x}}; p(\varrho)\big\rangle \big]\mathrm{div}_x (\mathbf{U}) \,\mathrm{d}x
+
\frac{1}{2}
\sum_{k\in\mathbb{N}}
\int_{\mathbb{T}^3} \Bigg \langle {\mathcal{V}^{\omega}_{t,x}}; \varrho\bigg\vert \frac{\mathbf{G}_k(\varrho,{\bf m})}{\varrho}  -\mathbb{D}^s_t\mathbf{U}(e_k)  \bigg\vert^2\Bigg\rangle \,\mathrm{d}x \\
& \quad + \frac{1}{2}\sum_{k\geq1} \intTor{ \big \langle {\mathcal{V}^{\omega}_{t,x}}; \vr \big \rangle P'''(r) |\DS r(e_k) |^2 } \, {\rm d}t + \frac{1}{2}\sum_{k\geq1} \intTor{ p''(r) |\DS r(e_k)|^2 } \, {\rm d}t.
\end{aligned}
\end{equation}
Here $M_{RE}$ is a real valued square integrable martingale, and the norm of this martingale depends only on the norms of $r$ and $\mathbf{U}$ in the aforementioned spaces. Moreover, the pressure potential $P$ is defined as the solution of the equation $r P'(r) -P(r)=p(r)$.

\end{Proposition}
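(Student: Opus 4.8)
The plan is to mimic the derivation of the relative energy inequality of Breit--Feireisl--Hofmanov\'a \cite{BrFeHo2015A}, carried out there for weak martingale solutions of the Navier--Stokes system, adapting it to the measure-valued framework and to the Euler system. Concretely, I would add to the energy inequality \eqref{third condition measure-valued solution} (in the form \eqref{energy_001}, extended to hold for all $s<t$ as noted in the Remark after \eqref{energy_001}) the time increments of the four remaining terms constituting $\mathcal{E}_{\mathrm{mv}}(\varrho,\textbf{m}\mid r,\mathbf{U})$, namely $-\int_{\T^3}\langle{\mathcal{V}^{\omega}_{t,x}};\textbf{m}\rangle\cdot\mathbf{U}\,dx$, $\tfrac12\int_{\T^3}\langle{\mathcal{V}^{\omega}_{t,x}};\varrho\rangle|\mathbf{U}|^2\,dx$, $-\int_{\T^3}\langle{\mathcal{V}^{\omega}_{t,x}};\varrho\rangle P'(r)\,dx$ and $-\int_{\T^3}[P'(r)r-P(r)]\,dx$, each computed by the stochastic product/It\^o rule of Lemma~\ref{lem}. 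For the first term one takes $q=\langle{\mathcal{V}^{\omega}_{t,x}};\textbf{m}\rangle$, whose differential — including its martingale part $M^1_E$ and the defect term $d\mu_m$ — is read off from \eqref{second condition measure-valued solution}, together with $w=\mathbf{U}$ and $Q=\mathrm{id}$; for the second, $q=\langle{\mathcal{V}^{\omega}_{t,x}};\varrho\rangle$ (a pure drift process by \eqref{first condition measure-valued solution}), $w=\mathbf{U}$, $Q(w)=\tfrac12|w|^2$; for the third, $q=\langle{\mathcal{V}^{\omega}_{t,x}};\varrho\rangle$, $w=r$, $Q=P'$; the fourth is a direct It\^o expansion of $\int_{\T^3}p(r)\,dx$ using $rP'(r)-P(r)=p(r)$, whence $p'(r)=rP''(r)$ and $p''(r)=P''(r)+rP'''(r)$.

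The decisive point is the noise--noise interaction. Expanding the It\^o product $d\big(\int_{\T^3}\langle{\mathcal{V}^{\omega}_{t,x}};\textbf{m}\rangle\cdot\mathbf{U}\,dx\big)$ produces a cross-variation term which, by item~(k) of Definition~\ref{def:dissMartin}, equals $\sum_{k\geq1}\int_{\T^3}\langle{\mathcal{V}^{\omega}_{t,x}};\mathbf{G}_k(\varrho,\textbf{m})\rangle\cdot\mathbb{D}^s_t\mathbf{U}(e_k)\,dx\,dt$; expanding $d\big(\tfrac12\int_{\T^3}\langle{\mathcal{V}^{\omega}_{t,x}};\varrho\rangle|\mathbf{U}|^2\,dx\big)$ produces the It\^o correction $\tfrac12\sum_{k\geq1}\int_{\T^3}\langle{\mathcal{V}^{\omega}_{t,x}};\varrho\rangle|\mathbb{D}^s_t\mathbf{U}(e_k)|^2\,dx\,dt$; and the energy inequality already carries $\tfrac12\sum_{k\geq1}\int_{\T^3}\langle{\mathcal{V}^{\omega}_{t,x}};\varrho^{-1}|\mathbf{G}_k(\varrho,\textbf{m})|^2\rangle\,dx\,dt$. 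Since $\mathbb{D}^s_t\mathbf{U}(e_k)$ does not depend on the Young-measure variable, these three combine algebraically into the perfect square $\tfrac12\sum_{k\geq1}\int_{\T^3}\langle{\mathcal{V}^{\omega}_{t,x}};\varrho|\mathbf{G}_k(\varrho,\textbf{m})/\varrho-\mathbb{D}^s_t\mathbf{U}(e_k)|^2\rangle\,dx$ appearing in $\mathcal{R}_{\mathrm{mv}}$. The remaining drift terms rearrange exactly as in the deterministic computation: the convective part of \eqref{second condition measure-valued solution} together with the $\nabla_x\mathbf{U}$-contributions coming from testing \eqref{first condition measure-valued solution} with $\tfrac12|\mathbf{U}|^2$ assembles into $\int_{\T^3}\langle{\mathcal{V}^{\omega}_{t,x}};(\textbf{m}-\varrho\mathbf{U})\otimes(\varrho\mathbf{U}-\textbf{m})/\varrho\rangle:\nabla_x\mathbf{U}$ plus $\int_{\T^3}\langle{\mathcal{V}^{\omega}_{t,x}};\varrho\mathbf{U}-\textbf{m}\rangle\cdot[D^d_t\mathbf{U}+\nabla_x\mathbf{U}\cdot\mathbf{U}]$; the pressure contributions, through $\nabla_x$ of the test function $P'(r)$ in \eqref{first condition measure-valued solution} and the term $\langle{\mathcal{V}^{\omega}_{t,x}};p(\varrho)\rangle\mathrm{div}_x\mathbf{U}$ in \eqref{second condition measure-valued solution}, produce the pressure remainder and the $P''(r)D^d_tr$, $P'''(r)|\mathbb{D}^s_tr(e_k)|^2$, $p''(r)|\mathbb{D}^s_tr(e_k)|^2$ terms; and the defect measures enter linearly as $-\int_{\T^3}\nabla_x\mathbf{U}:d\mu_m$ and $\tfrac12\int_{\T^3}d\mu_e$. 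All It\^o martingale increments collect into a single real-valued square-integrable martingale $M_{RE}$ whose bracket is controlled by the stated norms of $r,\mathbf{U}$ via \eqref{new}, \eqref{bound001} and the moment bounds on $\langle{\mathcal{V}^{\omega}_{t,x}};\varrho\rangle$, $\langle{\mathcal{V}^{\omega}_{t,x}};\textbf{m}\rangle$.

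The main obstacle I anticipate is the rigorous justification of Lemma~\ref{lem} in this low-regularity, measure-valued setting. The process $q=\langle{\mathcal{V}^{\omega}_{t,x}};\textbf{m}\rangle$ is only a $W^{-m,2}$-valued square-integrable martingale plus a drift of the indicated form, with no finer structure than the cross variation prescribed by item~(k), so the classical It\^o formula for Hilbert-space semimartingales is not directly available; one must verify that $q$, $w=\mathbf{U}$, $w=r$ and the comparison functions $Q=\mathrm{id}$, $Q(w)=\tfrac12|w|^2$, $Q=P'$ all meet the hypotheses of Lemma~\ref{lem} — in particular \eqref{hh2} for $\langle{\mathcal{V}^{\omega}_{t,x}};\textbf{m}\rangle$ and the integrability of the convective and pressure terms tested against $\mathbf{U}$, which follows from the $L^p_{t,x}$ bounds on $\langle{\mathcal{V}^{\omega}_{t,x}};\textbf{m}\otimes\textbf{m}/\varrho\rangle$ and $\langle{\mathcal{V}^{\omega}_{t,x}};p(\varrho)\rangle$ together with the $W^{1,q}$ regularity of $\mathbf{U}$ — and that all stochastic integrals and cross variations are well defined. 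A secondary subtlety, already flagged after \eqref{energy_001}, is that $\mathcal{E}_{\mathrm{mv}}$ is defined through the two cases \eqref{rell2}: the inequality produced by the computation holds a priori only for a.e.\ $t$, and it is upgraded to all $t\in(0,T)$ by appealing to the Lebesgue-point definition $\mathcal{E}^2_{\mathrm{mv}}$ in \eqref{rell1}, exactly as in Section~\ref{proof1}, since the right-hand side of \eqref{relativeEntropy} is right-continuous in $t$.
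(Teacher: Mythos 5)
Your proposal is correct and follows essentially the same route as the paper: the paper likewise applies Lemma~\ref{lem} with exactly your choices ($q=\langle\mathcal{V}^{\omega}_{t,x};\textbf{m}\rangle$, $w=\mathbf{U}$; $q=\langle\mathcal{V}^{\omega}_{t,x};\varrho\rangle$ with $Q(w)=\tfrac12|w|^2$ and $Q=P'$; and a direct expansion of $\int_{\T^3}[P'(r)r-P(r)]\,dx$), identifies the cross variation through item~(k) of Definition~\ref{def:dissMartin}, and adds the resulting identities to the energy inequality \eqref{third condition measure-valued solution} so that the three noise contributions form the perfect square and the martingales collect into $M_{RE}=M_1+M_2+M_3+M_4+M^2_E$. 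Your additional remarks on verifying the hypotheses of Lemma~\ref{lem} and on upgrading from a.e.\ $t$ to all $t$ via the definition \eqref{rell2} are consistent with how the paper sets things up.
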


\begin{proof}
Note that all the integrals on the right hand side of \eqref{rell} can be explicitly expressed by means of either the energy inequality \eqref{third condition measure-valued solution} or the field equations \eqref{first condition measure-valued solution} and \eqref{second condition measure-valued solution}. Therefore, to compute the right hand side of \eqref{rell}, we make use of Lemma \ref{lem} and the energy inequality (\ref{third condition measure-valued solution}).

\medskip

\noindent {\bf Step 1:}

\medskip

To compute $\Dif \intTor{ \big \langle {\mathcal{V}^{\omega}_{t,x}}; \textbf{m} \big \rangle \,\cdot {\bf U} }$ we recall that $q = \big \langle {\mathcal{V}^{\omega}_{t,x}}; \textbf{m} \big \rangle $ satisfies hypotheses (\ref{hh2})
with some $a < \infty$.  Applying Lemma \ref{lem} we obtain
\begin{equation} \label{I1}
\begin{split}
& \Dif \left( \intTor{ \big \langle {\mathcal{V}^{\omega}_{t,x}}; \textbf{m} \big \rangle \cdot {\bf U} } \right) = \intTor{ \left[ \big \langle {\mathcal{V}^{\omega}_{t,x}}; \textbf{m} \big \rangle \cdot \DD {\bf U}
+ \left\langle {\mathcal{V}^{\omega}_{t,x}}; \frac{\textbf{m}\otimes \textbf{m}}{\varrho} \right\rangle: \nabla_x \textbf{U} + \big \langle {\mathcal{V}^{\omega}_{t,x}}; p(\vr) \big\rangle\, \Div {\bf U} \right] }  {\rm d}t \\
& \qquad +  \sum_{k\geq1}\intTor{ \DS {\bf U}(e_k) \cdot \big \langle {\mathcal{V}^{\omega}_{t,x}}; \vc{G}_k (\varrho,\textbf{m}) \big\rangle}\, {\rm d}t + \int_{\T^3}  \nabla_x {\bf U}: d\mu_m \,dt+ \Dif M_1,
\end{split}
\end{equation}
where 
\[
M_1(t) = \int_0^t \int_{\T^3} \vc{U} \,dx \, dM^1_{E} + \int_0^t \intTor{ \big \langle {\mathcal{V}^{\omega}_{t,x}}; \textbf{m} \big \rangle \cdot \DS \vc{U} } \,\Dif W
\]
is a square integrable martingale. Note that to identify the cross variation in (\ref{I1}), we have used item (k) of the Definition~\ref{def:dissMartin}, and noticing that
\begin{align*}
\Big<\hspace{-0.14cm}\Big<\big \langle {\mathcal{V}^{\omega}_{t,x}}; \textbf{m} \big \rangle,  \vc{U}(t)  \Big>\hspace{-0.14cm}\Big>
&=\sum_{i,j} \Bigg(\sum_{k\geq1}  \int_0^t \left \langle \big \langle {\mathcal{V}^{\omega}_{t,x}}; \vc{G}_k (\varrho,\textbf{m}) \big \rangle, g_i \right \rangle \, \left \langle \mathbb{D}_t^s \vc{U}(e_k), g_j \right\rangle \,ds \Bigg) \, g_i \otimes g_j \\
& =\sum_{k\geq1} \int_0^t \big \langle {\mathcal{V}^{\omega}_{t,x}}; \vc{G}_k (\varrho,\textbf{m}) \big\rangle \cdot \DS {\bf U}(e_k) \, {\rm d}s.
\end{align*}

\color{black}

\medskip

\noindent
{\bf Step 2:}

\medskip

Similarly, we compute
\begin{equation} \label{I2}
\begin{split}
\Dif \left( \intTor{ \frac{1}{2} \big \langle {\mathcal{V}^{\omega}_{t,x}}; \vr \big \rangle |\vc{U}|^2 } \right) &=
\intTor{ \big \langle {\mathcal{V}^{\omega}_{t,x}}; \textbf{m} \big \rangle \cdot \Grad {\bf U} \cdot {\bf U} }  {\rm d}t 
+  \intTor{ \big \langle {\mathcal{V}^{\omega}_{t,x}}; \vr \big \rangle {\bf U} \cdot  \DD {\bf U} } {\rm d}t \\
& \qquad + \frac{1}{2} \sum_{k\geq1}\intTor{ \big \langle {\mathcal{V}^{\omega}_{t,x}}; \vr \big \rangle |\DS {\bf U}(e_k)|^2 } \ {\rm d}t + {\rm d}M_2,
\end{split}
\end{equation}
where
\[
M_2(t) = \int_0^t \intTor{ \big \langle {\mathcal{V}^{\omega}_{t,x}}; \vr \big \rangle \vc{U} \cdot \DS \vc{U} } \, \Dif W,
\]
\begin{equation} \label{I3}
{\rm d} \left( \intTor{ \left[ P'(r) r - P(r) \right] } \right)
= \intTor{ p'(r) \DD r } \ {\rm d}t + \frac{1}{2}\sum_{k\geq1} \intTor{ p''(r) |\DS r(e_k)|^2 } \, {\rm d}t + \Dif M_3,
\end{equation}
where
\[
M_3(t) = \int_0^t \intTor{ p'(r) \DS r } \,\Dif W,
\]
and, finally,
\begin{equation} \label{I4}
\begin{split}
{\rm d} \left( \intTor{ \big \langle {\mathcal{V}^{\omega}_{t,x}}; \vr \big \rangle P'(r) } \right) &=
 \intTor{  \Grad P'(r) \cdot \big \langle {\mathcal{V}^{\omega}_{t,x}}; \textbf{m} \big \rangle } \ {\rm d}t 
+ \intTor{ \big \langle {\mathcal{V}^{\omega}_{t,x}}; \vr \big \rangle P''(r) \DD r } \ {\rm d}t  \\
& \qquad + \frac{1}{2}\sum_{k\geq1} \intTor{ \big \langle {\mathcal{V}^{\omega}_{t,x}}; \vr \big \rangle P'''(r) |\DS r(e_k) |^2 } \, {\rm d}t
+ \Dif M_4,
\end{split}
\end{equation}
where
\[
M_4(t) = \int_0^t \intTor{ \big \langle {\mathcal{V}^{\omega}_{t,x}}; \vr \big \rangle P''(r) \DS r } \,\Dif W.
\]

\medskip

\noindent
{\bf Step 3:}

Now we can collect (\ref{I1}--\ref{I4}), define the square integrable real valued martingale $M_{RE}(t):= M_1(t) + M_2(t) +M_3(t) +M_4(t) + M^2_E(t)$, and summing up the resulting expressions and adding the sum with (\ref{third condition measure-valued solution}) to obtain (\ref{relativeEntropy}).
\end{proof}

%\begin{Remark}\label{rem_001}
%We remark that the same techniques can be applied mutatis mutandis to conclude that the relative energy inequality 
%\begin{equation}
%\begin{aligned}
%\label{relativeEntropy_001}
%&\mathcal{E}_{\mathrm{mv}} \left(\varrho,\textbf{m} \ \Big| \ r, {\bf U} \right)
%(t^-) + \mathcal{D}(t^-) \leq
%\mathcal{E}_{\mathrm{mv}} \left(\varrho,\textbf{m} \ \Big| \ r, {\bf U} \right)(0) +M_{RE}(t)  + \int_0^t\mathcal{R}_{\mathrm{mv}} \big(\varrho,\textbf{m} \left\vert \right. r, \mathbf{U}  \big)(s)\,\mathrm{d}s,
%\end{aligned}
%\end{equation}
%holds for all $t \in (0,T)$.
%\end{Remark}

%%%%%%%%%%%%%%%%%%%%%%%%%%%%%%%%%%%%%%%%%%%%%%%%
\subsection{Proof of Theorem~\ref{Weak-Strong Uniqueness}}

As we have seen before, the \emph{relative energy inequality} \eqref{relativeEntropy} is a consequence of the \emph{energy inequality} \eqref{third condition measure-valued solution}. Now we wish to use the Proposition~\ref{relen} to prove the following weak (measure-valued)--strong uniqueness principle given by Theorem~\ref{Weak-Strong Uniqueness}.

In what follows, our aim is to apply the relative energy inequality \eqref{third condition measure-valued solution} to the pair
$(r,\mathbf U)=(\bar{\varrho}(\cdot\wedge \mathfrak t_R),\bar{\bfu}(\cdot\wedge \mathfrak t_R))$, where $(\bar{\varrho},\bar{\mathbf{u}},(\mathfrak{t}_R)_{R\in\mathbb{N}},\mathfrak{t})$ is the unique maximal strong pathwise solution to \eqref{P1}--\eqref{P2} which exists by Theorem \ref{thm:main}. Recall that the stopping time $\mathfrak t_R$ announces the blow-up and satisfies
\begin{equation*}
\sup_{t\in[0,\mathfrak{t}_R]}\|\bar{\vu}(t)\|_{1,\infty}\geq R\quad \text{on}\quad [\mathfrak{t}<T] ;
\end{equation*}
Moreover, $(r,\mathbf U)=(\bar{\varrho},\bar{\bfu})$ satisfies an equation of the form \eqref{operatorBB}, where
\begin{align*}
D^d_tr  = -\mathrm{div}_x (\bar{\varrho}\bar{\mathbf{u}}),\quad \mathbb{D}^s_tr=0,
\quad
D^d_t\mathbf{U}  = -\bar{\mathbf{u}}\cdot\nabla_x \bar{\mathbf{u}} -\frac{1}{\bar{\varrho}}\nabla_x  p(\bar{\varrho}),
\quad
\mathbb{D}^s_t\mathbf{U}  = \frac{1}{\bar{\varrho}}\mathbb{G} (\bar{\varrho},\bar{{\bf m}}).
\end{align*}
It is easy to see that
\eqref{bound001} and \eqref{new} are satisfied for $t\leq \mathfrak t_R$, thanks to Theorem \ref{thm:main} and \eqref{FG2}. Moreover, the lower bound for $\bar{\varrho}$ is a consequence of standard maximum principle.
So, \eqref{relativeEntropy} holds and we can now deduce that for every $t \in [0,T]$ and $R\in\mathbb{N}$,
\begin{equation}
\begin{aligned}
\label{relativeEntropy1}
&\mathcal{E}_{\mathrm{mv}} \left( \varrho,\textbf{m} \ \Big| \ \bar{\varrho}, \bar{\mathbf{u}} \right)  
(t \wedge \mathfrak{t}_R)  \\
& \qquad \qquad \leq
\mathcal{E}_{\mathrm{mv}} \left( \varrho,\textbf{m} \ \Big| \ \bar{\varrho}, \bar{\mathbf{u}} \right)(0) + M_{RE}(t \wedge \mathfrak{t}_R)  + \int_0^{t \wedge \mathfrak{t}_R} \mathcal{R}_{\mathrm{mv}} \big(\varrho,{\bf m} \left\vert \right. \bar{\varrho}, \bar{\mathbf{u}}  \big) (s)\,\mathrm{d}s,
\end{aligned}
\end{equation}
%for any $t\in(0,T)$ and each stopping time $\mathfrak{t}_R$ 
where after a standard manipulation of terms in \eqref{remainderRE}, as in \cite{Fei01}, we obtain
\begin{equation}
\begin{aligned}
\label{remainderRE1}
\mathcal{R}_{\mathrm{mv}} \big(\varrho,{\bf m} & \left\vert \right. \bar{\varrho}, \bar{\mathbf{u}}  \big) 
=\,  \int_{\mathbb{T}^3} \left\langle {\mathcal{V}^{\omega}_{t,x}}; \left|\frac{(\textbf{m}-\varrho \bar{\mathbf{u}})\otimes (\varrho \bar{\mathbf{u}}-\textbf{m}) }{\varrho}\right| \right\rangle |\nabla_x \bar{\mathbf{u}}| \,dx  \\
&-
\int_{\mathbb{T}^3}\langle {\mathcal{V}^{\omega}_{t,x}}; |p(\varrho)-p'(\bar{\varrho})(\varrho-\bar{\varrho})-p(\bar{\varrho})|\rangle |\divv_x \bar{\mathbf{u}}| \,dx\\
&+
\frac{1}{2}
\sum_{k\in\mathbb{N}}
\int_{\mathbb{T}^3} \Bigg \langle {\mathcal{V}^{\omega}_{t,x}}; \varrho\bigg\vert \frac{\mathbf{G}_k(\varrho,\varrho \mathbf{u})}{\varrho}  -\frac{\mathbf{G}_k(\bar{\varrho}, \bar{\varrho} \bar{\mathbf{u}})}{\bar{\varrho}}  \bigg\vert^2 \Bigg \rangle \,\mathrm{d}x 
+  \int_{\T^3} |\nabla_x \bar{\mathbf{u}}|\cdot d|\mu_m| +  \frac12 \int_{\T^3} d|\mu_c|.
\end{aligned}
\end{equation}
Since $\bar{\textbf{u}}$ has compact support we can control the terms $|\nabla_x \bar{\mathbf{u}}|$ by some constants. It is also clear that there exist a constant $c_1$ such that
\begin{equation*}
\left| \frac{(\textbf{m}- \varrho\bar{\mathbf{u}}) \otimes (\varrho \bar{\mathbf{u}}-\textbf{m})}{\varrho} \right| \leq \frac{c_1}{2\varrho} |\textbf{m}-\varrho \bar{\mathbf{u}}|^2,
\end{equation*}
and a constant $c_2$ such that
\begin{equation*}
|p(\varrho) -p'(\bar{\varrho})(\varrho - \bar{\varrho})- p(\bar{\varrho})| \leq c_2 (P(\varrho) -P'(\bar{\varrho})(\varrho - \bar{\varrho})- P(\bar{\varrho})).
\end{equation*}
To deal with the term coming from It\"{o} correction terms, we follow \cite{BrFeHo2015A} and rewrite
\begin{equation}
\begin{aligned}
\label{nio0}
&\varrho\bigg\vert \frac{\mathbf{G}_k(\varrho,\varrho \mathbf{u})}{\varrho}  -\frac{\mathbf{G}_k(\bar{\varrho}, \bar{\varrho}\bar{\mathbf{u}})}{\bar{\varrho}}  \bigg\vert^2
= \chi_{\{\varrho \leq \bar{\varrho}/2\}}\, \varrho\bigg\vert \frac{\mathbf{G}_k(\varrho,\varrho \mathbf{u})}{\varrho}  -\frac{\mathbf{G}_k(\bar{\varrho}, \bar{\varrho} \bar{\mathbf{u}})}{\bar{\varrho}}  \bigg\vert^2
\\
& \qquad + \chi_{\{ \bar{\varrho}/2 < \varrho < 2\bar{\varrho}\}}\, \varrho\bigg\vert \frac{\mathbf{G}_k(\varrho,\varrho \mathbf{u})}{\varrho}  -\frac{\mathbf{G}_k(\bar{\varrho}, \bar{\varrho} \bar{\mathbf{u}})}{\bar{\varrho}}  \bigg\vert^2 
+ \chi_{\{  \varrho \geq 2\bar{\varrho}\}}\, \varrho\bigg\vert \frac{\mathbf{G}_k(\varrho,\varrho \mathbf{u})}{\varrho}  -\frac{\mathbf{G}_k(\bar{\varrho}, \bar{\varrho} \bar{\mathbf{u}})}{\bar{\varrho}}  \bigg\vert^2
\\
&=: I_1+I_2+I_3
\end{aligned}
\end{equation}
We can now use the inequality $\varrho\leq 1+ \varrho^\gamma$ and elementary inequalities to conclude that
\begin{equation*}
\begin{aligned}
\label{nio1}
I_1 &\leq 
c\, \chi_{\{\varrho \leq \bar{\varrho}/2\}}\,\bigg( \frac{1}{\varrho}\,\vert \mathbf{G}_k(\varrho ,\varrho {\mathbf{u}}) \vert^2  +\frac{\varrho}{\bar{\varrho}^2}\,\vert \mathbf{G}_k(\bar{\varrho},\bar{\varrho}\bar{\mathbf{u}}) \vert^2  \bigg) 
\\
&\leq c\, \chi_{\{\varrho \leq  \bar{\varrho}/2\}}\big( \varrho + \varrho\vert {\mathbf{u}}\vert^2  + \varrho\vert  \bar{\mathbf{u}}\vert^2\big) 
\leq c(R)\,\chi_{\{\varrho \leq  \bar{\varrho}/2\}}\big(1+ \varrho^\gamma +\varrho \vert {\mathbf{u}} -  \bar{\mathbf{u}}\vert^2\big) 
\end{aligned}
\end{equation*}
Therefore,
\begin{equation*}
\begin{aligned}
\sum_{k\in\mathbb{N}} \int_{\mathbb{T}^3} \big \langle {\mathcal{V}^{\omega}_{t,x}}; I_1\big \rangle \,\mathrm{d}x 
\leq c(R)\,
\mathcal{E}^1_{\mathrm{mv}} \big(\varrho ,{\mathbf{m}}\left\vert \right. \bar{\varrho}, \bar{\mathbf{u}}  \big), \,\, \text{for a.e.}\,\, t \in [0,T].
\end{aligned}
\end{equation*}
We can estimate $I_2$ and $I_3$ in a similar fashion.
Finally, we can conclude from \eqref{nio0} that
\begin{equation}
\begin{aligned}
\label{r3est}
\frac{1}{2}
\sum_{k\in\mathbb{N}}
\int_{\mathbb{T}^3} \Bigg \langle {\mathcal{V}^{\omega}_{t,x}}; \varrho\bigg\vert \frac{\mathbf{G}_k(\varrho,\varrho \mathbf{u})}{\varrho}  -\frac{\mathbf{G}_k(\bar{\varrho}, \bar{\varrho} \bar{\mathbf{u}})}{\bar{\varrho}}  \bigg\vert^2 \Bigg \rangle \,\mathrm{d}x 
\leq c(R)\,
\mathcal{E}^1_{\mathrm{mv}} \big(\varrho ,{\mathbf{m}}\left\vert \right. \bar{\varrho}, \bar{\mathbf{u}}  \big), \,\, \text{for a.e.}\,\, t \in [0,T].
\end{aligned}
\end{equation}
Collecting all the above estimates, we have shown that
\begin{equation}
\begin{aligned}
\label{r4est}
\int_0^{t \wedge \mathfrak{t}_R}
\mathcal{R}_{\mathrm{mv}} \big(\varrho,{\bf m} \left\vert \right. \bar{\varrho}, \bar{\mathbf{u}}  \big)  \,\mathrm{d}s
\leq c(R)\,\int_0^{t \wedge \mathfrak{t}_R} \mathcal{E}_{\mathrm{mv}} \big(\varrho,{\bf m} \left\vert \right. \bar{\varrho}, \bar{\mathbf{u}}  \big) 
(s) \,\mathrm{d}s .
\end{aligned}
\end{equation}
Combining \eqref{r4est} and \eqref{relativeEntropy1} and applying Gronwall's lemma yields
\begin{align}
\nonumber
\mathbb{E}\,  \Big[\mathcal{E}_{\mathrm{mv}} \big(\varrho ,{\mathbf{m}}\left\vert \right. \bar{\varrho}, \bar{\mathbf{u}}  \big)  
(t \wedge \mathfrak{t}_R) \Big] 
\leq c(R)\,
\mathbb{E}\,\Big[\mathcal{E}_{\mathrm{mv}} \big(\varrho ,{\mathbf{m}}\left\vert \right. \bar{\varrho}, \bar{\mathbf{u}}  \big)(0)\Big].
\label{relativeEntropy2}
\end{align}
Note that we have
\begin{equation*}
\begin{aligned}
\mathcal{E}_{\mathrm{mv}} \big(\varrho ,{\mathbf{m}}\left\vert \right. \bar{\varrho}, \bar{\mathbf{u}}  \big)  (0) 
&=
\int_{\mathbb{T}^3} \Big \langle {\mathcal{V}^{\omega}_{0,x}}; \frac{1}{2}\varrho_{0}\big\vert \mathbf{u}_{0} - \bar{\mathbf{u}}_{0} \big\vert^2  + P\big(\varrho_{0} ,\bar{\varrho}_0 \big) \Big \rangle \,\mathrm{d}x
\end{aligned}
\end{equation*}
which is zero in expectation by assumptions.
Therefore, we conclude that 
\begin{align*}
\mathbb{E}\,  \Big[\mathcal{E}_{\mathrm{mv}} \big(\varrho ,{\mathbf{m}}\left\vert \right. \bar{\varrho}, \bar{\mathbf{u}}  \big)  
(t \wedge \mathfrak{t}_R) \Big] =0, \quad \text{for all}\,\, t \in [0,T].
\end{align*}
This, in particular, implies that 
\begin{align*}
\lim_{\tau \rightarrow 0+} \frac{1}{\tau} \int_t^{t +\tau} \mathbb{E}\,  \Big[\mathcal{E}_{\mathrm{mv}} \big(\varrho ,{\mathbf{m}}\left\vert \right. \bar{\varrho}, \bar{\mathbf{u}}  \big)  
(s \wedge \mathfrak{t}_R) \Big]\,ds =0
\end{align*}
Thanks to \emph{a priori} estimates \eqref{apv}--\eqref{est:rhobfu22} (which are preserved in the limit), an application of Fubini's theroem, and a Lebesgue point argument reveals that for a.e. $t \in [0,T]$
\begin{align*}
\mathbb{E}\,  \Big[\mathcal{E}^1_{\mathrm{mv}} \big(\varrho ,{\mathbf{m}}\left\vert \right. \bar{\varrho}, \bar{\mathbf{u}}  \big)  
(t \wedge \mathfrak{t}_R) \Big] =0.
\end{align*}
In other words, for a.e. $(t,x) \in [0,T]\times \T^3$
\begin{equation*}
\mathcal{D}(t \wedge \mathfrak{t})=0, \quad \mathcal{V}^{\omega}_{t \wedge \mathfrak{t},x}	= \delta_{\bar{\varrho}(t \wedge \mathfrak{t},x), (\bar{\varrho}\bar{\bf u})(t \wedge \mathfrak{t},x)}, \,\p-\mbox{a.s.}
\end{equation*}
%\begin{align*}
%\p \Big\{ \omega: {\mathcal{V}^{\omega}_{t,x}} = \delta_{\bar{\varrho}(t,x), (\bar{\varrho}\bar{\bf u})(t ,x)}, \,\, \text{for a.e.}\,\,  t <\mathfrak{t}_R(\omega) \Big\} =1.
%\end{align*}

%%%%%%%%%%%%%%%%%%%%%%%%%%%%%%%%%%%%%%%%%%%%%%%%%%
\section{Singular Limits}
\label{singular}
In this section, we discuss another application of relative energy - a rigorous justification of \emph {low Mach number limit} (also called \emph{incompressible limit}) for the system (\ref{P1})--(\ref{P2}). 
For this purpose, let us first rescale the deterministic counterpart of the stochastic compressible Euler system (\ref{P1})--(\ref{P2}) by non-dimensionalization. After combining terms appropriately (setting the so--called Strouhal number equal to one), and adding a stochastic force term, one reaches the following system 
\begin{align} \label{compEulerEp1}
\D \vre + \Div (\vre \vue) \,dt &=0,\\ \label{compEulerEp2}
\D (\vre \vue) + \big[\Div (\vre \vue \otimes \vue) + \frac{1}{\ep^2} \Grad p(\vre)\big] \dt&= \mathbb{G} (\vre, \vre \vue)\, \D W,
\end{align}
where $\ep$ is called the Mach number. It represents the norm of the velocity divided by the sound speed. It is well known that acoustic waves are responsible for the weak convergence of gradient part of the velocity. Hence, the limit of stochastic forcing term $\mathbb{G} (\vr, \vr \vu)\, \D W$ can be performed only if $\mathbb{G}$ is linear in the second variable. Therefore, for the remainder of the paper, we only consider $\mathbb{G}$ of the following form:
\begin{align} \label{assump}
\tn{G}(\vr, \vr \vu) = \vr \tn{K} + \vr \vu \tn{L},\quad \sum_{k \geq 1} \big( |K_k| + |L_k| \big) < \infty,
\end{align}
where $K_k, L_k$ are real numbers, and $\tn{K} = (K_k)_{k\in \N}$, $\tn{L} = (L_k)_{k\in \N}$ are suitable Hilbert-Schmidt operators.

We consider the {asymptotic limit} of solutions $(\vre, \vue)$ for $\ep \rightarrow 0$. Accordingly, as $\ep \rightarrow 0$, the speed of the acoustic wave becomes infinite and the fluid density approaches to a constant and the velocity becomes solenoidal. The resulting limiting equations are
\begin{align}
\label{incompEuler1}
\Div \vc{v} &= 0 \\ \label{incompEuler2}
\D \vc{v} + \big[\vc{v} \cdot \Grad \vc{v} + \Grad \Pi \big] \,dt &=  \mathbb{G} (1, \vc{v})\, \D W
\end{align}
There are couple of approaches available in literature to deal with the singular limit problem. The first approach deals with the classical (strong) solution of \eqref{P1}--\eqref{P2}, while the second approach is based on the concept of weak (dissipative measure-valued)
solutions for the system (\ref{P1}--\ref{P2}). For the deterministic counterpart of \eqref{P1}--\eqref{P2}, Kleinermann and Majda \cite{KM1}, Schochet \cite{SCHO2}, Masmoudi \cite{Masmoudi} and many others have successfully implemented the first approach. On the other hand, Feireisl et.al. \cite{Fei02}, and Bruell and Feireisl \cite{Bru} have explored the second approcah for deterministic compressible fluid equations. The main advantage of the second approach is that measure-valued solutions exist globally in time, while classical solutions may not exist globally in time.

Our aim is to extend the result of Feireisl et. al. \cite{Fei02} to the stochastically driven compressible fluids. To fix the ideas, let $\big[ \big(\Omega,\mathfrak{F}, (\mathfrak{F}_{{\varepsilon},t})_{t\geq0},\mathbb{P} \big); {\mathcal{V}^{\omega,\ep}_{t,x}}, W \big]$ be dissipative measure-valued martingale solutions to the system \eqref{compEulerEp1}--\eqref{compEulerEp2}, in the sense of Definition~\ref{def:dissMartin}. A rigorous justification of passing to the limit as $\ep \rightarrow 0$ in \eqref{compEulerEp1}--\eqref{compEulerEp2} makes use of the relative energy inequality \eqref{relativeEntropy}. However, since the filtration corresponding to measure-valued martingale solution depends explicitly on $\ep$, we need to justify that we can apply relative energy inequality to the pair ${\mathcal{V}^{\omega,\ep}_{t,x}}$ and $v$, for all $\ep>0$. Indeed, this can be done thanks to the fact that the unique strong solution $v$ can be constructed on any given stochastic basis and is adapted to the Brownian motion which is indepependent of $\ep$. In other words, for every filtration $(\mathfrak{F}_{{\varepsilon},t})_{t\geq0}$ the strong solution to the incompressible Euler equation is adapted to that filtration, since it is adapted to the brownian motion and the brownian motion is adapted to $(\mathfrak{F}_{{\varepsilon},t})_{t\geq0}$.

%%%%%%%%%%%%%%%%%%%%%%%%%%%%%%%%%%%%%%%%%%%%%

\subsection{Solutions of the incompressible Euler system}

Let us assume that we are given the stochastic basis $\StoB$ and the Wiener process $W$ identified in the beginning of this manuscript.

\begin{Definition}\label{def:strsolE}

Let $\StoB$ be a stochastic basis with a complete right-continuous filtration, let $W$ be an $( \mathfrak{F}_t )_{t \geq 0} $-cylindrical Wiener process. A
stochastic process $\vc{v}$ with a stopping time $\mathfrak{t}$ is called a (local) strong solution
to the Euler system (\ref{incompEuler1}), (\ref{incompEuler2}) provided
\begin{itemize}
\item the velocity $\vc{v} \in C([0,T]; W^{3,2}(\tor; \mathbb{R}^3))$, $\mathbb{P}$-a.s. is $( \mathfrak{F}_t )_{t \geq 0}$-adapted,
\[
\E\bigg[ \sup_{t \in [0,T]} \| \vc{v} (t, \cdot) \|_{W^{3,2} (\tor; \mathbb{R}^3 )}^p \bigg] < \infty, \,\, \mbox{for all}\,\, 1 \leq p < \infty;
\]
\item There holds $\p$-a.s.
\begin{equation} \label{Form}
\begin{split}
\Div \vc{v} &= 0, \\
\vc{v} (t \wedge \mathfrak{t})  &= \vc{v} (0) - \int_0^{t \wedge \mathfrak{t}} {\vc{P}_H} \left[ \vc{v} \cdot \Grad \vc{v} \right] \dt  +
\int_0^{t \wedge \mathfrak{t}} \vc{P}_H \left[ {\tn{G}}(1,\vc{v} ) \right] \, \Dif W,
\end{split}
\end{equation}
 a.e. in $(0,T)\times\tor$.
Here $\vc{P}_H$ denotes the standard Helmholtz projection onto the space of solenoidal functions.
\end{itemize}
\end{Definition}

Regarding the local-in-time existence of strong solutions to the stochastic Euler system, under certain restrictions imposed on the forcing coefficients $\tn{G}$, we refer to the recent work by Glatt-Holtz and Vicol \cite[Theorem 4.3]{GHVic}. 
Here, as mentioned before, we assume a very simple form of $\tn{G}$, namely 
as in \eqref{assump}.
%
%that it is an affine function of the momentum
%\begin{equation} \label{Diff}
%\tn{G}(1, \vc{v})  = \tn{K} + \vc{v} \tn{L},\ \mbox{where} \ \tn{K} = \{ K_k \}_{k \geq 1},\ \tn{L} = \{ L_k \}_{k \geq 1}.
%\end{equation}
One of the advantage of such a choice is that the pressure $\Pi$ can be computed explicitly from \eqref{Form}.
Indeed noting that
\[
\vc{P}_H \left[ {\tn{G}}(1,\vc{v} ) \right] =  {\tn{G}}(1,\vc{v} ),
\]
we get
\begin{equation} \label{presE}
\Grad \Pi = - \vc{P}^\perp_H [\vc{v} \cdot \Grad \vc{v}] = - \Grad \Delta^{-1} \Div (\vc{v} \otimes \vc{v}).
\end{equation}
Accordingly, the second equation in (\ref{Form}) reads
\begin{equation} \label{momEu}
\vc{v} (t \wedge \mathfrak{t})  = \vc{v} (0) - \int_0^{t \wedge \mathfrak{t}}  \left[ \vc{v} \cdot \Grad \vc{v} \right] \dt -
 \int_0^{t \wedge \mathfrak{t}} \Grad \Pi \ \dt +
\int_0^{t \wedge \mathfrak{t}}  {\tn{G}}(1,\vc{v} )  \,\Dif W.
\end{equation}

%%%%%%%%%%%%%%%%%%%%%%%%%%%%%%%%%%%%%%%%%%%%%%%

%
%\subsection{Well-prepared initial data}
%\label{PM}
%
%Let $\vr_{0,\ep} = \vr_\ep(0, \cdot)$, $\vu_{0, \ep} = \vu_\ep(0, \cdot)$ be the initial data for the rescaled system (\ref{compEulerEp1}), (\ref{compEulerEp2}).
%We suppose that
%\[
%\frac{\vr_{0, \ep} - 1 }{\ep} \to s_0,\
%\vc{u}_{0,\ep} \to {\vc{u}_0}
%\]
%in a certain sense specified in the forthcoming section.
%We say that the initial data are
%\begin{itemize}
%\item {\emph{well-prepared}}
%if $s_0 = 0$, $\vc{u}_0=\vc{v}_0$, $\Div \vc{v}_0 = 0$;
%\item {\emph{ill-prepared}}
%otherwise.
%\end{itemize}
%
%In the context of (DMV) solutions, where the the distribution of the initial data is determined by the measure $Y_{0,x}^\ep$, \emph{well-prepared initial
%data} translates to
%\begin{equation}\label{wp1}
%\intO{
%\left< Y_{0,x}^\ep; \frac{1}{2} \vr  \left| \frac{\vc{m}}{\vr} - \vc{v}_0(x) \right|^2 + \frac{1}{\ep^2} \Big( P(\vr) - P'(1)(\vr - 1) - P(1) \Big) \right>} \to 0 \ \mbox{as}\ \ep \to 0,
%\end{equation}
%for certain constant $1 > 0$ and a solenoidal function $\vc{v}_0$.
%
%If the initial data are given in terms of the functions $\vr_{0,\ep}$, $\vu_{0,\ep}$, meaning
%\[
%Y^{\ep}_{0,\ep} = \delta_{\vr_{0,\ep}(x), [\vr_{0,\ep}(x) \vu_{0,\ep}(x)]},
%\]
%(\ref{wp1}) follows as soon as
%\[
%\begin{split}
%\frac{\vr_{0, \ep} - 1 }{\ep} \ \mbox{bounded in}\ L^\infty(\Omega),\ \frac{\vr_{0, \ep} - 1 }{\ep} \to 0 \ \mbox{in}\ L^1(\Omega),\
%\vc{u}_{0,\ep} \to \vc{v}_0 \ \mbox{in} \ L^2(\Omega; R^N), \ \Div \vc{v}_0 = 0.
%\end{split}
%\]
%

\subsection{Main result}
We now state the main result of this section related to the rescaled stochastic compressible Euler system \eqref{compEulerEp1}--\eqref{compEulerEp2}.

\begin{Theorem} \label{thm:sing}
Let $\tn{G}$ be given as in \eqref{assump}, and the initial data $\vr_{0,\ep}$, $(\vr \vu)_{0,\ep}$, and $\vc{v}_0$ be given such that $\p$-a.s.
\[
\Bigg \lbrace \vr_{0, \ep}, (\vr \vu)_{0,\ep}  \in L^\gamma(\tor) \times L^{\frac{2 \gamma}{\gamma + 1}}(\tor;
\mathbb{R}^3) \ \Big| \ \vr_{0,\ep} \geq \underline \vr > 0,\ \frac{|\vr_{0,\ep} - 1|}{\ep} \leq \delta(\ep),\
|(\vr \vu)_{0,\ep} - \vc{v}_0 |
\leq \delta (\ep) \Bigg \rbrace,
\]
where
\[
\delta (\ep) \to 0 \ \mbox{as}\ \ep \to 0,
\]
and where
$\vc{v}_0$ is an $\mathfrak{F}_0$-measurable random variable,
\begin{align*}
&\vc{v}_0 \in W^{3,2}(\tor;R^3), \ \Div \vc{v}_0 = 0, \quad \text{$\mathbb{P}$-a.s.},\\
&\expe{ \| \vc{v}_0 \|_{W^{3,2}(\tor; R^3)}^p } < \infty, \,\, \mbox{for all}\,\, 1 \leq p < \infty.
\end{align*}
Let $\big[ \big(\Omega,\mathfrak{F}, (\mathfrak{F}_{\ep,t})_{t\geq0},\mathbb{P} \big); {\mathcal{V}^{\omega,\ep}_{t,x}}, W \big]$ be a dissipative measure-valued martingale solution to the system \eqref{compEulerEp1}--\eqref{compEulerEp2}, satisfying the compatibility condition (\ref{fourth condition measure-valued solutions}), and suppose $(\vc{v}, \mathfrak{t})$ defined on the same probability space $\big(\Omega,\mathfrak{F}, (\mathfrak{F}_{\ep,t})_{t\geq0},\mathbb{P} \big)$ is a unique local strong solution of the Euler system 
\eqref{incompEuler1}--\eqref{incompEuler2} driven by the same cylindrical Wiener process $W$ in the sense of Definition~\ref{def:strsolE}. 
Then as $\ep \to 0$, $\p$-a.s.
\[
\mathcal{D}^\ep \to 0 \ \mbox{in}\ L^\infty(0,T),
\]
\[
\esssup_{t \in (0,T)} \E \Bigg[\intON{
\left< {\mathcal{V}^{\omega,\ep}_{t,x}}; \frac{1}{2} \vr  \left| \frac{\vc{m}}{\vr} - \vc{v}(t,x) \right|^2 + \frac{1}{\ep^2} \Big( P(\vr) - P'(1)(\vr - 1) - P(1) \Big) \right>}\,(t \wedge \tau) \Bigg] \to 0
\]
\end{Theorem}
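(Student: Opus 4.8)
The plan is to apply the relative energy inequality \eqref{relativeEntropy} with the comparison pair $(r,\mathbf{U})=(1,\mathbf{v})$, where $\mathbf{v}$ is the unique local strong solution of the incompressible Euler system on the stochastic basis carrying the $\varepsilon$-dependent filtration (as explained in the text, $\mathbf{v}$ is adapted to each $(\mathfrak{F}_{\varepsilon,t})$ since it is adapted to $W$). One must first record that $(r,\mathbf{U})=(1,\mathbf{v})$ satisfies an equation of the form \eqref{operatorBB} with $D^d_t r=0$, $\mathbb{D}^s_t r=0$, $D^d_t\mathbf{U}=-\mathbf{v}\cdot\nabla_x\mathbf{v}-\nabla_x\Pi$, $\mathbb{D}^s_t\mathbf{U}=\mathbb{G}(1,\mathbf{v})$, using \eqref{momEu}; the regularity hypotheses \eqref{bound001}, \eqref{new} hold because $\mathbf{v}\in C([0,T];W^{3,2})$, $\underline r=\overline r=1$, and because of the special linear structure \eqref{assump} of $\mathbb{G}$. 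Since the scaled pressure in \eqref{compEulerEp2} is $\varepsilon^{-2}p(\vr)$, the relative energy functional is the one appearing in the statement, with the factor $\varepsilon^{-2}$ in front of $P(\vr)-P'(1)(\vr-1)-P(1)$; call it $\mathcal{E}^\varepsilon_{\mathrm{mv}}(\vr,\mathbf{m}\,|\,1,\mathbf{v})(t)$.

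Next I would evaluate the remainder term $\mathcal{R}_{\mathrm{mv}}$ from \eqref{remainderRE} on this pair and show it is controlled by $c(R)\,\mathcal{E}^\varepsilon_{\mathrm{mv}}$ up to a term that vanishes as $\varepsilon\to0$. The key algebraic simplifications: using $\Div\mathbf{v}=0$ kills the terms $\langle p(r)-\langle p(\vr)\rangle\rangle\,\divv_x\mathbf{U}$ and $[p(r)\,\text{or}\,P]$-type contributions with $\divv_x\mathbf{U}$; the momentum equation \eqref{momEu} for $\mathbf{v}$ is used to rewrite $D^d_t\mathbf{U}+\nabla_x\mathbf{U}\cdot\mathbf{U}=-\nabla_x\Pi$, and since $\langle\vr\mathbf{U}-\mathbf{m}\rangle$ is tested against a gradient, this term is of the form $\int\nabla\Pi\cdot(\text{something of size }\sqrt{\mathcal{E}^\varepsilon})$ — one bounds it by $c(R)\mathcal{E}^\varepsilon$ plus, if needed, a small correction. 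The convective term $\langle \frac{(\mathbf{m}-\vr\mathbf{U})\otimes(\vr\mathbf{U}-\mathbf{m})}{\vr}\rangle:\nabla_x\mathbf{U}$ is dominated by $c_1\|\nabla\mathbf{v}\|_\infty\,\langle\frac{|\mathbf{m}-\vr\mathbf{U}|^2}{\vr}\rangle\le c(R)\mathcal{E}^\varepsilon$ as in the weak–strong proof, and the measure terms $-\int\nabla\mathbf{U}:d\mu_m+\frac12\int d\mu_e$ are bounded by $c(R)\int d|\mu_m|+\int d|\mu_e|\le c(R)\mathcal{D}^\varepsilon$ via \eqref{fourth condition measure-valued solutions}. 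For the Itô-correction term one repeats the trichotomy \eqref{nio0}–\eqref{r3est}, but now with $\bar\vr=1$ and the explicit linear form \eqref{assump}: here $\frac{\mathbf{G}_k(\vr,\mathbf{m})}{\vr}-\mathbb{D}^s_t\mathbf{U}(e_k)=\frac{\mathbf m}{\vr}L_k - \mathbf{v}L_k = (\frac{\mathbf m}{\vr}-\mathbf v)L_k$ (the $K_k$ parts cancel exactly!), so $\sum_k\langle\vr|\cdots|^2\rangle = (\sum_k L_k^2)\langle\vr|\frac{\mathbf m}{\vr}-\mathbf v|^2\rangle\le c\,\mathcal{E}^\varepsilon$ directly — this is cleaner than in the general weak–strong case. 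One must also keep the $P$ terms $\frac{1}{2}\sum_k\langle\vr\rangle P'''(r)|\mathbb{D}^s_t r(e_k)|^2+\frac12\sum_k p''(r)|\mathbb{D}^s_t r(e_k)|^2$, which vanish since $\mathbb{D}^s_t r=0$.

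Having established
\[
\int_0^{t\wedge\mathfrak{t}_R}\mathcal{R}_{\mathrm{mv}}(\vr,\mathbf m\,|\,1,\mathbf v)(s)\,ds\le c(R)\int_0^{t\wedge\mathfrak{t}_R}\mathcal{E}^\varepsilon_{\mathrm{mv}}(\vr,\mathbf m\,|\,1,\mathbf v)(s)\,ds + r_\varepsilon(t),
\]
where $r_\varepsilon\to0$ in expectation, I take expectations in \eqref{relativeEntropy} (the martingale $M_{RE}$ drops out), apply Gronwall on $[0,t\wedge\mathfrak{t}_R]$, and use that the initial relative energy tends to $0$ in expectation by the hypotheses $|\vr_{0,\varepsilon}-1|/\varepsilon\le\delta(\varepsilon)$, $|(\vr\mathbf u)_{0,\varepsilon}-\mathbf v_0|\le\delta(\varepsilon)$ together with a Taylor expansion of $P$ around $1$ (using $\underline\vr\le\vr_{0,\varepsilon}$ so $P''$ is bounded on the relevant range) and uniform integrability of the initial energies. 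This yields $\mathbb{E}[\mathcal{E}^\varepsilon_{\mathrm{mv}}(\vr,\mathbf m\,|\,1,\mathbf v)(t\wedge\mathfrak{t}_R)]\to0$ as $\varepsilon\to0$ for every $t$, hence (Lebesgue point / Fubini as in Section \ref{proof2}) $\mathbb{E}[\mathcal{E}^{1,\varepsilon}_{\mathrm{mv}}(t\wedge\mathfrak t_R)]\to0$ for a.e. $t$, giving both $\mathcal{D}^\varepsilon(t\wedge\mathfrak t_R)\to0$ and the stated convergence of the velocity/density relative energy; replacing $\mathfrak t_R$ by the announced stopping time $\tau$ and letting $R\to\infty$ (or working directly up to $\mathfrak t$ as in the statement) closes the argument. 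I expect the main obstacle to be the uniform-in-$\varepsilon$ control of the acoustic/pressure contribution $\int_{\T^3}\langle\vr\mathbf U-\mathbf m\rangle\cdot\nabla_x\Pi\,dx$ together with the careful bookkeeping of which remainder pieces are genuinely $O(\varepsilon)$ versus which are absorbed into $c(R)\mathcal{E}^\varepsilon$; once the exact cancellation of the $K_k$-noise terms (thanks to \eqref{assump}) is exploited, the noise terms themselves are comparatively harmless, unlike in the fully nonlinear weak–strong setting.
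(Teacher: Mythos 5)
Most of your plan coincides with the paper's argument: you apply the relative energy inequality with $(r,\mathbf U)=(1,\mathbf v)$ stopped before blow-up, you control the convective term by $c(R)\,\mathcal{E}^\ep_{\mathrm{mv}}$, the measure contributions by $\mathcal D^\ep$ via \eqref{fourth condition measure-valued solutions}, the It\^o correction by the exact cancellation of the $K_k$-parts coming from \eqref{assump} (this is precisely \eqref{I7a22}), and the well-prepared data give a vanishing initial relative energy; expectation, Gronwall and a Lebesgue-point argument then close the proof. The gap is exactly at the point you yourself flag as the ``expected obstacle'': the acoustic term $\int_0^{\tau}\int_{\T^3}\langle \mathcal V^{\omega,\ep}_{t,x};\varrho\mathbf v-\mathbf m\rangle\cdot\nabla_x\Pi\,dx\,dt$ cannot be absorbed into $c(R)\,\mathcal E^\ep_{\mathrm{mv}}$ ``plus a small correction''. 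Cauchy--Schwarz only gives a bound of order $\sqrt{\mathcal E^\ep_{\mathrm{mv}}}$, and after Young's inequality the constant produced is independent of $\ep$; Gronwall then yields $\E[\mathcal E^\ep_{\mathrm{mv}}(t)]\lesssim \E[\mathcal E^\ep_{\mathrm{mv}}(0)]+C$, which does not tend to zero. Nor is $\langle\mathbf m\rangle\cdot\nabla\Pi$ controlled by the relative energy at all: the relative energy measures $\mathbf m-\varrho\mathbf v$, while the gradient (acoustic) part of $\langle\mathbf m\rangle$ is merely bounded and oscillates; no pointwise-in-time smallness is available.

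The paper's resolution, which is the genuinely new ingredient compared with the weak--strong argument of Section \ref{proof2}, is structural rather than Gronwall-based: split $\int\langle\varrho\mathbf v-\mathbf m\rangle\cdot\nabla\Pi$ into the two pieces \eqref{I9a}--\eqref{I9b}. For $\int\langle\mathbf m\rangle\cdot\nabla\Pi$ one uses the measure-valued continuity equation \eqref{first condition measure-valued solution} (with $\Pi$ as test function, $\Pi$ having zero mean) to integrate by parts in time, and for $\int\langle\varrho\mathbf v\rangle\cdot\nabla\Pi$ one uses $\Div\mathbf v=0$ to replace $\varrho$ by $\varrho-1$; in both cases an explicit factor $\ep$ appears in front of $\big\langle\frac{\varrho-1}{\ep}\big\rangle$. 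A \emph{second} application of the relative energy inequality, this time with $(r,\mathbf U)=(1,0)$, provides the uniform-in-$\ep$ bound $\E\big[\int_{\T^3}\langle\mathcal V^{\omega,\ep}_{t,x};\frac{|\varrho-1|^{\gamma_*}}{\ep^2}\rangle\,dx\big]\le C$ with $\gamma_*=\min\{\gamma,2\}$, so these contributions are $O(\ep)$ and vanish as $\ep\to0$, while the remaining concentration part is again dominated by $\mathcal D^\ep$ through \eqref{fourth condition measure-valued solutions}. Without this continuity-equation/divergence-free manipulation and the auxiliary uniform bound on $\frac{\varrho-1}{\ep}$, the Gronwall step in your proposal does not deliver the conclusion, so as written the proof is incomplete at its decisive step.
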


We remark that the above theorem asserts that the probability measures ${\mathcal{V}^{\omega}_{t,x}}$ shrink to their expected value as $\ep \to 0$, where the latter are characterized
by the constant value $1$ for the density and the solution $\vc{v}$ of the incompressible system. The situation considered in the above theorem corresponds to the so called well-prepared data. However, one can extend these results to the case of ill-prepared data (refer to Masmoudi \cite{Masmoudi}, for the related deterministic results) with additional technical computations.

\subsection{Proof of Theorem~\ref{thm:sing}}
\label{I}
Let us first assume that $\vc{v}$, with a stopping time $\mathfrak{t}$, is a local strong solution of the stochastic Euler system \eqref{Form}. For each $M>0$, let us define
$$
\tau_M := \inf \bigg \lbrace t \in [0,T]: \| \nabla_x v(t)\|_{L^{\infty}(\T^3)} >M \bigg \rbrace,
$$
be another stopping time. Thanks to the existence theorem \cite[Theorem 4.3]{GHVic}, we assume, without loss of generality, that $\tau_M \le \mathfrak{t}$. Following \eqref{rell}, 
for ${\mathcal{V}^{\omega,\ep}_{t,x}}$ - the dissipative measure-valued solution of the rescaled system - we denote for a.e. $t \in [0,T]$
\[
\mathcal{E}_{\mathrm{mv}}^{\ep,1} \left( \vr, \vc{m} \Big| 1, \vc{v} \right)(t)
= \intON{
\left< {\mathcal{V}^{\omega,\ep}_{t,x}}; \frac{1}{2} \vr  \left| \frac{\vc{m}}{\vr} - \vc{v}(t,x) \right|^2 + \frac{1}{\ep^2} \Big( P(\vr) - P'(1)(\vr - 1) - P(1) \Big) \right>} + \mathcal{D}^{\varepsilon}(t),
\]
the relative energy functional associated to $1$, $\vc{v}$. Similarly, following \eqref{rell1}
and \eqref{rell2}, we define $\mathcal{E}_{\mathrm{mv}}^{\ep,2} \left( \vr, \vc{m} \Big| 1, \vc{v} \right)$, and $\mathcal{E}_{\mathrm{mv}}^{\ep} \left( \vr, \vc{m} \Big| 1, \vc{v} \right)$ appropriately.

\subsubsection{Relative energy inequality}
As the quantities $r = 1$, $\vc{U}(\tau) = \vc{v}(\tau \wedge \tau_M)$ enjoy the required regularity, they can be used as test functions in the relative entropy inequality. Therefore, we conclude
\begin{equation} 
\label{I7}
\begin{aligned}
\mathcal{E}_{\mathrm{mv}}^\ep &\left(\vr, \vc{m} \ \Big| \ 1, \vc{v} \right)(\tau \wedge \tau_M) \leq
\intON{
\left< {\mathcal{V}^{\omega,\ep}_{0,x}}; \frac{1}{2} \vr  \left| \frac{\vc{m}}{\vr} - \vc{v}_0(x) \right|^2 + \frac{1}{\ep^2} \Big( P(\vr) - P'(1)(\vr - 1) - P(1) \Big) \right>} \\
&+
\int_0^{\tau \wedge \tau_M} \intON{ \left[ \left< {\mathcal{V}^{\omega,\ep}_{t,x}}; \vr \vc{v}(t,x) - \vc{m} \right> \cdot D^d_t \vc{v} +
\left< {\mathcal{V}^{\omega,\ep}_{t,x}}; (\vr \vc{v}(t,x) - \vc{m} ) \otimes \frac{\vc{m}}{\vr} \right> : \Grad \vc{v}  \right] } \dt \\
&- \int_0^{\tau \wedge \tau_M} \int_{\mathbb{T}^3} \Grad \vc{v}:{\rm d}\mu^{\ep}_m  +M_{RE}(\tau \wedge \tau_M) - M_{RE}(0)  \\
&+ \frac{1}{2}\sum_{k\geq1}\int_0^{\tau \wedge \tau_M} \intTor{ \Bigg \langle {\mathcal{V}^{\omega,\ep}_{t,x}}; \varrho \Big|\frac{1}{{\varrho}}  {\vc{G}_k}(\varrho,\varrho \vu) - \vc{G}_k(1, \vc{v}) \Big|^2 \Bigg \rangle}\ \dt + \frac12  \int_0^{\tau \wedge \tau_M}\int_{\T^3} d \mu^{\ep}_e.
\end{aligned}
\end{equation}
As the initial data are well-prepared, we get
\begin{equation} \label{I7a1}
\E \bigg[\intON{
\left< {\mathcal{V}^{\omega,\ep}_{0,x}}; \frac{1}{2} \vr  \left| \frac{\vc{m}}{\vr} - \vc{v}_0(x) \right|^2 + \frac{1}{\ep^2} \Big( P(\vr) - P'(1)(\vr - 1) - P(1) \Big) \right>} \bigg] \le \delta(\ep),\, \delta(\ep) \to 0 \ \mbox{as}\ \ep \to 0.
\end{equation}
In addition, since the compatibility condition (\ref{fourth condition measure-valued solutions}) is satisfied for all $\ep$, we deduce
\begin{equation} \label{I7a2}
\frac12 \int_0^{\tau \wedge \tau_M} \int_{\T^3} d \mu^{\ep}_e - \int_0^{\tau \wedge \tau_M} \int_{\mathbb{T}^3} \Grad \vc{v}:{\rm d}\mu^{\ep}_m 
\leq C M \int_0^{\tau \wedge \tau_M} \mathcal{D}^\ep \ \dt
\end{equation}
Next, motivated by the specific form of $\tn{G}(1, \vc{v})$ introduced in (\ref{assump}), and thanks to the boundedness of the sum $\sum_{k\geq1}|L_k|^2$,
\begin{equation}
\label{I7a22}
\begin{aligned}
\sum_{k\geq1}& \intTor{ \Bigg \langle {\mathcal{V}^{\omega,\ep}_{t,x}};  \varrho \Big|\frac{1}{{\varrho}}  {\vc{G}_k}(\varrho,\varrho \vu) - \vc{G}_k(1, \vc{v}) \Big|^2 \Bigg \rangle}  \\
& \qquad \qquad =\sum_{k\geq1}
\intTor{ \Bigg \langle {\mathcal{V}^{\omega,\ep}_{t,x}};  \varrho \left| (\vu - \vc{v}) H_k \right|^2 \Bigg \rangle} \leq \,C\, \mathcal{E}^{\ep}_{\mathrm{mv}} \left( \vr, \vm \ \Big| \ 1 , \vc{v} \right).
\end{aligned}
\end{equation}
We proceed further, and claim that
\begin{equation} \label{I7a3}
\begin{split}
\int_0^{\tau \wedge \tau_M} & \intON{ \left[ \left< {\mathcal{V}^{\omega,\ep}_{t,x}}; \vr \vc{v}(t,x) - \vc{m} \right> \cdot D^d_t \vc{v} +
\left< {\mathcal{V}^{\omega,\ep}_{t,x}}; (\vr \vc{v}(t,x) - \vc{m} ) \otimes \frac{\vc{m}}{\vr} \right> : \Grad \vc{v}  \right] } \dt
\\ & \qquad \leq C M \int_0^{\tau \wedge \tau_M}  \mathcal{E}^{\ep}_{\mathrm{mv}} \left(\vr, \vc{m} \ \Big| \ 1, \vc{v} \right)\,\dt.
\end{split}
\end{equation}
In order to justify (\ref{I7a3}), we first start by writing
\[
\begin{split}
&\intON{ \left< {\mathcal{V}^{\omega,\ep}_{t,x}}; (\vr \vc{v}(t,x) - \vc{m} ) \otimes \frac{\vc{m}}{\vr} \right> : \Grad \vc{v} } \\&=
\intON{ \left< {\mathcal{V}^{\omega,\ep}_{t,x}}; (\vr \vc{v}(t,x) - \vc{m} ) \otimes \frac{\vc{m} - \vr \vc{v} }{\vr} \right> : \Grad \vc{v} }
+ \intON{ \left< {\mathcal{V}^{\omega,\ep}_{t,x}}; \vr \vc{v}(t,x) - \vc{m} \right> \cdot \vc{v} \cdot \Grad \vc{v} },
\end{split}
\]
where obviously,
\[
\intON{ \left< {\mathcal{V}^{\omega,\ep}_{t,x}}; (\vr \vc{v}(t,x) - \vc{m} ) \otimes \frac{\vc{m} - \vr \vc{v} }{\vr} \right> : \Grad \vc{v} }
\leq C M\, \mathcal{E}^{\ep}_{\mathrm{mv}} \left(\vr, \vc{m} \ \Big| \ 1, \vc{v} \right).
\]
Moreover, as $\vc{v}$ fulfills equation (\ref{incompEuler2}), we observe that justifying (\ref{I7a3}) reduces to showing
\begin{equation} \label{I7a4}
\int_0^{\tau \wedge \tau_M}  \intON{ \left< {\mathcal{V}^{\omega,\ep}_{t,x}}; \vc{m} - \vr \vc{v}(t,x) \right> \cdot \Grad \Pi } \dt
\leq C \int_0^{\tau \wedge \tau_M}  \mathcal{E}^{\ep}_{\mathrm{mv}} \left(\vr, \vc{m} \ \Big| \ 1, \vc{v} \right)\,\dt.
\end{equation}
To see (\ref{I7a4}), we deduce from the density equation that
\begin{equation} \label{I9a}
\begin{split}
\int_0^{\tau \wedge \tau_M} &\intON{ \left< {\mathcal{V}^{\omega,\ep}_{t,x}}; \vc{m} \right> \cdot \Grad \Pi }\dt \\ & \qquad = \left[ \intON{ \left< {\mathcal{V}^{\omega,\ep}_{t,x}}; \vr \right> \Pi } \right]_{t = 0}^{t = \tau  \wedge \tau_M} { - \int_0^{\tau \wedge \tau_M} \int_{\mathcal{T}^N} \Grad \Pi \cdot {\rm d} \mu^{\ep}_c } \\& \qquad \qquad =
\ep \left[ \intON{ \left< {\mathcal{V}^{\omega,\ep}_{t,x}}; \frac{\vr - 1}{\ep} \right> \Pi } \right]_{t = 0}^{t = \tau  \wedge \tau_M} - \int_0^{\tau \wedge \tau_M} \int_{\mathcal{T}^N} \Grad \Pi \cdot {\rm d} \mu^{\ep}_c .
\end{split}
\end{equation}
Similarly, we may use the incompressibility condition $\Div \vc{v} = 0$ to obtain
\begin{equation} \label{I9b}
\int_0^{\tau \wedge \tau_M} \intON{ \left< {\mathcal{V}^{\omega,\ep}_{t,x}}; \vr \vc{v}(t,x) \right> \cdot \Grad \Pi } \dt
= \ep \int_0^{\tau \wedge \tau_M} \intON{ \left< {\mathcal{V}^{\omega,\ep}_{t,x}}; \frac{\vr - 1}{\ep}  \right> \vc{v} \cdot \Grad \Pi } \dt
\end{equation}
Note that $\Pi$ and $\Grad \Pi$ are bounded continuous in
$[0,T] \times \mathbb{T}^3$. Now observe that the right-most integral in (\ref{I9a}) can be controlled by the dissipation defect $\mathcal{D}^\ep$. In order to control the other term on the right side of (\ref{I9a}) and the term on the right side of \eqref{I9b}, we evoke again relative energy inequality, this time for $r = 1$, $\vc{U} = 0$ obtaining
\[
\begin{split}
&\expe{ \intTor{ \Bigg \langle {\mathcal{V}^{\omega,\ep}_{t,x}}; \left[ \frac{1}{2} \vr |\vu|^2 + \frac{1}{\ep^2} \left( P(\vr) - P'(1)(\vr - 1) - P(1) \right) \right] \Bigg \rangle}(\tau \wedge \tau_M) + \mathcal{D}^{\varepsilon}(\tau \wedge \tau_M) } \\
&\qquad \leq
\expe{ \intTor{ \Bigg \langle {\mathcal{V}^{\omega,\ep}_{0,x}}; \left[ \frac{1}{2} \vr |\vu|^2 +  \frac{1}{\ep^2} \left( P(\vr) - P'(1)(\vr - 1) - P(1) \right) \right] \Bigg \rangle}(0) }.
\end{split}
\]
Thus, if the right-hand side of the above inequality is bounded uniformly for $\ep \to 0$, we deduce the following uniform bounds and set $\gamma_*=\min\lbrace \gamma, 2\rbrace$:
\begin{align*}
\E \Big[\int_{\T^3} \left< {\mathcal{V}^{\omega,\ep}_{t,x}}; \frac12 \vr |\vu|^2 \right> (\tau \wedge \tau_M)\,dx\ \Big] \le C, \quad
\E \Big[\int_{\T^3} \left< {\mathcal{V}^{\omega,\ep}_{t,x}}; \frac{|\vr -1|^{\gamma_*}}{\ep^2}\right> (\tau \wedge \tau_M)\,dx\ \Big] \le C
\end{align*}
Now using (\ref{presE}), the continuity of $\nabla \Delta^{-1} \mathrm{div}$, and regularity of $\vc{v}$, we obtain 
\begin{align*}
&\left| \expe{ \int_0^{\tau \wedge \tau_M} \intTor{ \left< {\mathcal{V}^{\omega,\ep}_{t,x}}; \frac{ \varrho - 1}{\ep} \right>\,\Grad \Pi \cdot \vc{v} } \ \dt } \right| \\
& \qquad \le \E\Bigg[\Big\| \left< {\mathcal{V}^{\omega,\ep}_{t,x}}; \frac{ \varrho - 1}{\ep} \right>\Big\|_{L^{\gamma_*}(\T^3)}\Bigg] \,\E\Big[\|\Grad \Pi \|_{L^{2\gamma^{'}_*}(\T^3)}\Big]\, \E\Big[\|\vc{v} \|_{L^{2\gamma^{'}_*}(\T^3)}\Big] \\
& \qquad \qquad \le \E\Bigg[ \Big\| \left< {\mathcal{V}^{\omega,\ep}_{t,x}}; \frac{ \varrho - 1}{\ep} \right>\Big\|_{L^{\gamma_*}(\T^3)} \Bigg] \,\E\Big[\| \vc{v} \otimes \vc{v} \|_{L^{2\gamma^{'}_*}(\T^3)} \Big] \,\E \Big[\|\vc{v} \|_{L^{2\gamma^{'}_*}(\T^3)}\Big] \le C,
\end{align*}
uniformly for $\ep \to 0$. In particular, the last two relevant terms on the right-hand side of (\ref{I9a}) and (\ref{I9b}) vanish for $\ep \to 0$. In view of this result, along with (\ref{I7a1}), (\ref{I7a2}), and (\ref{I7a22}), the proof of the theorem follows after taking expectation in \eqref{I7}, and applying Gronwall's lemma.
%%%%%%%%%%%%%%%%%%%%%%%%%%%%%%%%%%%%%%%%%%%%%%%%

\section*{Acknowledgements}
U.K. acknowledges the support of the Department of Atomic Energy,  Government of India, under project no.$12$-R$\&$D-TFR-$5.01$-$0520$, and India SERB Matrics grant MTR/$2017/000002$.

\end{document}